\newcommand{\email}[1]{\href{mailto:#1}{#1}}
\newcommand{\bb}{\boldsymbol{b}}
\newcommand{\bc}{\boldsymbol{c}}
\newcommand{\be}{\boldsymbol{e}}
\newcommand{\bef}{\boldsymbol{f}}
\newcommand{\bg}{\boldsymbol{g}}
\newcommand{\bq}{\boldsymbol{q}}
\newcommand{\bn}{\boldsymbol{n}}
\newcommand{\bu}{\boldsymbol{u}}
\newcommand{\bv}{\boldsymbol{v}}
\newcommand{\bw}{\boldsymbol{w}}
\newcommand{\bx}{\boldsymbol{x}}
\newcommand{\by}{\boldsymbol{y}}
\newcommand{\bz}{\boldsymbol{z}}
\newcommand{\bG}{\boldsymbol{G}}
\newcommand{\bI}{\boldsymbol{I}}
\newcommand{\bL}{\boldsymbol{L}}
\newcommand{\bR}{\boldsymbol{R}}
\newcommand{\bU}{\boldsymbol{U}}
\newcommand{\bH}{\boldsymbol{H}}
\newcommand{\bW}{\boldsymbol{W}}
\newcommand{\bzero}{\mathbf{0}}
\newcommand{\bbeta}{{\boldsymbol \eta}}
\newcommand{\bdelta}{\boldsymbol{\delta}}
\newcommand{\bGamma}{{\boldsymbol \Gamma}}
\newcommand{\bvs}{\boldsymbol{\mathfrak{v}}}
\newcommand{\bws}{\boldsymbol{\mathfrak{w}}}
\newcommand{\qs}{{\mathfrak{\phi}}}
\newcommand{\ps}{{\mathfrak{\psi}}}
\newcommand{\ts}{\boldsymbol{\mathfrak{\theta}}}
\newcommand{\xis}{\boldsymbol{\mathfrak{\xi}}}
\newcommand{\Poly}[1]{\mathcal{P}^{#1}}
\newcommand{\Polyd}[1]{\boldsymbol{\mathcal{P}}^{#1}}
\newcommand{\Goly}[1]{\boldsymbol{\mathcal{G}}^{#1}}
\newcommand{\tR}{\text{R}}
\DeclareMathOperator{\Id}{\mathrm{Id}}
\newcommand{\RTN}[1]{\boldsymbol{\mathcal{RT}}^{#1}}
\newcommand{\Mh}[1][h]{\mathcal{M}_{#1}}
\newcommand{\Th}[1][h]{\mathcal{T}_{#1}}
\newcommand{\Fh}[1][h]{\mathcal{F}_{#1}}
\newcommand{\Fhi}[1][h]{\mathcal{F}_{#1}^{{\rm i}}}
\newcommand{\Fhb}[1][h]{\mathcal{F}_{#1}^{{\rm b}}}
\newcommand{\Mhs}[1][h]{\mathfrak{M}_{#1}}
\newcommand{\Ths}[1][h]{\mathfrak{T}_{#1}}
\newcommand{\TTs}{\mathfrak{T}_T}
\newcommand{\Fhs}[1][h]{\mathfrak{F}_{#1}}
\newcommand{\Fhsi}[1][h]{\mathfrak{F}_{#1}^{{\rm i}}}
\newcommand{\normal}{\bn}
\newcommand{\uline}[1]{\underline{#1}}
\newcommand{\Real}{\mathbb R}
\newcommand{\GRAD}{\nabla}
\newcommand{\DIV}{\nabla\cdot}
\newcommand{\LAP}{\Delta}
\newcommand{\Ldeux}[1][\Omega]{{L}^2({#1})}
\newcommand{\Ldeuxd}[1][\Omega]{{\bL}^2({#1})}
\newcommand{\Linftyd}[1][\Omega]{{\bL}^{\infty}({#1})}
\newcommand{\Hun}[1][\Omega]{{H}^1({#1})}
\newcommand{\Hund}[1][\Omega]{{\bH}^1({#1})}
\newcommand{\Hunzd}[1][\Omega]{{\bH}^1_0({#1})}
\newcommand{\Hdiv}[1][\Omega]{{\boldsymbol{H}}_{\text{div}}({#1})}
\newcommand{\hhointerph}[1]{\hat{\uline{#1}}_h}
\newcommand{\hhointerpT}[1]{\hat{\uline{#1}}_T}
\newcommand{\hhointerpTT}[1]{\hat{{#1}}_T}
\newcommand{\norm}[2]{\|#2\|_{#1}}
\newcommand{\seminorm}[2]{|#2|_{#1}}
\newcommand{\calF}{{\mathcal F}}
\newcommand{\calL}{{\mathcal L}}
\newcommand{\calT}{{\mathcal T}}
\newcommand{\frakI}{{\mathfrak I}}
\newcommand{\frakT}{{\mathfrak T}}
\newcommand{\frakN}{{\mathfrak N}}
\newcommand{\frakR}{{\mathfrak R}}
\newcommand{\RecGT}[1]{{\frakR^{#1}_{\Goly{},\TTs}}}
\newcommand{\GammaGT}[1]{{\boldsymbol{\Gamma}_{\Goly{},\TTs}^{#1}}}
\newcommand{\GammacGT}[1]{{\boldsymbol{\Gamma}_{\Goly{},\TTs}^{{\rm c},#1}}}
\newcommand{\potOp}[1]{{\varrho_{\TTs}^{#1}}}
\newcommand{\bpi}{{\boldsymbol \pi}}
\newcommand{\btau}{{\boldsymbol \tau}}
\newcommand{\bphi}{{\boldsymbol \phi}}
\DeclareMathOperator{\card}{card}
\newcommand*\xbar[1]{%
  \hbox{%
    \vbox{%
      \hrule height 0.75pt 
      \kern0.5ex
      \hbox{%
        \kern-0.1em
        \ensuremath{#1}%
        \kern-0.1em
      }%
    }%
  }%
}
\newcommand{\tF}{t_{\rm F}}
\newcounter{corr}
\definecolor{violet}{rgb}{0.580,0.,0.827}
\newcommand{\corr}[3]{\typeout{Warning : a correction remains in page
    \thepage}
				\stepcounter{corr}        
				{\color{blue}\ifmmode\text{\,\protect\sout{\ensuremath{#1}}\,}\else\protect\sout{#1}\fi}
        {\color{red}#2}
        {\color{violet} #3}}
\newtheorem{theorem}{Theorem}
\newtheorem{lemma}[theorem]{Lemma}
\newtheorem{corollary}[theorem]{Corollary}
\theoremstyle{remark}
\newtheorem{remark}[theorem]{Remark}
\theoremstyle{definition}
\newcommand{\figpath}{./figures}
\begin{document}
\title{A Reynolds-semi-robust and pressure robust Hybrid High-Order method for the time dependent incompressible Navier--Stokes equations on general meshes}

\author[1]{Daniel Castanon Quiroz\footnote{\email{daniel.castanon@iimas.unam.mx}}}
\author[2]{Daniele A. Di Pietro\footnote{\email{daniele.di-pietro@umontpellier.fr}}}

\affil[1]{Instituto de Investigaciones en Matemáticas Aplicadas y en Sistemas, Universidad Nacional Autónoma de México, Circuito Escolar s/n, Ciudad Universitaria C.P. 04510 Cd. Mx. (México)
}
\affil[2]{IMAG, Univ Montpellier, CNRS, Montpellier 34090, France}
\maketitle

\begin{abstract}
	In this work we develop and analyze a Reynolds-semi-robust and pressure-robust Hybrid High-Order (HHO) discretization of the incompressible Navier--Stokes equations.
  Reynolds-semi-robustness refers to the fact that, under suitable regularity assumptions, the right-hand side of the velocity error estimate does not depend on the inverse of the viscosity.
  This property is obtained here through a penalty term which involves a subtle projection of the convective term on a subgrid space constructed element by element.
  The estimated convergence order for the $L^\infty(L^2)$- and $L^2(\text{energy})$-norm of the velocity is $h^{k+\frac12}$, which matches the best results for continuous and discontinuous Galerkin methods and corresponds to the one expected for HHO methods in convection-dominated regimes.
  Two-dimensional numerical results on a variety of polygonal meshes complete the exposition.
  \bigskip\\
  \textbf{Key words:} Hybrid High-Order methods, time-dependent incompressible flow, general meshes, Re-semi-robust error estimates, pressure robustness
  \medskip\\
  \textbf{MSC 2010:} 65N08, 65N30, 65N12, 35Q30, 76D05

\end{abstract}




\section{Introduction}

We consider the time-dependent incompressible  Navier--Stokes equations:
\begin{subequations}
  \label{eq:nstokes:strong}
  \begin{alignat}{2}
    \partial_t \bu - \nu \LAP \bu + (\bu \cdot \nabla )  \bu + \GRAD p  &= \bef &\quad& \text{in } (0,\tF]\times\Omega ,
      \label{eq:nstokes:strong:momentum}\\
      \DIV \bu&=0 &\quad& \text{in } (0,\tF]\times\Omega,\\
        \label{eq:nstokes:strong:mass}
        \bu & = \bzero &\quad& \text{on } [0,\tF]\times\partial \Omega, \\
        \bu(0,\cdot) &= \bu_0(\cdot)  &\quad& \text{in } \Omega,
  \end{alignat}
\end{subequations}
where 
$\Omega \subset \mathbb{R}^d$, for $d\in\{2,3\}$, denotes an open, bounded, simply connected polyhedral domain with Lipschitz boundary $\partial \Omega$ and $\tF > 0$ denotes the final time.
In addition,
$\bu : \lbrack 0, \tF\rbrack\times\Omega\rightarrow \mathbb{R}^d$ 
is the fluid velocity field, $p :(0, \tF\rbrack\times\Omega\rightarrow \mathbb{R}$ is the (zero-average) kinematic pressure,
$\bu_0 : \Omega\rightarrow \mathbb{R}^d$ represents a given initial condition,
and $\bef:(0, \tF\rbrack\times\Omega\rightarrow \mathbb{R}^d$ represents a given body force. The fluid is assumed to be Newtonian with constant kinematic  viscosity $\nu>0$.

In this work we propose a ``Reynolds-semi robust'' and  pressure-robust Hybrid-High Order (HHO) method on general meshes to approximate the solution
of the weak form of problem \eqref{eq:nstokes:strong}.
A numerical scheme is considered ``Reynolds-semi robust'' \cite{Schroeder.ea:2018} if its velocity error estimates are independent of the Reynolds  number (or $\nu^{-1}$). 
On the other hand, pressure robustness means that the velocity error estimates are independent of the pressure, a property whose relevance
was first emphasized in \cite{Linke:14,Volker.ea:2017}. Recently, a detailed analysis of vortex-dominated flows has been carried out in \cite{Gauger.ea:2019}, where the authors conclude that pressure robustness is a crucial prerequisite for the accurate  discretisation of non-trivial Navier--Stokes flows; numerical tests where pressure robust methods significantly outperform non-pressure robust methods for transient incompressible flows at high Reynolds numbers are provided, focusing on simplicial meshes.

Reynolds-semi robust numerical schemes have recently made the object of several works.
To obtain this kind of estimates, it is standard to assume additional regularity condition on $\nabla \bu$. For instance, in the analysis of a continuous interior penalty  finite element method \cite{Burman2007ContinuousIP}, assuming
 $\bu\in L^\infty(0,\tF;\bW^{1,\infty}(\Omega))$
a velocity error estimate in the
$L^\infty(0,\tF;\bL^2(\Omega))$-norm was obtained.
Other Reynolds-semi robust numerical schemes using simplicial meshes
have been proposed \cite{Arndt.ea:2015,Beirao-da-Veiga.ea.SUPG:2023,DallmannArndt:2016,DeFrutos.ea.2018,DeFrutos.ea:2018.2,garciaarchilla2024pressure,HanHou:2021}. In particular, we refer to  \cite{Schroeder.ea:2018} for an outstanding review and further insight into the importance of Reynolds-semi robust  numerical methods. At the time of the writing of this work, the best known velocity error estimate   in the ${L^\infty(0,\tF;\bL^2(\Omega))}$-norm is of order $h^{k+\frac{1}{2}}$ (where $k$ denotes the order of the polynomial approximation and $h$ is the mesh size), for instance  see the works of \cite{Beirao-da-Veiga.ea.SUPG:2023,garciaarchilla2024pressure,HanHou:2021}, where Reynolds-semi robust numerical methods are proposed which additionally satisfy the  pressure robustness  property.

In recent years, the mathematical community has become interested in developing numerical schemes that can make use of
general polygonal and polyhedral meshes, as opposed to more standard triangular/quadrilateral (tetrahedral/hexahedral) meshes.
A representative but by far non exhaustive sample concerning incompressible flow problems includes \cite{Di-Pietro.Ern:10,Di-Pietro.Ern:12,Di-Pietro.Krell:18,Gatica.Munar.ea:18,Botti.Di-Pietro.ea:19,Beirao-da-Veiga.Lovadina.ea:18,Beirao-da-Veiga.Dassi.ea:20,Zhang.Zhao.ea:21};
see also \cite{Botti.Castanon-Quiroz.ea:21,Castanon-Quiroz.Di-Pietro.ea:21} concerning non-Newtonian fluids and \cite{antonietti2022virtual} concerning the coupling with the heat equation.
Regarding pressure robust methods on general meshes for the Stokes and Navier--Stokes equations, some work has recently been 
done using the Virtual Element method, generalized barycentric
coordinates, the staggered Discontinuous Garlekin method, HHO methods, and Discrete de Rham  methods; see, e.g., \cite{Castanon-Quiroz.Di-Pietro:23, Liu.Harper.ea:20,Wang.Mu.ea:21,Zhao.Park.ea:22,Frerichs.Merdon:20, Kim.Zhao.ea:21,Botti.Massa:2022,Di-Pietro.Droniou:21, Beirao-da-Veiga.Dassi.ea:22,Di-Pietro.Droniou.ea:24}. Pressure-robust HHO methods for the Stokes and Navier–Stokes problem have been proposed in \cite{Di-Pietro.Ern.ea:16,Castanon-Quiroz.Di-Pietro:20,Castanon-Quiroz.Di-Pietro:23}; see also \cite{Botti.Botti.ea:24} for variants with hybrid pressure. 
To the authors' knowledge, the development of Reynolds-semi robust and pressure-robust numerical schemes  on general meshes has not yet been adressed. 
The present work fills this gap.
To achieve this, the proposed method uses a  slight variation of the divergence-preserving velocity reconstruction proposed by the same authors in \cite{Castanon-Quiroz.Di-Pietro:23}.
Contrary to \cite{Castanon-Quiroz.Di-Pietro:23}, however, we use here the convective form of the nonlinear term in the equation \eqref{eq:nstokes:strong:momentum} in the spirit of \cite{HanHou:2021,Schroeder.ea:2018}, where {$\boldsymbol{H}_{\text{div}}$-conforming} spaces are used.
Reynolds-semi robustness is obtained through a
new  term which penalizes the  jumps of a potential operator within a decomposition of some discrete piecewise polinomial spaces (cf. \eqref{eq:kz.decomp} and \eqref{def:golyk} below).
With these ingredients, we prove a velocity error estimate in the ${L^\infty(0,\tF;\bL^2(\Omega))}$-norm  of order $h^{k+\frac{1}{2}}$, which equals the best known velocity error estimate on simplicial meshes and corresponds to the typical order of convergence of HHO methods in convectoin-dominated regimes \cite{Di-Pietro.Droniou.ea:15}.

The rest of the paper is organised as follows.
In Section \ref{subsec:weak.NST} we introduce some notations and present the weak form of problem \eqref{eq:nstokes:strong}.
In Section \ref{sec:setting} we present the discrete setting.
In Section \ref{sec:hho-scheme}, which contains the statement of the discrete problem, particular emphasis is put on a novel scalar potential reconstruction that is used to stabilize the convective term and achieve Reynolds semi-robustness.
Section \ref{sec:convergence.analysis.vel} contains the velocity error analysis and the main results.
Finally, in Section \ref{sec:ntest1} we present numerical experiments to verify our theoretical results.

\section{Continuous setting}\label{subsec:weak.NST}

Throughout the paper, given an open bounded set $D\subset\mathbb{R}^d$ as well as two integers $m \geq 0$ and $p \geq 1$, we use the Sobolev space $W^{m,p}(D)$ for scalar-valued functions with associated norm $\norm{W^{m,p}(D)}{{\cdot}}$ and seminorm $\seminorm{W^{m,p}(D)}{{\cdot}}$.
Spaces of vector- and tensor-valued functions are indicated with bold letters.
In the case $m = 0$, we obtain the Lebesgue space $L^p(D) \coloneq W^{0,p}(D)$ and, when
$p = 2$, the Hilbert space $H^m(D) \coloneq W^{m,2}(D)$.
Additionally, the closed subspaces $H_0^1(D)$ consisting of $H^1 (D)$-functions with vanishing trace on $\partial D$,
and
the set $L_0^2 (D)$ of $L^2(D)$-functions with zero average in $D$ are used in what follows.
Given a Banach space $X$  and a real number $t > 0$,
we denote by $L^p(0,t;X)$, $p \in [1,\infty]$, the classical Bochner space.
In the case $t = \tF$, we often use the abbreviation
$L^p(X)\coloneq L^p(0,\tF;X)$.

Letting $\bU \coloneq \Hunzd$ and $P\coloneq L_0^2(\Omega)$, we consider the following weak form of problem \eqref{eq:nstokes:strong}:
Find  $\bu: [0,\tF] \rightarrow \bU$  and $p : (0,\tF\rbrack \to P$
with $\bu(0)=\bu_0\in \bU$, such that it holds, for all $(\bv,q) \in \bU\times P$ and almost every $t \in (0,\tF)$,
\begin{equation}
  \label{eq:nstokes:weak}
  ({d_t} \bu(t), \bv)
  + \nu a(\bu(t),\bv)
  + t(\bu(t),\bu(t),\bv)
  + b(\bv,p(t))
  - b(\bu(t),q)
  = \ell(\bef(t),\bv),
\end{equation}
with $(\cdot,\cdot)$ denoting the standard $\bL^2(\Omega)$-product, while the bilinear forms $a:\bU\times\bU\to\Real$, $b:\bU\times P\to\Real$, and $\ell:\Ldeuxd\times\bU\to\Real$ are defined by
\[ 
a(\bw,\bv)\coloneq  \int_\Omega \nabla \bw :  \nabla \bv,
\quad b(\bv,q)\coloneq - \int_\Omega (\DIV \bv) q,
\quad \ell(\bef,\bv)\coloneq\int_\Omega\bef\cdot\bv,
\] 
and the trilinear form $t:\bU\times\bU\times\bU\to\Real$ is such that
\[ 
t(\bw,\bv,\bz)\coloneq  \int_\Omega ((\bu \cdot \GRAD) \bv) \cdot \bz.
\] 


\section{Discrete setting}\label{sec:setting}

\subsection{Mesh}\label{sec:setting:mesh}

In what follows, for the sake of simplicity, we will systematically use the term polyhedral instead of polygonal and face instead of edge also when $d=2$.

Following \cite[Definition 1.4]{Di-Pietro.Droniou:20}, we consider a polyhedral mesh defined as a couple $\Mh\coloneq(\Th,\Fh)$,
where $\Th$ is a finite collection of polyhedral elements, while $\Fh$ is a finite collection of planar faces.
We assume that every  element $T\in\Th$ is 
star-shaped with respect to a ball \cite[Remark 2.5]{Castanon-Quiroz.Di-Pietro:23}.
For any mesh element or face $X\in\Th\cup\Fh$, we denote by $|X|$ its Hausdorff measure and by $h_X$ its diameter, so that the meshsize satisfies $h = \max_{T\in\Th}h_T$.
Boundary faces lying on $\partial\Omega$ and internal faces contained in
$\Omega$ are collected in the sets $\Fhb$ and $\Fhi$, respectively.
For each mesh element $T\in\Th$, we denote by $\Fh[T]$ the set collecting the faces that lie on the boundary $\partial T$ of $T$ and, for all $F\in\Fh[T]$, we denote by $\normal_{TF}$ the (constant) unit vector normal to $F$ and pointing out of $T$.

It is assumed that $\Mh$ belongs to a regular mesh sequence $(\Mh)_h$ in the sense of \cite[Definition 1.9]{Di-Pietro.Droniou:20}.
This assumption entails the existence of a matching simplicial submesh $\Mhs\coloneq(\Ths,\Fhs)$ of $\Mh$ with the following properties:
$\Ths$ is a finite collection of simplicial elements;
for any simplex $\tau \in \Ths$, there is a unique mesh element $T \in \Th$ such that $\tau \subset T$;
for any simplicial face $\sigma \in \Fhs$ and any mesh face $F \in \Fh$ , either $\sigma \cap F = \emptyset $ or $\sigma \subset F$.
  We additionally assume that, for any element $T\in\Th$, its submesh is constructed in such way that all simplices contained in $T$ and collected in the set $\TTs$ (see Figure \ref{fig:simplices.faces.T.a})  have at least one common vertex  ${\bx}_T$.
This assumption is directly used in Lemmas \ref{lemma:RT.lifting} and \ref{lem:rhoOp} below.
Regarding the Lemma \ref{lemma:RT.lifting}, we refer to \cite[Remarks 2.1  and A.1]{Castanon-Quiroz.Di-Pietro:23} for further insight into this assumption.

We decompose the set of simplicial faces as $\Fhs= \Fhs^{\rm i} \cup \Fhs^{\rm b}$  where 
$\Fhs^{\rm i}$ and $\Fhs^{\rm b}$ respectivaly collect interior and boundary simplicial faces.
For any  $T\in \Th$, we define  $\Fhsi[T]$ as the set of simplicial faces of $\Fhs$ that lie in the interior of $T$.
For any face $F \in \Fh$ lying on the boundary of $T \in \Th$, $\Fhs[F]$ denotes the set of simplicial faces $\sigma$ for which $\sigma \subset F$, and we let $\bn_\sigma\coloneq\bn_{TF}$, and $\bn_{\tau\sigma}\coloneq\bn_{\sigma}$ for the unique element $\tau\in \TTs$, which contains $\sigma$. 
Additional notations for mesh elements and faces are introduced at the beginning of Section \ref{sec:discrete.problem:convective.term} below and illustrated in Figure \ref{fig:simplices.faces.T.b}.
For future use, we notice that, by \cite[Lemma 1.12]{Di-Pietro.Droniou:20}, mesh regularity implies the existence of an integer $N\geq 0$ depending only on the mesh regularity parameter such that
\begin{align} \max_h \max_{T\in\Th} \card(\TTs) \leq N \qquad \text{and}\qquad \max_h \max_{T\in\Th} \card(\Fh[T]) \leq N.  \label{ineq:card.IT.F} \end{align} \begin{figure}[ht]
  \begin{minipage}[t]{.5\textwidth}
    \centering
    \def\svgwidth{.9\columnwidth}
    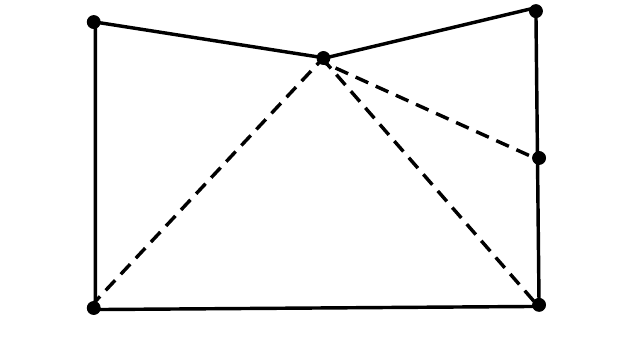
    \subcaption{The elements of $\TTs$ and $\Fh[T]$.}
    \label{fig:simplices.faces.T.a}
  \end{minipage}
  \begin{minipage}[t]{.5\textwidth}
    \centering
    \def\svgwidth{0.94\columnwidth}
    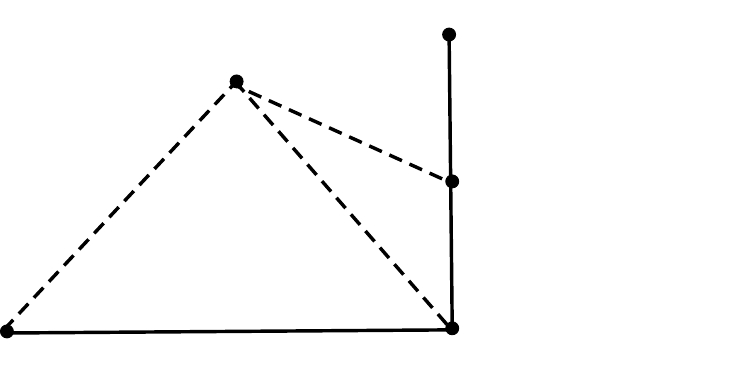
    \subcaption{A closer look to the right part: The simplicial faces $\sigma_1,\sigma_3,\sigma_5$ belong to the set of interior faces $\Fhsi[T]$ and we have $\sigma_2=F_1,\sigma_4=F_2,\sigma_6=F_3$,  and $\sigma_7=F_4$.}
\label{fig:simplices.faces.T.b}
  \end{minipage}
  \caption{An illustration of the sets $\TTs, \Fh[T]$ and $\Fhsi[T]$ for a given element $T\in \Th$ in $\Real^2$.}
  \label{fig:simplices.faces.T}
\end{figure}

In order to prevent the proliferation of generic constants we write, whenever possible, $a\lesssim b$ in place of $a\le Cb$ with $C>0$ independent of $\nu,h$ and, for local inequalities, also on the mesh element or face.
The dependencies of the hidden constant will be further specified when relevant. Moreover, we write
$a  \simeq b$, when both $a \lesssim b$  and $b \lesssim a$ hold.

\subsection{Local and broken spaces and projectors}

Let $X$ denote a mesh element or face and, for an integer $l\geq 0$, denote by
$\Poly{l}(X)$  the space spanned by the restrictions to $X$
of polynomials in the space variables of total degree $\leq l$.
The $L^2$-orthogonal projector $\pi_X^l:L^1(X)\rightarrow \Poly{l}(X)$ is such that, for all $\zeta\in L^1(X)$,
\[
\int_X (\zeta-\pi_X^l \zeta)w=0 \qquad \forall w \in \Poly{l}(X).
\]
Vector and matrix versions of the $L^2$-orthogonal projector are obtained by applying $\pi_X^l$ component-wise, and are both denoted with the bold symbol $\bpi_X^l$ in what follows.
Optimal approximation properties for the $L^2$-orthogonal projector are proved in \cite[Appendix A.2]{Di-Pietro.Droniou:17}; see also \cite[Chapter 1]{Di-Pietro.Droniou:20}, where these estimates are extended to non-star shaped elements.
Specifically, let $s\in\{0,\dots,l+1\}$ and $r \in [1,+\infty]$.
Then, it holds, with hidden constant only depending on $l$, $s$, $r$, and the mesh regularity parameter:
For all $T\in {\calT_h}$, all $\zeta\in W^{s,r}(T)$, and all $m\in\{0,\dots,s\}$,
\begin{equation}
  |\zeta - \pi_T^l \zeta |_{W^{m,r}(T)} \lesssim h_T^{s-m}|\zeta|_{W^{s,r}(T)},\label{eq:l2proj:error:cell}
\end{equation}
and, if $s \geq 1$,
\begin{equation}
  h_T^{\frac{1}{r}}%
  \| \zeta - \pi_T^l \zeta \|_{L^r(\partial T)}
  \lesssim h_T^{s}|\zeta|_{W^{s,r}(T)}.\label{eq:l2proj:error:faces}
\end{equation}

At the global level, the space of broken polynomial functions on $\calT_h$ of total degree $\leq l$ is denoted by $\Poly{l}(\calT_h)$, and $\pi_h^l$ is the corresponding $L^2$-orthogonal projector such that, for all $\zeta \in L^1(\Omega)$, $(\pi_h^l \zeta)_{|T}\coloneq\pi_T^l \zeta_{|T}$ for all $T\in\calT_h$.
Regularity requirements in error estimates will be expressed  in terms of the broken Sobolev spaces $W^{s,r}(\calT_h)$, spanned by functions in $L^r(\Omega)$ the restriction of which to every $T\in\Th$ is in $W^{s,r}(T)$.
We additionally set, as usual, $H^s(\calT_h)\coloneq W^{s,2}(\calT_h)$.

\subsection{Discrete spaces and norms}\label{sec:discspaces}

Let a polynomial degree $k\geq 0$ be fixed and set
\[ 
  {k}^\star \coloneq
  \begin{cases}
    k   & \text{if $k\in\{0,1\}$,}
    \\
    k+1 & \text{otherwise.}
  \end{cases}
  \] 
We then define a variant of the usual HHO space  as follows
\begin{multline}\label{def:HHO.space}
  \uline{\bU}_h^k\coloneq
 \big\{
 \uline{\bv}_h=((\bv_T)_{T\in \calT_h},(\bv_F)_{F\in \calF_h}) :
 \\
  \mbox{$\bv_T \in \Polyd{{k}^\star}(T)$ for all $T \in \calT_h$ and
	$\bv_F \in \Polyd{k}(F)$ for all $F \in \calF_h$}
  \big\}.  
\end{multline}
The restrictions of $\uline{\bU}_h^k$ and $\uline{\bv}_h \in \uline{\bU}_h^k$ to a generic mesh element $T\in \calT_h$ are respectively denoted by $\uline{\bU}_T^k$ and $\uline{\bv}_T=(\bv_T,(\bv_F)_{F\in \calF_T})$.
The vector of polynomials corresponding to a smooth function over $\Omega$ is obtained via the global interpolation operator $\uline{\bI}_h^k: \Hund \rightarrow \uline{\bU}_h^k$ such that, for all $\bv \in \Hund$,
\[ 
  \uline{\bI}_h^k\bv \coloneq  ((\bpi_T^{k^\star}\bv_{|T})_{T\in\calT_h},(\bpi_F^k\bv_{|F})_{F\in \calF_h}).
\] 
Its restriction to a generic mesh element $T \in \calT_h$, collecting the components on $T$ and its faces, is denoted by $\uline{\bI}_T^k$.
We furnish $\uline{\bU}_{h}^k$ with the discrete $H^1$-like seminorm such that, for all
$\uline{\bv}_h \in \uline{\bU}_h^k$,
\begin{equation} \label{eq:norm.1h}
  \|\underline{\bv}_h\|_{1,h}\coloneq\left(
  \sum_{T \in \calT_h} \|\underline{\bv}_T\|_{1,T}^2
  \right)^{\nicefrac12},
\end{equation}
where, for all $T\in\calT_h$,
\begin{equation}\label{eq:norm.1T}
  \|\underline{\bv}_T\|_{1,T}\coloneq\left( 
  \|\nabla {\bv}_T\|_{\bL^2(T)}^2 + |\uline{\bv}_T|_{1,\partial T}^2
  \right)^{\nicefrac12}
  \mbox{ with }
  |\uline{\bv}_T|_{1,\partial T}\coloneq\left(
	\sum_{F\in \calF_T}\! h_F^{-1}\| \bv_F-\bv_T\|^2_{\Ldeuxd[F]}
  \right)^{\nicefrac12}.
\end{equation}

The discrete spaces for the velocity and the pressure, respectively accounting for the wall boundary condition and the zero-average condition, are 
\begin{equation*}
  \uline{\bU}_{h,0}^k
  \coloneq
  \left\{
  \uline{\bv}_h = ((\bv_T)_{T\in\calT_h},(\bv_F)_{F\in\calF_h})\in  \uline{\bU}_h^k : \bv_F = \boldsymbol{0} \quad \forall F\in\calF_h^{\rm b}
  \right\},\qquad
  P_h^k \coloneq \Poly{k}({\cal {T}}_h)\cap P.
\end{equation*}
For all  $\uline{\bv}_h \in \uline{\bU}_h^k$, we denote by ${\bv}_h \in \Polyd{k^\star}(\calT_h)$ the vector-valued broken polynomial function obtained patching element-based unkowns, that is $(\bv_h)_{|T}\coloneq \bv_T$ for all $T \in \calT_h$.
The following discrete Sobolev embeddings in $\uline{\bU}_{h,0}^k$ have been proved in \cite[Proposition 5.4]{Di-Pietro.Droniou:17} for the standard HHO space (the modifications required to treat the variation considered here are straightforward):
For all $r\in[1,6]$ it holds, for all  $\uline{\bv}_h \in \uline{\bU}_{h,0}^k$,
\begin{align}
  \|{\bv}_h\|_{\bL^r(\Omega)}
  \lesssim
  \|\uline{\bv}_h\|_{1,h}.
  \label{eq:disc:sobembd}
\end{align}
where the hidden constant is independent of both $h$ and $\uline{\bv}_h$, but possibly depends on $\Omega$, $k$, $r$, and the mesh regularity parameter.
It follows from \eqref{eq:disc:sobembd} that the map $\| {\cdot} \|_{1,h}$ defines a norm on $\uline{\bU}_{h,0}^k$.
Classically, the corresponding dual norm of a linear form $\calL _h:\uline{\bU}_{h,0}^k\to\Real$ is given by 
\begin{equation}\label{eq:dual.norm}
  \|\mathcal{L}_h\|_{1,h,*}
  \coloneq\sup_{\uline{\bv}_h\in\uline{\bU}_{h,0}^k,\|\uline{\bv}_h\|_{1,h}=1}\left|
  \mathcal{L}_h(\uline{\bv}_h)
  \right|.
\end{equation}


\section{A pressure-robust and Reynolds semi-robust HHO scheme}\label{sec:hho-scheme}

\subsection{Divergence-preserving local velocity reconstruction}\label{sec:discrete.setting:velocity.reconstruction}

Following  \cite{Di-Pietro.Ern:15}, for any element $T \in \calT_h$ we define the discrete divergence operator $D_T^k:\uline{\bU}_T^k \rightarrow \Poly{k}(T)$ such that, for all $\uline{\bv}_T \in \uline{\bU}_T^k$ and all $q \in \Poly{k}(T)$,
\begin{align}\label{eq:hho:div:op}
  \int_T D_T^k\uline{\bv}_T\, q
  &= - \int_T \bv_T\cdot \GRAD q + \sum_{F\in \calF_T} \int_F (\bv_F \cdot \bn_{TF})\, q.
\end{align}
The operator $D_T^k$ satisfies the following crucial commutation property (see \cite[Eq. (8.21)]{Di-Pietro.Droniou:20}):
\begin{equation}
  D_T^k\uline{\bI}_T^k \bv  = \pi_T^k(\DIV \bv) \qquad \forall \bv \in \Hund[T].
  \label{eq:DT.commuting}
\end{equation}

To achieve pressure robustness, we proceed similarly to \cite[Section 2.4]{Castanon-Quiroz.Di-Pietro:23}, constructing divergence-preserving velocity test functions which are then used for the discretization of the body force and the nonlinear term.
Let an element $T \in \calT_h$ be fixed and, for $\tau \in \TTs$, denote by 
$\RTN{k}(\tau)$ the classical Raviart--Thomas--N\'ed\'elec space of degree $k$ on $\tau$ \cite{Raviart.Thomas:77,Nedelec:80}.
We recall that a function in $\RTN{k}(\tau)$ is uniquely determined by its polynomial moments of degree up to $(k-1)$ inside $\tau$ and the polynomial moments of degree $k$ of its normal components on the simplicial faces of $\tau$, collected in the set $\Fhs[\tau]$.
We introduce the Raviart--Thomas--N\'ed\'elec space of degree $k$ on the matching simplicial submesh $\TTs$ of $T$ defined as follows:
\begin{equation*}
  \RTN{k}(\TTs)\coloneq\left\{
  \bws \in \Hdiv[T] : \text{%
    $\bws_{|\tau}\in \RTN{k}(\tau)$ for all $\tau \in \TTs$
  }
  \right\},
\end{equation*}
where $\Hdiv[T]\coloneq\left\{\bws \in \Ldeuxd[T]:  \DIV \bws \in \Ldeux[T]\right\}$.

Recalling from Section \ref{sec:setting:mesh} that, for a given element $T\in\Th$,  we denote by $\bx_T$ the common vertex of all simplices in $\TTs$, we introduce the following spaces generated by the Koszul operator (see \cite[Section 7.2]{Arnold:18}):
\begin{equation}\label{def:golyck}
  \begin{alignedat}{2}
    \Goly{{\rm c},k}(T)\coloneq (\bx-\bx_T) \perp \Poly{k-1}(T),
    &\quad 
    \Goly{{\rm c},k}(\TTs)\coloneq (\bx-\bx_T) \perp \Poly{k-1}(\TTs)
    & \quad \text{for } k\geq1, d=2,\\
    \Goly{{\rm c},k}(T)\coloneq (\bx-\bx_T) \times \Polyd{k-1}(T),
    &\quad 
    \Goly{{\rm c},k}(\TTs)\coloneq (\bx-\bx_T) \times \Polyd{k-1}(\TTs)
    & \quad \text{for } k\geq1, d=3,
  \end{alignedat}
\end{equation}
where $\by\perp \alpha \coloneq\alpha[-y_2, y_1]^T\in \mathbb{R}^2$ for $\by \in \mathbb{R}^2$ and  a scalar $\alpha$, and  $\times$ is the usual cross product in $\mathbb{R}^3$.
In addition, we define $\Goly{{\rm c},-1}(T)\coloneq\Goly{{\rm c},0}(T)\coloneq\Goly{{\rm c},-1}(\TTs)\coloneq\Goly{{\rm c},0}(\TTs)\coloneq\{ \bzero \}$.
Observe that we have the following direct
decompositions (see \cite[Corollary 7.4]{Arnold:18}):
\begin{equation}\label{eq:kz.decomp}
  \Polyd{k}(T)= \Goly{k}(T) \oplus \Goly{{\rm c},k}(T)
  \qquad\text{and}\qquad
  \Polyd{k}( \TTs)=\Goly{k}(\TTs) \oplus \Goly{{\rm c},k}(\TTs),
\end{equation}
where $\Polyd{k}(\TTs)$ is the broken polynomial spaces of total degree $\le k$ on $\TTs$ and
\begin{equation}\label{def:golyk}
  \Goly{k}(T)\coloneq\GRAD \Poly{k+1}(T)
  \qquad\text{and}\qquad
  \Goly{k}( \TTs)\coloneq\GRAD \Poly{k+1}(\TTs),
\end{equation}
and the direct sums in \eqref{eq:kz.decomp} are not necessarily orthogonal.
Notice that, with a little abuse of notation, in the definition \eqref{def:golyk} of $\Goly{k}( \TTs )$ we have used the symbol $\GRAD$ for the piecewise gradient on $\TTs$.
This abuse of notation will be kept throughout the rest of the paper.

We denote the $L^2$-orthogonal projectors onto the spaces $\Goly{k}(T), \Goly{k}(\TTs),\Goly{{\rm c},k}(T)$, and $\Goly{{\rm c},k}(\TTs)$ by $\bpi^{k}_{\Goly{},T}, \bpi^{k}_{\Goly{},\TTs}, \bpi^{{\rm c},k}_{\Goly{},T}$ and $\bpi^{{\rm c},k}_{\Goly{}, \TTs}$, respectively.
Then the \emph{divergence-preserving velocity reconstruction} ${\bR}_{T}^k: \uline{\bU}_T^k \rightarrow \RTN{k}(\TTs)$ is defined, for all $\uline{\bv}_T \in \uline{\bU}_T^k$, as the first component of the solution of the following mixed problem: 
Find  $({\bR}_{T}^k\uline{\bv}_T,\ps,{\ts})\in \RTN{k}(\TTs)\times \Poly{k}(\TTs){\times\Goly{{\rm c},k-1}(\TTs)}$ such that
\begin{subequations}
  \label{eq:darcyT:weak}
  \begin{alignat}{2}
    ({\bR}_{T}^k\uline{\bv}_T)_{|\sigma} \cdot \bn_{\sigma}&= (\bv_F\cdot \bn_{TF})_{|\sigma}
    &\qquad&
    \forall \sigma  \in \Fhs[F],\,  \forall F \in \Fh[T],
    \label{eq:darcyT:weak:bd}\\
    \int_{T} (\DIV {\bR}_{T}^k\uline{\bv}_T)\, \qs &= \int_{T} (D_T^k\uline{\bv}_T)\, \qs
    &\qquad&
    \forall \qs \in \Poly{k}(\TTs),
    \label{eq:darcyT:weak:a}\\
          { \int_{T}  {\bR}_{T}^k\uline{\bv}_T \cdot \xis} &= 
          {  \int_{T}\bv_T\cdot \xis}
          &\qquad&
          {
            \forall \xis \in \Goly{{\rm c},k-1}(\TTs)},
          \label{eq:darcyT:weak:ab}\\
          \int_{T}  {\bR}_{T}^k\uline{\bv}_T \cdot \bws  +  \int_{T} (\DIV \bws )\ps 
              { + \int_{T}  {\bws} \cdot \ts}
              &= \int_{T} \bv_T \cdot \bws
              &\qquad&
              \forall \bws \in\RTN{k}_0(\TTs),
              \label{eq:darcyT:weak:b}
  \end{alignat}
\end{subequations}
with $\RTN{k}_0(\TTs)$ denoting the subspace of $\RTN{k}(\TTs)$ spanned by functions whose normal component vanishes on $\partial T$.

\begin{remark}[Comparison with the $\Hdiv$-conforming reconstruction of \cite{Castanon-Quiroz.Di-Pietro:23}]\label{rem:bRTk}
  The above definition of the operator ${\bR}_{T}^k$ is a variant of  the one originally proposed in \cite[Section 2.4]{Castanon-Quiroz.Di-Pietro:23}.
  Specifically, there are two differences:
  the first one consists in changing the last component of the trial and test spaces from  $\Goly{{\rm c},k-1}(T)$ to $\Goly{{\rm c},k-1}(\TTs)$;
  the second one is the use of the HHO space variant defined in \eqref{def:HHO.space}.
\end{remark}
\begin{lemma}[Properties of ${\bR}_{T}^k$]
  \label{lemm:rtn}
  For all $T \in \Th$, it holds:
  \begin{enumerate}[(i)]
  \item \emph{Well-posedness.}
    For a given $\uline{\bv}_T \in \uline{\bU}_T^k $, there exists a unique ${\bR}_{T}^k\uline{\bv}_T \in \RTN{k}(\TTs)$ that solves \eqref{eq:darcyT:weak} and satisfies
    \begin{align}
      \norm{\Ldeuxd[T]}{\bv_T - {\bR}_{T}^k\uline{\bv}_T  } \lesssim h_T   \norm{1, T}{\uline{\bv}_T}.
      \label{ineq:rtn:bound}
    \end{align}
  \item \emph{Approximation in $\bW^{s,p}$.} Let an integer $p\in[1,\infty]$ be given.
    Then, for all $s \in \{1,\dots,k+1\}$, $m \in \{0,1\}$, and all $\bv\in \bW^{s,p}(T)$, it holds
    \begin{align} 
      \seminorm{\bW^{m,p}(\TTs)}{\bv  - {\bR}_{T}^k(\uline{\bI}_T^k\bv)  } \lesssim h_T^{s-m}   \seminorm{\bW^{s,p}(T)}{\bv}.
      \label{ineq:rtn:approx.Wsp}
    \end{align}
  \item \emph{Consistency.} For all {$k \ge 1$} and all $\uline{\bv}_T \in \uline{\bU}_T^k$, it holds 
    \begin{align} 
      \bpi^{k-1}_T({\bR}_{T}^k\uline{\bv}_T)= \bpi^{k-1}_T(\bv_T).  \label{ineq:rtn:consis} 
    \end{align} 
  \end{enumerate}
\end{lemma}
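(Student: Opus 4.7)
The plan is to address the three claims in order, adapting the arguments of \cite[Section~2.4]{Castanon-Quiroz.Di-Pietro:23} to accommodate the two modifications listed in Remark~\ref{rem:bRTk}. For the well-posedness and estimate in (i), I would recast \eqref{eq:darcyT:weak} as a standard mixed problem. The normal-trace condition \eqref{eq:darcyT:weak:bd} reduces the unknown to a fixed RT lift of the face data plus an element of $\RTN{k}_0(\TTs)$; the residual system on $\RTN{k}_0(\TTs)\times\Poly{k}(\TTs)\times\Goly{{\rm c},k-1}(\TTs)$ has the $L^2$-inner product as a coercive primal form, with dual pairing $(\bws,(\qs,\xis))\mapsto \int_T(\DIV\bws)\qs + \int_T\bws\cdot\xis$. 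Well-posedness then follows from an inf-sup condition on this pairing, which I would establish from the surjectivity of the divergence on $\RTN{k}_0(\TTs)$ together with the richness of the interior RT moments on each simplex of $\TTs$ (which span $\Polyd{k-1}(\TTs)\supset\Goly{{\rm c},k-1}(\TTs)$). For the bound \eqref{ineq:rtn:bound}, I would test \eqref{eq:darcyT:weak:b} with the difference of $\bR_T^k\uline{\bv}_T$ and a suitable RT interpolant of $\bv_T$ sharing its normal trace, eliminate the multipliers via \eqref{eq:darcyT:weak:a}--\eqref{eq:darcyT:weak:ab}, and control the resulting boundary contributions by the jump quantity $\sum_F h_F^{-1}\|\bv_F-\bv_T\|^2_{\Ldeuxd[F]}$ embedded in $\|\uline{\bv}_T\|_{1,T}$, with the factor $h_T$ emerging from a discrete trace inequality.

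For the approximation result (ii), I would first show the polynomial invariance $\bR_T^k(\uline{\bI}_T^k\bv)=\bv$ for all $\bv\in\Polyd{k}(T)$: since $\Polyd{k}(T)\subset\RTN{k}(\TTs)$ and $\bpi_T^{k^\star}\bv=\bv$, the function $\bv$ itself satisfies \eqref{eq:darcyT:weak:bd}, \eqref{eq:darcyT:weak:ab}, and, via the commuting property \eqref{eq:DT.commuting}, \eqref{eq:darcyT:weak:a}, while \eqref{eq:darcyT:weak:b} is solved with $\ps=0$, $\ts=0$. Combining this invariance with the stability from (i), writing
\[
  \bv - \bR_T^k(\uline{\bI}_T^k\bv) = (\bv - P\bv) - \bR_T^k\bigl(\uline{\bI}_T^k(\bv - P\bv)\bigr)
\]
for an averaged Taylor polynomial $P\bv\in\Polyd{k}(T)$, and invoking \eqref{eq:l2proj:error:cell}--\eqref{eq:l2proj:error:faces} together with local inverse inequalities on the simplices of $\TTs$ to translate the $L^2$ bound into a $W^{m,p}$ bound, yields \eqref{ineq:rtn:approx.Wsp}.

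For the consistency (iii), given $\bw\in\Polyd{k-1}(T)$ I would use the decomposition \eqref{eq:kz.decomp} to write $\bw=\GRAD q + \bzeta$ with $q\in\Poly{k}(T)$ and $\bzeta\in\Goly{{\rm c},k-1}(T)\subset\Goly{{\rm c},k-1}(\TTs)$. The Koszul part is handled directly by \eqref{eq:darcyT:weak:ab}. For the gradient part, since $\bR_T^k\uline{\bv}_T\in\Hdiv[T]$, integration by parts gives
\[
  \int_T\bR_T^k\uline{\bv}_T\cdot\GRAD q
  = -\int_T(\DIV\bR_T^k\uline{\bv}_T)\,q + \int_{\partial T}(\bR_T^k\uline{\bv}_T\cdot\bn)\,q,
\]
and then \eqref{eq:darcyT:weak:a} (applicable since $\Poly{k}(T)\subset\Poly{k}(\TTs)$), the definition \eqref{eq:hho:div:op} of $D_T^k$, and the boundary identity \eqref{eq:darcyT:weak:bd} make all face contributions cancel, leaving $\int_T\bv_T\cdot\GRAD q$.

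The main obstacle I anticipate is the inf-sup verification in (i) with the broken Koszul test space $\Goly{{\rm c},k-1}(\TTs)$ in place of $\Goly{{\rm c},k-1}(T)$: the test function must simultaneously realize a prescribed piecewise divergence and a Koszul component that is no longer smooth on $T$. Fortunately, since the interior RT moments reach all of $\Polyd{k-1}(\TTs)$ simplex by simplex, the construction should proceed in the same spirit as in \cite{Castanon-Quiroz.Di-Pietro:23}.
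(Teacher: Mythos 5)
Your proposal follows essentially the same route as the paper's proof: well-posedness by reduction of \eqref{eq:darcyT:weak} to a mixed problem (which the paper, like you, ultimately defers to the construction of \cite{Castanon-Quiroz.Di-Pietro:23}); the $\bW^{s,p}$ estimate from polynomial invariance of $\bR_T^k\circ\uline{\bI}_T^k$ plus boundedness (the paper packages this as an application of \cite[Theorem~1.43]{Di-Pietro.Droniou:20}, proving $\bL^p$- and $\bW^{1,p}$-boundedness via the discrete Lebesgue embeddings \eqref{eq:rev:disc.emb.pw} applied to the piecewise-polynomial image and the $L^p$-approximation properties \eqref{eq:l2proj:error:cell}--\eqref{eq:l2proj:error:faces} for the non-polynomial residuals --- this is what your "local inverse inequalities" must amount to, since the difference $\bv-\bR_T^k(\uline{\bI}_T^k\bv)$ itself is not piecewise polynomial and no inverse inequality applies to it directly); and consistency from the Koszul decomposition \eqref{eq:kz.decomp} together with \eqref{eq:darcyT:weak:bd}--\eqref{eq:darcyT:weak:ab} and an element-wise integration by parts. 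Your argument for (iii) is in fact more explicit than the paper's, which only records the inclusion $\Goly{{\rm c},k-1}(T)\subset\Goly{{\rm c},k-1}(\TTs)$ and a projector-composition identity; the two computations are equivalent.

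One caution on (i), which you correctly flag as the main obstacle but justify too optimistically. The surjectivity of $\bws\mapsto\bigl(\DIV\bws,\,\bpi^{{\rm c},k-1}_{\Goly{},\TTs}\bws\bigr)$ on $\RTN{k}_0(\TTs)$ does not follow from "surjectivity of the divergence" plus "richness of the interior RT moments" taken separately: on each simplex $\tau$ the interior moments against $\Goly{k-1}(\tau)=\GRAD\Poly{k}(\tau)$ also determine part of the divergence, and the decomposition $\Polyd{k-1}(\tau)=\Goly{k-1}(\tau)\oplus\Goly{{\rm c},k-1}(\tau)$ is not orthogonal, so the constraints \eqref{eq:darcyT:weak:a} and \eqref{eq:darcyT:weak:ab} are coupled through the same degrees of freedom. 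The decoupling is precisely the content of Lemma \ref{lemma:RT.lifting}: a \emph{divergence-free} element of $\RTN{k}_0(\TTs)$ with vanishing normal components on all interior simplicial faces that realizes an arbitrary prescribed $\Goly{{\rm c},k-1}(\TTs)$-projection, a construction that relies on the common-vertex assumption on $\TTs$. Without this lemma (or an equivalent explicit lift) the inf-sup step in your plan is not yet established; with it, your argument for (i) goes through as in \cite{Castanon-Quiroz.Di-Pietro:23}.
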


The proof makes use of the following Lemma, whose proof is given in \cite[Appendix]{Castanon-Quiroz.Di-Pietro:23}

\begin{lemma}[Raviart--Thomas--Nédélec lifting of the projection on $\Goly{{\rm c},k-1}(\mathfrak{T}_T)$]\label{lemma:RT.lifting}
  Let $T\in \Th$ and a function $\bv\in {\Ldeuxd[T]}$ be given.
  Then, for any integer $k\geq0$, there exists $\widetilde{\bR}_T^k(\bv)\in\RTN{k}_0(\TTs)$ such that
  \[
  \begin{aligned} 
    \bpi^{{\rm c},k-1}_{\Goly{},\TTs}\widetilde{\bR}_T^k(\bv)&=\bpi^{{\rm c},k-1}_{\Goly{},\TTs}\bv,
    \\ 
    \DIV \widetilde{\bR}_T^k(\bv)&=0,
    \\ 
    \widetilde{\bR}_T^k(\bv) \cdot\bn_{\sigma} &= 0
    \qquad\forall \sigma\in \Fhs[T]^{\rm i},
    \\ 
    \norm{\Ldeuxd[T]}{\widetilde{\bR}_T^k(\bv)}&\lesssim\norm{\Ldeuxd[T]}{\bv},   
  \end{aligned}
  \]
  where we remind the reader that $\Fhs[T]^{\rm i}$ is the set of  the interior faces of the submesh of $T$.
\end{lemma}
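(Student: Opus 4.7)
The plan is to build $\widetilde{\bR}_T^k(\bv)$ simplex by simplex by prescribing the standard Raviart--Thomas--Nédélec degrees of freedom on each $\tau \in \TTs$. Any $\bw \in \RTN{k}(\tau)$ is uniquely determined by its face normal moments $\int_\sigma (\bw \cdot \bn_\sigma)\,q$ for $q \in \Poly{k}(\sigma)$ on each face $\sigma$ of $\tau$, together with its interior moments $\int_\tau \bw \cdot \bp$ for $\bp \in \Polyd{k-1}(\tau)$. Using the Koszul decomposition \eqref{eq:kz.decomp}, the interior test space splits as $\Polyd{k-1}(\tau) = \Goly{k-1}(\tau) \oplus \Goly{{\rm c},k-1}(\tau)$ with $\Goly{k-1}(\tau) = \GRAD \Poly{k}(\tau)$, which is the structural ingredient enabling the divergence-free property.

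When $k \in \{0,1\}$, the convention $\Goly{{\rm c},k-1}(\TTs) = \{\bzero\}$ makes $\bpi_{\Goly{},\TTs}^{{\rm c},k-1}\bv = \bzero$, and $\widetilde{\bR}_T^k(\bv) \coloneq \bzero$ trivially satisfies all four claims. For $k \geq 2$, I would define $\widetilde{\bR}_T^k(\bv)|_\tau \in \RTN{k}(\tau)$ on each $\tau \in \TTs$ by imposing \emph{(a)} vanishing face normal moments on every face of $\tau$, \emph{(b)} vanishing interior moments against $\Goly{k-1}(\tau)$, and \emph{(c)} interior moments against $\Goly{{\rm c},k-1}(\tau)$ equal to those of $\bv|_\tau$, i.e.\ $\int_\tau \widetilde{\bR}_T^k(\bv) \cdot \bg = \int_\tau \bv \cdot \bg$ for all $\bg \in \Goly{{\rm c},k-1}(\tau)$. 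These conditions saturate the RTN degrees of freedom, so $\widetilde{\bR}_T^k(\bv)|_\tau$ is uniquely determined. Because the normal trace vanishes from both sides of every interior simplicial face and on every portion of $\partial T$, the piecewise function lies in $\Hdiv[T]$, belongs to $\RTN{k}_0(\TTs)$, and has vanishing normal trace on each $\sigma \in \Fhs[T]^{\rm i}$.

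Verifying the remaining two properties is short. The projection identity $\bpi_{\Goly{},\TTs}^{{\rm c},k-1}\widetilde{\bR}_T^k(\bv) = \bpi_{\Goly{},\TTs}^{{\rm c},k-1}\bv$ is simply \emph{(c)} read globally, using that $\Goly{{\rm c},k-1}(\TTs)$ decomposes as the direct sum $\bigoplus_{\tau \in \TTs}\Goly{{\rm c},k-1}(\tau)$ of its simplex-wise restrictions. For the divergence, integration by parts on each $\tau$ combined with \emph{(a)} gives, for every $q \in \Poly{k}(\tau)$, the inline identity $\int_\tau (\DIV \widetilde{\bR}_T^k(\bv))\,q = -\int_\tau \widetilde{\bR}_T^k(\bv) \cdot \GRAD q$, which vanishes by \emph{(b)} since $\GRAD q \in \Goly{k-1}(\tau)$. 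Because $\DIV \widetilde{\bR}_T^k(\bv)\big|_\tau \in \Poly{k}(\tau)$, the choice $q = \DIV \widetilde{\bR}_T^k(\bv)$ yields $\DIV \widetilde{\bR}_T^k(\bv) = 0$ on each simplex, hence globally by continuity of the normal trace.

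The main technical point is the $\bL^2$-stability estimate. I would pull the construction back to a reference simplex via the contravariant Piola transform: there, the linear map sending interior moments against $\Goly{{\rm c},k-1}(\tau)$ (all other DOFs being zero) to the corresponding element of $\RTN{k}(\tau)$ is a bounded isomorphism between two norms on a finite-dimensional space. Shape regularity of $\TTs$, inherited from the regularity of $(\Mh)_h$, keeps the Piola scaling constants uniform across $\tau$ and yields $\|\widetilde{\bR}_T^k(\bv)\|_{\bL^2(\tau)} \lesssim \|\bpi_{\Goly{},\tau}^{{\rm c},k-1}\bv\|_{\bL^2(\tau)} \leq \|\bv\|_{\bL^2(\tau)}$ with hidden constant depending only on $k$ and the mesh regularity parameter. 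Summing the squares over $\tau \in \TTs$ closes the argument.
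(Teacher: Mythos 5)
Your construction is correct and coincides with the approach the paper relies on (the proof is deferred to the appendix of \cite{Castanon-Quiroz.Di-Pietro:23}, where the lifting is likewise obtained by prescribing the Raviart--Thomas--N\'ed\'elec degrees of freedom simplex by simplex: vanishing normal moments, vanishing interior moments against $\Goly{k-1}(\tau)=\GRAD\Poly{k}(\tau)$, and interior moments against $\Goly{{\rm c},k-1}(\tau)$ matching those of $\bv$), and all four properties follow exactly as you argue. The one step worth spelling out is in the stability estimate: you should verify that the contravariant Piola map actually carries moments against $\Goly{{\rm c},k-1}(\tau)$ --- whose Koszul center is $\bx_T$ --- to moments against $\Goly{{\rm c},k-1}(\hat\tau)$ centered at a fixed reference vertex (this holds via the identities $J^\intercal(J\boldsymbol{a})^{\perp}=\det(J)\,\boldsymbol{a}^{\perp}$ in 2D and $J^\intercal(J\boldsymbol{a}\times J\boldsymbol{b})=\det(J)\,\boldsymbol{a}\times\boldsymbol{b}$ in 3D, provided the affine map sends that reference vertex to $\bx_T$), which is precisely where the common-vertex assumption on $\TTs$, together with shape regularity, is needed to make the hidden constant uniform.
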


\begin{proof}[Proof of Lemma \ref{lemm:rtn}]
  \noindent\underline{(i) \emph{Existence, uniqueness, and boundedness}.}
  The proof of existence and uniqueness of $\bR_T^k\uline{\bv}_T$ are the same as in \cite[Lemma 2.i]{Castanon-Quiroz.Di-Pietro:23} using Lemma \ref{lemma:RT.lifting} above.
  We now prove the bound \eqref{ineq:rtn:bound}. Using   essentially the same steps as in \cite{Castanon-Quiroz.Di-Pietro:23}, it is inferred that
  \begin{align}
    \label{eq:bound.solddarcy}
    \norm{\Ldeuxd[T]}{{\bR}_{T}^k\uline{\bv}_T}
    &
    \lesssim
    \norm{\Ldeuxd[T]}{{\bv}_T}
    +
    h_T
    \|\uline{\bv}_T\|_{1,T}
    +
    \sum_{F\in \Fh[T]}h_F^{\frac{1}{2}} \norm{\Ldeuxd[F]}{\bv_F}.
  \end{align}
  Using a triangle inequality 
    after inserting $\pm(\bpi_T^k \bv_T - {\bR}_T^k(\uline{\bI}_T^k\bpi_T^k\bv_T) )$ into the norm in the left-hand side,
  we have
  \begin{equation}\label{eq:ineq:rtn:bound.proof.i}
    \begin{aligned}
      \norm{\Ldeuxd[T]}{\bv_T - {\bR}_{T}^k\uline{\bv}_T  }
      &\leq
      \norm{\Ldeuxd[T]}{\bv_T - \bpi_T^{k}\bv_T }
      + \norm{\Ldeuxd[T]}{\bpi_T^{k}\bv_T -{\bR}_{T}^k(\uline{\bI}_T^k\bpi_T^k\bv_T)}\\
      &\quad
      + \norm{\Ldeuxd[T]}{{\bR}_{T}^k(\uline{\bI}_T^k\bpi_T^k\bv_T) -{\bR}_{T}^k\uline{\bv}_T }
      \eqcolon
      \mathfrak{T}_1+\mathfrak{T}_2 + \mathfrak{T}_3.
    \end{aligned}
  \end{equation}
  To bound $\mathfrak{T}_1$, we use the approximation properties \eqref{eq:l2proj:error:cell} of $\bpi_{T}^k$ with $(l,m,r,s)=(k,0,2,1)$ to get
  \begin{equation}\label{eq:est.T1}
    \mathfrak{T}_1
    \lesssim h_T\seminorm{\Hund[T]}{ {\bv}_T}
    \overset{\eqref{eq:norm.1T}}\lesssim h_T \norm{1,T}{\uline{\bv}_T}.
  \end{equation}
  Letting ${\hat{\bvs}_T} \coloneq  {\bR}_{T}^k(\uline{\bI}_T^k\bpi_T^k\bv_T)$ and using the same arguments as in \cite{Castanon-Quiroz.Di-Pietro:23}, we obtain
  \begin{equation}\label{eq:est.T2}
    \mathfrak{T}_2=0.
  \end{equation}
  By linearity, $\mathfrak{T}_3=\norm{\Ldeuxd[T]}{{\bR}_{T}^k(\uline{\bI}_T^k\bpi_T^k\bv_T -\uline{\bv}_T) }$.
  Thus, using the bound  \eqref{eq:bound.solddarcy} with $\uline{\bv}_T$ replaced by $\uline{\bI}_T^k\bpi_T^k\bv_T -\uline{\bv}_T$, the fact that  ($\uline{\bI}_T^k\bpi_T^k\bv_T - \uline{\bv}_T)_T =(\bpi_T^k\bv_T - {\bv}_T)$ and $(\uline{\bI}_T^k\bpi_T^k\bv_T - \uline{\bv}_T)_F = (\bpi_T^k\bv_T-\bv_F)$ for all $F\in\Fh[T]$, and recalling the definition \eqref{eq:norm.1T} of the $\norm{1,T}{{\cdot}}$-norm, we can write
  \[
  \begin{aligned}
    \mathfrak{T}_3
    &\lesssim 
    \norm{\Ldeuxd[T]}{\bpi_T^{k}\bv_T -\bv_T   }
    +
    h_T \norm{1,T}{\uline{\bI}_T^k\bpi_T^{k}\bv_T - \uline{\bv}_T }
    + \sum_{F\in \Fh[T]}h_F^{\frac{1}{2}}  \norm{\Ldeuxd[F]}{\bpi_T^{k}\bv_T- \bv_F}\\
    &\lesssim
    h_T\seminorm{\Hund[T]}{ {\bv}_T}
    +
    h_T\seminorm{\Hund[T]}{\bpi_T^{k}\bv_T -\bv_T   }
    + \sum_{F\in \Fh[T]}h_F^\frac{1}{2} \left(
    \norm{\Ldeuxd[F]}{\bpi_T^{k}\bv_T- \bv_T}
    + \norm{\Ldeuxd[F]}{\bpi_T^{k}\bv_T- \bv_F}
    \right)\\
    &\lesssim
    h_T\seminorm{\Hund[T]}{ {\bv}_T}
    +
    \sum_{F\in \Fh[T]}h_F^\frac{1}{2} \norm{\Ldeuxd[F]}{\bpi_T^{k}\bv_T- \bv_T}+
    \sum_{F\in \Fh[T]}h_F^\frac{1}{2} \norm{\Ldeuxd[F]}{\bv_T- \bv_F}\\
    &\lesssim
    h_T\seminorm{\Hund[T]}{ {\bv}_T}
    +
    h_T\seminorm{1,\partial T}{ \uline{\bv}_T}
    \lesssim
    h_T \left (\seminorm{\Hund[T]}{ {\bv}_T}^2
    +  \seminorm{1,\partial T}{ \uline{\bv}_T}^2
    \right)^{\frac{1}{2}}
  \end{aligned}
  \]
  where, in the second step, we have used the approximation properties \eqref{eq:l2proj:error:cell} of $\bpi_{T}^k$ with $(l,m,r,s)=(k,0,2,1)$ for the first addend,
  expanded the second addend using the definition \eqref{eq:norm.1T} of $\norm{1,T}{{\cdot}}$, 
  and used a triangle inequality in the boundary term;
  in the third step, we have used \eqref{eq:l2proj:error:cell} with $(l,m,r,s)=(k,1,2,1)$ for the second addend, added $\pm \bv_T$  inside the norm of the last term, and used a triangle inequality;
  in the fourth step, we have used for the second addend the trace approximation properties \eqref{eq:l2proj:error:faces} of $\bpi_{T}^k$ with $(l,r,s)=(k,2,1)$ along with $h_F \simeq h_T$ and the definition \eqref{eq:norm.1T}  of $\seminorm{1,\partial T}{{\cdot}}$ for the third addend;
  finally, in the last step, we have used the inequality 
  \begin{equation}\label{eq.l1l2.bound}
    \left(\sum_{i=1}^n a_i \right)^2 {\simeq} \sum_{i=1}^n a_i^2,
  \end{equation}
  valid for any integer $n\ge 1$ and non-negative real numbers $a_i$. 
    This gives
    \begin{equation}\label{eq:est.T3}
      \mathfrak{T}_3 \lesssim h_T \norm{1,T}{\uline{\bv}_T},
    \end{equation}
  Plugging \eqref{eq:est.T1}, \eqref{eq:est.T2}, and \eqref{eq:est.T3} into \eqref{eq:ineq:rtn:bound.proof.i}, and using the definition \eqref{eq:norm.1T} of the $\norm{1,T}{{\cdot}}$-norm, we obtain \eqref{ineq:rtn:bound}.%
  \medskip\\
  \noindent\underline{(ii) \emph{Approximation in $\bW^{s,p}$}}.
  For the sake of brevity, set $\boldsymbol{\Pi}_T^k \coloneq \bR_T^k \circ \uline{\bI}_T^k$.
  To prove \eqref{ineq:rtn:approx.Wsp}, we use \cite[Theorem~1.43]{Di-Pietro.Droniou:20}, which requires to prove the following relations:
  \begin{enumerate}[(a)]
  \item\label{item:polynomial.consistency} {\emph{Polynomial consistency.}} $\boldsymbol{\Pi}_T^k\bq=\bq$ for all $\bq\in\Polyd{k}(T)$.
  \item {\emph{Boundedness.}} For all integer $p \in [1,\infty]$ and all $\bv \in \bW^{1,p}(T)$, it holds
    \begin{gather}\label{eq:approx.Pi:1}
      \norm{\bL^p(T)}{\boldsymbol{\Pi}_T^k \bv}
      \lesssim \norm{\bL^p(T)}{\bv}
      + h_T \seminorm{\bW^{1,p}(T)}{\bv},
      \\ \label{eq:approx.Pi:2}
      \seminorm{\bW^{1,p}(\TTs)}{\boldsymbol{\Pi}_T^k \bv}
      \lesssim \seminorm{\bW^{1,p}(T)}{\bv}.
    \end{gather}
  \end{enumerate}
  We prove {Point~(\ref{item:polynomial.consistency})
    using the same arguments that lead to \eqref{eq:est.T2}} along with the fact that $\bpi_T^k$ is a projection onto $\Polyd{k}(T)$.
  To prove \eqref{eq:approx.Pi:1}, let $\bv \in \bW^{1,p}(T)$ and set, for the sake of brevity, $\uline{\hat{\bv}}_T\coloneq\uline{\bI}_T^k\bv$.
  We have that
  \begin{equation}\label{ineq.Pi.proof}
    \begin{aligned}
      \norm{\bL^p(T)}{\boldsymbol{\Pi}_T^k \bv}^2
      &=
      \norm{\bL^p(T)}{\bR_T^k\uline{\hat{\bv}}_T}^2
      \\
      &\lesssim h_T^{2d{\left(\frac{1}{p}-\frac{1}{2}\right)}}\norm{\bL^2(T)}{\bR_T^k\uline{\hat{\bv}}_T}^2
      \\
      &\lesssim h_T^{2d{\left(\frac{1}{p}-\frac{1}{2}\right)}}\left(
      \norm{\bL^2(T)}{\hat{\bv}_T}^2 + h_T^2 \norm{1,T}{\uline{\hat{\bv}}_T}^2
      \right)
      \\
      &\lesssim \norm{\bL^p(T)}{\hat{\bv}_T}^2 + h_T^2h_T^{2d{\left(\frac{1}{p}-\frac{1}{2}\right)}} \norm{1,T}{\uline{\hat{\bv}}_T}^2
      \\
      &\lesssim \norm{\bL^p(T)}{\hat{\bv}_T}^2 
      + h_T^2\norm{\bL^p(T)}{\GRAD{\hat{\bv}_T}}^2
      + h_T^2\sum_{F \in \mathcal{F}_T}h_T^{2d{\left(\frac{1}{p}-\frac{1}{2}\right)}} h_F^{-1} \norm{\bL^2(F)}{\hat{\bv}_F - \hat{\bv}_T}^2,
    \end{aligned}
  \end{equation}
  where:
  in the second step, we have used the 
    discrete local Lebesgue embeddings \eqref{eq:rev:disc.emb.pw}
  below with $(\alpha,\beta, X)=(p,2,T)$;
  in the third step, we have 
    inserted $\pm\hat{\bv}_T$ into the norm and used a triangle inequality followed by \eqref{ineq:rtn:bound} with $\uline{\bv}_T = \uline{\hat{\bv}}_T$;
  in the fourth step, we have used again the local discrete Lebesgue embeddings \eqref{eq:rev:disc.emb.pw} below, this time with $(\alpha,\beta, X)=(2,p,T)$, for the first term;
  the last step follows from the definition \eqref{eq:norm.1T} of $\norm{1,T}{{\cdot}}$ along with \eqref{eq:rev:disc.emb.pw} with $(\alpha,\beta, X)=(2,p,T)$ for the gradient term.
  To bound the last term in the right hand side of \eqref{ineq.Pi.proof}, we observe that, for all $F \in \mathcal{F}_T$, it holds
  \begin{align}
    h_T^{2d{\left(\frac{1}{p}-\frac{1}{2}\right)}}h_F^{-1}
    \norm{\bL^2(F)}{\hat{\bv}_F - \hat{\bv}_T}^2\
    &\lesssim
    h_T^{2d{\left(\frac{1}{p}-\frac{1}{2}\right)}}h_F^{-1}
    |F|^{2{\left(\frac{1}{2}-\frac{1}{p}\right)}}
    \norm{\bL^p(F)}{\hat{\bv}_F - \hat{\bv}_T}^2\notag\\
    &\lesssim
    h_F^{2{\left(\frac{1-p}{p}\right)}}
    \norm{\bL^p(F)}{\hat{\bv}_F - \hat{\bv}_T}^2\label{ineq.Pi.proof.2},
  \end{align}
  where in the first step we have used
  \eqref{eq:rev:disc.emb} below with $(\alpha,\beta, X)=(2,p,F)$, and in the last step the relations $|F|\simeq h_F^{(d-1)}$ and $h_T\simeq h_F$ valid for regular meshes. 
  Moreover, using \eqref{eq.l1l2.bound} with $n=2$ along with the idempotency of $\bpi_F^k$, we have
  \begin{align*}
    \norm{\bL^p(F)}{\hat{\bv}_F - \hat{\bv}_T}^2
    &\lesssim
    \norm{\bL^p(F)}{\bpi_F^k(\bv - \bpi_T^k\bv)}^2
    +
    \norm{\bL^p(F)}{ \bpi_T^k\bv - \hat{\bv}_T}^2\\
    &\lesssim
    \norm{\bL^p(F)}{\bv - \bpi_T^k\bv}^2
    +
    \norm{\bL^p(F)}{\bv - \hat{\bv}_T}^2,
  \end{align*}
  where, in the second line, we have used the $\bL^p$-boundedness of $\bpi_F^k$ {(cf. \cite[Lemma~1.44]{Di-Pietro.Droniou:20})} for the first term,
  inserted $\pm\bv$ and used a triangle inequality followed by \eqref{eq.l1l2.bound} with $n=2$
  for the second term.
  Plugging the inequality above into \eqref{ineq.Pi.proof.2}, then using the result into \eqref{ineq.Pi.proof}, taking the square root, and then using \eqref{eq.l1l2.bound}, we obtain
  \begin{align*}
    \norm{\bL^p(T)}{\boldsymbol{\Pi}_T^k \bv}
    &\lesssim \norm{\bL^p(T)}{\hat{\bv}_T} 
    + h_T\norm{\bL^p(T)}{\GRAD{\hat{\bv}_T}}
    + h_T\sum_{F \in \mathcal{F}_T} h_F^{{\frac{1-p}{p}}}
    \left(
    \norm{\bL^p(F)}{\bv - \bpi_T^k\bv}
    +\norm{\bL^p(F)}{\bv - \hat{\bv}_T}
    \right).
  \end{align*}
  Therefore, using, respectively, the $\bL^p$- and $\bW^{1,p}$-boundedness of $\bpi_T^{k^*}$ for the first and second term, a triangle inequality along with $h_T\lesssim 1$ followed by the trace approximation properties \eqref{eq:l2proj:error:faces} of the $L^2$-orthogonal projector with $(l,r,s)=(k,p,1)$ for the first term in brackets, and with $(l,r,s)=(k^\star,p,1)$ for the second term in brackets, and, finally, using the geometric bound \eqref{ineq:card.IT.F} for $\Fh[T]$, we obtain \eqref{eq:approx.Pi:1}.
  \\
  Let us now prove \eqref{eq:approx.Pi:2}.
  First of all we notice that, by the discrete inverse inequality \eqref{eq:inverse} below,  it holds
  \begin{align}\label{ineq:RT.inverse}
    \norm{\bL^{2}(T)}{\GRAD\bR_T^k\uline{\bv}_T}
    \lesssim
    h_T^{-1}\norm{\Ldeuxd[T]}{\bR_T^k\uline{\bv}_T}.
  \end{align}
  Thus, recalling that $\boldsymbol{\Pi}_T^k \coloneq \bR_T^k \circ \uline{\bI}_T^k$, we infer
  \[
  \begin{aligned}
    \seminorm{\bW^{1,p}(T)}{\boldsymbol{\Pi}_T^k \bv}
    &= \seminorm{\bW^{1,p}(T)}{\boldsymbol{\Pi}_T^k (\bv - \boldsymbol{\pi}_T^0 \bv)}
    \overset{\eqref{ineq:RT.inverse}}\lesssim h_T^{-1} \norm{\bL^p(T)}{\boldsymbol{\Pi}_T^k (\bv - \boldsymbol{\pi}_T^0 \bv)}
    \\
    \overset{\eqref{eq:approx.Pi:1}}&\lesssim
    h_T^{-1}\norm{\bL^p(T)}{\bv - \boldsymbol{\pi}_T^0 \bv}
    + \seminorm{\bW^{1,p}(T)}{{\bv} - \boldsymbol{\pi}_T^0 \bv}
    \lesssim \seminorm{\bW^{1,p}(T)}{\uline{\bv}_T},
  \end{aligned}
  \]
  where in the last step we have used the approximation properties \eqref{eq:l2proj:error:cell} of $\bpi_{T}^0$ with $(l,m,r,s)=(0,0,p,1)$ for the first term and with $(l,m,r,s)=(0,1,p,1)$ for the second term along with a triangle inequality and the fact that $h_T\lesssim 1$.
  \medskip\\
  \noindent\underline{(iii) \emph{Consistency}}.
  The proof follows the same steps as in
  \cite{Castanon-Quiroz.Di-Pietro:23}
  along with  Lemma \ref{lemma:RT.lifting} above
  and  the fact that  $\bpi^{{\rm c},k-1}_{\Goly{},T} \circ \bpi^{{\rm c},k-1}_{\Goly{},\TTs}= \bpi^{{\rm c},k-1}_{\Goly{},T}$,
  since
  $\Goly{{\rm c},k-1}(T)\subset\Goly{{\rm c},k-1}(\TTs)$.
\end{proof}

Let now $\RTN{k}(\Ths)$ denote the global ($\Hdiv$-conforming) Raviart--Thomas--N\'ed\'elec space on $\Ths$.
We define the \emph{global velocity reconstruction} $\bR_h^k: \uline{\bU}_h^k \rightarrow \RTN{k}(\Ths)$ patching the local contributions:
For all  $\uline{\bv}_h \in \uline{\bU}_h^k$,
\begin{align*}
  (\bR_h^k\uline{\bv}_h)_{|T}
  \coloneq 
  \bR_T^k\uline{\bv}_T \qquad\forall T \in {\cal{T}}_h.
\end{align*}
Notice that $\bR_h^k\uline{\bv}_h$ is well-defined, since its normal components across mesh interfaces are continuous as a consequence of  \eqref{eq:darcyT:weak:bd}  combined with the single-valuedness of interface unknowns.

To discretize the unsteady term in \eqref{eq:nstokes:weak}, for all $T \in \Th$ we introduce the bilinear form $a_{\tR,T}:\uline{\bU}_T^k\times\uline{\bU}_T^k \to \mathbb{R}$ defined as follows:
\begin{equation}\label{def:aRT}
  a_{\tR,T}(\uline{\bv}_T,\uline{\bw}_T)
  \coloneq 
  \int_T \bR_T^k\uline{\bv}_T \cdot \bR_T^k\uline{\bw}_T
  + s_{\tR,T}(\uline{\bv}_T,\uline{\bw}_T),
\end{equation}
where  the second term is the following stabilisation bilinear form:
\[
s_{\tR,T}(\uline{\bv}_T,\uline{\bw}_T)
\coloneq 
\int_T\bdelta_{\tR,T}^k\uline{\bv}_T\cdot\bdelta_{\tR,T}^k\uline{\bw}_T
+ \sum _{F\in\calF_T}h_F \int _F \bdelta_{\tR,TF}^k\uline{\bv}_T \cdot \bdelta_{\tR,TF}^k\uline{\bw}_T,
\]
with difference operators $\bdelta_{\tR,T}^k:\uline{\bU}_T^k\to\Polyd{k^\star}(T)$ and, for all $F\in\calF_T, \bdelta_{\tR,TF}^k:\uline{\bU}_T^k\to\Polyd{k}(F)$ such that, for all $\uline{\bv}_T \in \uline{\bU}^k_T$,
\begin{equation}\label{def:deltasR}
  \bdelta_{\tR,T}^k(\uline{\bv}_T)
  \coloneq 
  \bpi^{k^\star}_T(\bR_T^k\uline{\bv}_T- \bv_T)
  \qquad
  \text{and}
  \qquad
  \bdelta_{\tR,TF}^k(\uline{\bv}_T)
  \coloneq 
  \bpi^k_F(\bR_T^k\uline{\bv}_T- \bv_F) \quad \forall F \in \calF_T.
\end{equation}
We also introduce the $\bL^2$-like norm $\norm{\tR,T}{{\cdot}}:\uline{\bU}_T^k\to\mathbb{R}$ induced by  $a_{\tR,T}(\cdot,\cdot)$, i.e.,
\begin{align}\label{def:normRT}
  \norm{\tR,T}{\uline{\bv}_T}
  \coloneq
  a_{\tR,T}(\uline{\bv}_T,\uline{\bv}_T)^\frac{1}{2}.
\end{align}
Finally, we introduce  the global bilinear form $a_{\tR,h}:\uline{\bU}_h^k\times\uline{\bU}_h^k \to \mathbb{R}$, and the corresponding global norm $\norm{\tR,h}{{\cdot}}:\uline{\bU}_h^k\to\mathbb{R}$ defined, respectively, by:
For all $(\uline{\bv}_h,\uline{\bw}_h) \in \uline{\bU}_h^k \times \uline{\bU}_h^k$,
\[ 
a_{\tR,h}(\uline{\bv}_h,\uline{\bw}_h)
\coloneq
\sum_{T\in \Th} 
a_{\tR,T}(\uline{\bv}_T,\uline{\bw}_T),
\qquad
\text{and}
\qquad
\|\uline{\bv}_h \|_{\tR,h}^2
\coloneq
\sum_{T\in \Th} 
\|\uline{\bv}_T \|_{\tR,T}^2.
\] 

The following Lemma summarizes some key properties relevant for the analysis.

\begin{lemma}[Properties of $\norm{\tR,T}{{\cdot}}$ ]\label{lem:ns:th}
  For all $T\in \calT_h$ and all $\uline{\bv}_T\in\uline{\bU}_{T}^k$, it holds
  \begin{equation}\label{eq:L2.hho<=RT}
    \norm{\Ldeuxd[T]}{\bv_T}
    {+ h_T\|\uline{\bv}_T \|_{1,T}}
    \lesssim \norm{\tR,T}{\uline{\bv}_T}.
  \end{equation}
\end{lemma}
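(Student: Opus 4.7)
The plan is to bound each of the three contributions to the left-hand side of \eqref{eq:L2.hho<=RT}---namely $\norm{\Ldeuxd[T]}{\bv_T}$, $h_T\norm{\Ldeuxd[T]}{\GRAD\bv_T}$, and the boundary quantities $h_T h_F^{-\nicefrac12}\norm{\Ldeuxd[F]}{\bv_F-\bv_T}$ for each $F\in\Fh[T]$---separately by $\norm{\tR,T}{\uline{\bv}_T}$. The ingredients I will use are the defining identities of $\bdelta_{\tR,T}^k$ and $\bdelta_{\tR,TF}^k$ in \eqref{def:deltasR} combined with the fact that $\bv_T\in\Polyd{k^\star}(T)$ and $\bv_F\in\Polyd{k}(F)$ are fixed points of $\bpi_T^{k^\star}$ and $\bpi_F^k$, respectively; the $L^2$-boundedness of these projectors; a discrete inverse inequality for polynomials on $T$ together with a discrete trace inequality applied either on $T$ (for $\bv_T$) or piecewise on the simplicial submesh $\TTs$ (for the reconstruction $\bR_T^k\uline{\bv}_T$); and the mesh regularity relation $h_T\simeq h_F$.

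I would first treat the element-based $L^2$-bound. Since $\bv_T\in\Polyd{k^\star}(T)$, the identity
\[
\bv_T=\bpi_T^{k^\star}\bv_T=\bpi_T^{k^\star}\bR_T^k\uline{\bv}_T-\bdelta_{\tR,T}^k\uline{\bv}_T,
\]
combined with a triangle inequality and the $L^2$-boundedness of $\bpi_T^{k^\star}$, yields $\norm{\Ldeuxd[T]}{\bv_T}\lesssim\norm{\Ldeuxd[T]}{\bR_T^k\uline{\bv}_T}+\norm{\Ldeuxd[T]}{\bdelta_{\tR,T}^k\uline{\bv}_T}\lesssim\norm{\tR,T}{\uline{\bv}_T}$, where the last inequality uses the definitions \eqref{def:aRT} and \eqref{def:normRT}. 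The volumetric gradient contribution then follows at once from a discrete inverse inequality applied to the polynomial $\bv_T$: $h_T\norm{\Ldeuxd[T]}{\GRAD\bv_T}\lesssim\norm{\Ldeuxd[T]}{\bv_T}\lesssim\norm{\tR,T}{\uline{\bv}_T}$.

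For the boundary contributions, I would use $\bpi_F^k\bv_F=\bv_F$ to split, for each $F\in\Fh[T]$,
\[
\bv_F-\bv_T=-\bdelta_{\tR,TF}^k\uline{\bv}_T+\bpi_F^k(\bR_T^k\uline{\bv}_T-\bv_T)+(\bpi_F^k\bv_T-\bv_T).
\]
Taking $\Ldeuxd[F]$-norms, squaring, and multiplying by $h_T\simeq h_F$, the first summand is directly absorbed into $s_{\tR,T}(\uline{\bv}_T,\uline{\bv}_T)\le\norm{\tR,T}{\uline{\bv}_T}^2$. For the second summand, the $L^2$-boundedness of $\bpi_F^k$ reduces the norm, and then a piecewise discrete trace inequality on $\TTs$---applied on each $\sigma\in\Fhs[F]$ and its containing simplex $\tau\in\TTs$, on which $\bR_T^k\uline{\bv}_T\in\RTN{k}(\tau)$ and $\bv_T\in\Polyd{k^\star}(T)$ are both polynomial---yields $\norm{\Ldeuxd[F]}{\bR_T^k\uline{\bv}_T-\bv_T}^2\lesssim h_T^{-1}\norm{\Ldeuxd[T]}{\bR_T^k\uline{\bv}_T-\bv_T}^2$, the summation over $\sigma$ being uniform thanks to $h_\tau\simeq h_T$ and $\card(\TTs)\le N$ from \eqref{ineq:card.IT.F}; a triangle inequality together with the bound on $\norm{\Ldeuxd[T]}{\bv_T}$ from the previous paragraph then closes the estimate. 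The third summand is treated analogously, combining the $L^2$-orthogonality of $\bpi_F^k$ with the standard discrete trace inequality for the polynomial $\bv_T$ on $T$. Summing over $F\in\Fh[T]$, whose cardinality is also bounded by \eqref{ineq:card.IT.F}, concludes.

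The main obstacle is the middle summand $\bpi_F^k(\bR_T^k\uline{\bv}_T-\bv_T)$: because $\bR_T^k\uline{\bv}_T$ is only \emph{piecewise} polynomial on $\TTs$, a discrete trace inequality cannot be invoked at the $T$-level directly, and one must apply its submesh counterpart face by face and re-sum with uniform constants via mesh regularity. A minor additional subtlety arises for $k\ge 2$, where $\bv_T\in\Polyd{k+1}(T)$ is no longer fixed by $\bpi_F^k$ so that the third summand does not vanish; controlling it consumes precisely the $h_T$ slack on the left-hand side of \eqref{eq:L2.hho<=RT}, which explains why the natural stability for the reconstruction norm is of $L^2$-type rather than an $H^1$-type equivalence.
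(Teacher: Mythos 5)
Your proposal is correct and follows essentially the same route as the paper's proof: both establish $\norm{\Ldeuxd[T]}{\bv_T}\lesssim\norm{\tR,T}{\uline{\bv}_T}$ via $\bv_T=\bpi_T^{k^\star}\bR_T^k\uline{\bv}_T-\bdelta_{\tR,T}^k\uline{\bv}_T$, obtain the gradient term by an inverse inequality, and control the face jumps through the difference operators together with $L^2$-boundedness of the projectors, the piecewise discrete trace inequality on $\TTs$ for the (only piecewise polynomial) reconstruction, and mesh regularity. The only cosmetic difference is in the splitting of $\bv_F-\bv_T$ (your three-term decomposition versus the paper's two-term split followed by inserting $\pm\bpi_T^{k^\star}\bR_T^k\uline{\bv}_T$), which does not change the substance of the argument.
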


\begin{remark}[Norm $\norm{\tR,h}{{\cdot}}$]
  A straightforward consequence of the above results is that the map $\norm{\tR,h}{{\cdot}}: \uline{\bU}_{h}^k\to \mathbb{R}$ is a norm in the space $\uline{\bU}_{h,0}^k$.
\end{remark}

\begin{proof}[Proof of Lemma~\ref{lem:ns:th}]
  Using the fact that $\bv_T\in\Polyd{k^\star}(T)$ along with \eqref{eq.l1l2.bound} for $n=2$, we get 
  \begin{align*}
    \|\bv_T\|_{\Ldeuxd[T]}^2 
    =
    \|\bpi_T^{k^\star}\bv_T\|_{\Ldeuxd[T]}^2
    &\lesssim \|\bpi_T^{k^\star}(\bv_T-\bR_T^k\uline{\bv}_T)\|_{\Ldeuxd[T]}^2
    + \|\bpi_T^{k^\star}\bR_T^k\uline{\bv}_T\|_{\Ldeuxd[T]}^2 \\
    &\leq \|\bpi_T^{k^\star}(\bv_T-\bR_T^k\uline{\bv}_T)\|_{\Ldeuxd[T]} ^2
    + \|\bR_T^k\uline{\bv}_T\|_{\Ldeuxd[T]}^2
    \leq
    \|\uline{\bv}_T \|_{\tR,T}^2,
  \end{align*}
  where  we have used the $\bL^2$-boundedness of $\bpi_T^{k^\star}$ in the second line and the definition \eqref{def:normRT} of $\norm{\tR,T}{{\cdot}}$ to conclude.

  To bound the second term in the left-hand side of \eqref{eq:L2.hho<=RT} we preliminarily observe that, for any face $F\in\calF_T$, it holds
  \begin{align*}
    \| {\bv}_F - {\bv}_T \| _{\bL^2(F)}
    &=
    \| {\bv}_F -\bpi_F^k\bR_T^k\uline{\bv}_T + \bpi_F^k\bR_T^k\uline{\bv}_T- {\bv}_T  \| _{\bL^2(F)}\\
    &\leq
    \| \bpi_F^k({\bv}_F - \bR_T^k\uline{\bv}_T)\|_{\bL^2(F)} + \|\bpi_F^k\bR_T^k\uline{\bv}_T- {\bv}_T  \| _{\bL^2(F)}
    \eqcolon
    \frakI_1 + \frakI_2,
  \end{align*}
  where, in the second step, we have used triangle inequalities and then
    $\bpi_F^k \bv_F = \bv_F$ (since $\bv_F\in\Polyd{k}(F)$) in the first term.
  Squaring the above inequality, using \eqref{eq.l1l2.bound} with $n=2$ in the right hand side, and multiplying by $h_F$ both sides,
  we get
  \begin{align}
    h_F\| {\bv}_F - {\bv}_T \| _{\bL^2(F)}^2
    \lesssim
    h_F\frakI_1^2 + h_F\frakI_2^2.
    \label{ineq:fjump.RTnorm}
  \end{align}
  The
  first term above is bounded as follows:
  \begin{align}
    h_F\frakI_1^2
    \overset{\eqref{def:deltasR}}=
    h_F\|\bdelta_{\tR,TF}^k(\uline{\bv}_T)\|_{\bL^2(F)}^2
    \overset{\eqref{def:normRT}}\leq
    \|\uline{\bv}_T \|_{\tR,T}^2.
    \label{ineq:fjump.RTnorm.I1}
  \end{align}
  For the second term in \eqref{ineq:fjump.RTnorm}, we begin as follows:
  \begin{align*}
    \frakI_2
    &=
    \|\bpi_F^k\bR_T^k\uline{\bv}_T + \bpi_T^{k^\star}\bR_T^k\uline{\bv}_T- \bpi_T^{k^\star}\bR_T^k\uline{\bv}_T - {\bv}_T  \| _{\bL^2(F)}\\
    &\lesssim
    \|\bR_T^k\uline{\bv}_T \| _{\bL^2(F)}  + 
    \|\bpi_T^{k^\star}\bR_T^k\uline{\bv}_T \| _{\bL^2(F)} +
    \|\bdelta_{\tR,T}^k(\uline{\bv}_T)\| _{\bL^2(F)},
  \end{align*}
  where in the last step, we have used triangle inequalities, the $\bL^2$-boundedness of  $\bpi_F^k$, and the definition \eqref{def:deltasR} of $\bdelta_{\tR,T}^k$.  
  Denoting by $\tau_\sigma$ the simplex in $\TTs$ which contains $\sigma\in \Fhs[F]$ and using a discrete trace inequality along with $h_F\leq h_T\lesssim h_{\tau_\sigma}$ (the second bound being a consequence of mesh regularity), we get
  $$
  h_F\|\bR_T^k\uline{\bv}_T \|_{\bL^2(F)}^2
  \lesssim
  \sum_{\sigma  \in \Fhs[F]}
  h_{\tau_\sigma}\|\bR_T^k\uline{\bv}_T \| _{\bL^2(\sigma)}^2
  \lesssim
  \sum_{\sigma  \in \Fhs[F]}
  \|\bR_T^k\uline{\bv}_T \| _{\bL^2(\tau_\sigma)}^2
  \le
  \|\bR_T^k\uline{\bv}_T \| _{\bL^2(T)}^2,
  $$
  where, in the last step, we have used the fact that ${\bigcup_{\sigma \in \Fhs[F]}}\tau_\sigma\subset T$.
  Using the same process, we get
  $h_F\|\bpi_T^{k^\star}\bR_T^k\uline{\bv}_T \| _{\bL^2(F)}{^2}
  \lesssim
  \| \bpi_T^{k^\star}\bR_T^k\uline{\bv}_T    \| _{\bL^2(T)}^2
  \leq
  \| \bR_T^k\uline{\bv}_T \| _{\bL^2(T)}^2
  $,
  where we have used the $\bL^2$-boundedness of  $\bpi_T^{k^\star}$ to conclude.
  Thus,
  \begin{align*}
    h_F\frakI_2^2
    &\lesssim
    \norm{\bL^2(T)}{\bR_T^k\uline{\bv}_T}^2
    + h_F\norm{\bL^2(F)}{\bdelta_{\tR,T}^k(\uline{\bv}_T)}^2
    \lesssim
    \norm{\bL^2(T)}{\bR_T^k\uline{\bv}_T}^2
    + \norm{\bL^2(T)}{\bdelta_{\tR,T}^k(\uline{\bv}_T)}^2
    {\overset{\eqref{def:normRT}}\le\norm{\tR,T}{\uline{\bv}_T}^2},
  \end{align*}
  where, in the second step, we have used a discrete trace inequality.
  Using this bound along with \eqref{ineq:fjump.RTnorm.I1} in \eqref{ineq:fjump.RTnorm}, we get
  \begin{align}
    h_F\| {\bv}_F - {\bv}_T \| _{\bL^2(F)}^2
    \lesssim
    \|\uline{\bv}_T \|_{\tR,T}^2.
    \label{ineq:fjump.RTnorm.I2}
  \end{align}
  Recalling the definition \eqref{eq:norm.1T} of  
  $\|\uline{\bv}_T \|_{1,T}$ and
  using a discrete inverse inequality,
  we have that
  $$
  h_T^2
  \|\uline{\bv}_T \|_{1,T}^2
  {\lesssim}
  \| {\bv}_T\|_{\bL^2(T)}^2 + \sum_{F\in \calF_T} h_F\| \bv_F-\bv_T\|^2_{\bL^2(F)}
  \lesssim
  \|\uline{\bv}_T \|_{\tR,T}^2
  $$
  where, in the second step, we have used additionally the inequality $h_T \lesssim h_F$  valid for regular meshes while, in the last, step  we have used \eqref{eq:L2.hho<=RT} to bound the first term above and  \eqref{ineq:fjump.RTnorm.I2} for the second.
  Taking the square root, we conclude.
\end{proof}


\subsection{Viscous term and pressure-velocity coupling}\label{sec:discrete.problem:viscous.term}

Let an element $T\in\calT_h$ be fixed.
We define the local gradient reconstruction ${\bG_T^k}:\uline{\bU}_T^k \rightarrow {\Polyd{k}(T)}$ such that, for all $\uline{\bv}_T \in \uline{\bU}_T^k$ and all $\btau \in {\Polyd{k}(T)}$,
\[
  \int_T \bG^l_T \uline{\bv}_T: \btau
  =
  - \int_T \bv_T\cdot (\DIV \btau) + \sum_{F\in\calF_T}\int_F \bv_F\cdot\btau\bn_{TF}.
\]
A global gradient reconstruction ${\bG_h^k}:\uline{\bU}_h^k \to {\Polyd{k}(\Th)}$ can be defined setting, for all $\uline{\bv}_h\in\uline{\bU}_h^k$, $(\bG_h^k\uline{\bv}_h)_{|T}\coloneq\bG_T^k\uline{\bv}_T$ for all $T\in\calT_h$.

The viscous term and the pressure-velocity coupling are essentially the same as in the standard HHO method; see, e.g., \cite{Di-Pietro.Krell:18,Botti.Di-Pietro.ea:19}.
We briefly recall them here to make the exposition self-contained.
The viscous bilinear form $a_h$: $\uline{\bU}_h^k \times \uline{\bU}_h^k\rightarrow \mathbb{R}$ is such that, for all $\uline{\bw}_h, \uline{\bv}_h, \in \uline{\bU}_h^k$,
\begin{equation*}
  a_h(\uline{\bw}_h,\uline{\bv}_h)\coloneq
  \sum _{T \in \calT_h} \left[
  \int_T \bG_T^k \uline{\bw}_T  : \bG_T^k \uline{\bv}_T
  + s_T(\uline{\bw}_T,\uline{\bv}_T)
  \right],
\end{equation*}
where, for any $T\in\calT_h$, $s_T: \uline{\bU}_T^k \times \uline{\bU}_T^k\rightarrow \mathbb{R}$ 
denotes a local stabilization bilinear form designed according to the principles of \cite[Assumption 2.4]{Di-Pietro.Droniou:20},
 so that the following properties hold:
\begin{enumerate}[(i)]
\item \emph{Stability and boundedness.}
  There exists $C_a>0$ independent of $h$ (and, clearly, also of $\nu$) such that, for all $\uline{\bv}_h \in \uline{\bU}_{h}^k$,
  \begin{equation}\label{eq:ns:stab.h}
    C_a\| \uline{\bv}_h  \|_{1,h}^2
    \le a_h(\uline{\bv}_h,\uline{\bv}_h)
    \le C_a^{-1} \| \uline{\bv}_h  \|_{1,h}^2.
  \end{equation}
  
\item \emph{Consistency.} For all $\bw \in\bU\cap \bH^{k+2}(\calT_h)$ such that $\LAP \bw \in \Ldeuxd$, it holds
  \begin{equation}\label{eq:ns:ah:consistency}
    \|{\cal{E}}_{a,h}(\bw;\cdot)\|_{1,h,*}
    \lesssim
    h^{k+1}|\bw|_{\bH^{k+2}(\calT_h)},
  \end{equation}
  where the linear form
  ${\cal{E}}_{a,h}(\bw;\cdot):\uline{\bU}_{h,0}^k\rightarrow\mathbb{R}$
  representing the consistency error is such that
  \begin{equation}
    {\cal{E}}_{a,h}(\bw;\uline{\bv}_h)
    \coloneq 
    - \int_\Omega \LAP \bw \cdot \bv_h - a_h( \uline{\bI}_h^k\bw,\uline{\bv}_h)
    {\qquad\forall\uline{\bv}_h \in \uline{\bU}_{h,0}^k}.
    \label{eq:Eh.a}
  \end{equation}
\end{enumerate}
A classical example of stabilization bilinear form along with the proofs of properties \eqref{eq:ns:stab.h} and \eqref{eq:ns:ah:consistency} can be found in \cite[Section 5.1]{Di-Pietro.Droniou:20}, to which we refer for further details.

Recalling the definition  \eqref{eq:hho:div:op}  of the local divergence $D_T^k$, the global pressure-velocity coupling bilinear form $b_h:  \uline{\bU}_{h,0}^k \times \Poly{k}(\calT_h)\rightarrow \mathbb{R}$ is such that, for all $(\uline{\bv}_h,q_h) \in  \uline{\bU}_{h,0}^k \times \Poly{k}(\calT_h)$,
\begin{equation}\label{def:b_h}
b_h(\uline{\bv}_h,q_h):= - \sum _{T\in \calT_h} \int_T D_T^k \uline{\bv}_T\, q_T,
\end{equation}
where $q_T \coloneq q_{h|T}$. 
The properties of $b_h$ relevant for the analysis can be found in \cite[Lemma 8.12]{Di-Pietro.Droniou:20}.

\subsection{Body force}\label{subsec:div:rtn}

The discretization of the body force uses the  divergence-preserving velocity reconstruction introduced in Section \ref{sec:discrete.setting:velocity.reconstruction}.
Specifically, we define the bilinear form $\ell_h: \Ldeuxd \times \uline{\bU}_h^k \rightarrow  \mathbb{R}$ such that, for any $\boldsymbol{\phi} \in \Ldeuxd$ and any $\uline{\bv}_h \in \uline{\bU}_h^k$,
\begin{equation} \label{eq:lh:form}
  \ell_h(\boldsymbol{\phi},\uline{\bv}_h) \coloneq \int_\Omega \boldsymbol{\phi}\cdot \bR_h^k\underline{\bv}_h.
\end{equation}
Proceeding as in the proof of \cite[Lemma 4]{Castanon-Quiroz.Di-Pietro:23} (the only difference being the definition of ${\bR}_T^k$, as discussed in Remark~\ref{rem:bRTk}), we can prove the following  consistency property:
For  all $\boldsymbol{\phi} \in \Ldeuxd\cap \bH^k(\calT_h)$,
\[
\norm{1,h,*}{{\cal{E}}_{\ell,h}(\boldsymbol{\phi};\cdot)}
\lesssim
h^{k+1}\seminorm{\bH^{k}(\calT_h)}{\boldsymbol{\phi}},
\]
where the linear form ${\cal{E}}_{\ell,h}(\boldsymbol{\phi};\cdot):\uline{\bU}_{h}^k\rightarrow\mathbb{R}$, 
representing the consistency error {associated with $\ell_h$}, is such that        
\[
\mathcal{E}_{\ell,h}(\boldsymbol{\phi};\uline{\bv}_h)
\coloneq 
\ell_h( \boldsymbol{\phi},\uline{\bv}_h)
- \int_\Omega \boldsymbol{\phi} \cdot \bv_h
= \sum_{T\in \Th} \int_T \bphi \cdot  (\bR_T^k\uline{\bv}_T  - {\bv}_T).
\]

\subsection{Scalar potential operator for convective stabilization}

Recall the fact that, for a given element $T\in\Th$,  we denote by $\bx_T$ the common vertex of all simplices in $\TTs$, as well as the definitions \eqref{def:golyck} and \eqref{def:golyk} of $\Goly{{\rm c},k}(\TTs)$ and $\Goly{k}(\TTs)$ along with the respective $L^2$-orthogonal projectors $\bpi^{{\rm c},k}_{\Goly{},T}$ and $\bpi^{k}_{\Goly{},\TTs}$.
We introduce, for any polynomial degree $l\geq 0$, the potential operator $\potOp{l+1}: \Polyd{l}(\TTs)\to\Poly{l+1}(\TTs)$ such that, for all $\bq \in\Polyd{l}(\TTs)$,
\begin{equation}\label{def:rhoOp}
  \begin{gathered}
    \GRAD \potOp{l+1}\bq
    = (\Id - \bpi^{l}_{\Goly{},\TTs}\bpi^{{\rm c},{l}}_{\Goly{},\TTs} )^{-1}(\bpi^{l}_{\Goly{},\TTs}\bq -\bpi^{l}_{\Goly{},\TTs}\bpi^{{\rm c},l}_{\Goly{},\TTs}\bq )
    \\
    \text{%
      and $(\potOp{l+1}\bq)_{|\tau}(\bx_T)=0$ for all $\tau\in\TTs$,
    }
  \end{gathered}
\end{equation}
where $\Id$ is the identity operator.
Observe that, by definition \eqref{def:golyk}, 
we have $\GRAD \potOp{l+1}\bq\in\Goly{l}(\TTs)$.
The relevant properties of the potential operator $\potOp{l+1}$ are stated in Lemma \ref{lem:rhoOp} below.

\begin{lemma}[Properties of $\potOp{l+1}$]\label{lem:rhoOp}
  The potential operator $\potOp{l+1}$ defined in \eqref{def:rhoOp}
  is a linear operator with the following properties:
  \begin{enumerate}[(i)]
  \item For all $\bb\in\Goly{l}(\TTs)$ and all $\bc\in \Goly{{\rm c},l}(\TTs)$,
    \begin{equation}\label{lem:rhoOp.a}
      \GRAD\potOp{l+1}\bb = \bb
      \qquad\text{and}\qquad
      \potOp{l+1}\bc\equiv 0.      
    \end{equation}
  \item \emph{Boundedness.}
    For all $\bq\in\Polyd{l}(\TTs)$,
    \begin{align}
      \norm{\Ldeux[T]}{\potOp{l+1}\bq}
      + h_T \norm{\Ldeuxd[T]}{\GRAD \potOp{l+1}\bq}
      \lesssim h_T \norm{\Ldeuxd[T]}{\bq}.
      \label{lem:rhoOp.b}
    \end{align}
  \item For all $\bq\in\Polyd{l}(\TTs)$, it holds, 
    \begin{align}
      (\Id - \GRAD \potOp{l+1})\bq\in \Goly{{\rm c},l}(\TTs).
      \label{lem:rhoOp.d}
    \end{align}
  \item For all $\bq\in\Polyd{l}(T)$,
    \begin{align}
      \potOp{l+1}\bq\in \Poly{l+1}(T).
      \label{lem:rhoOp.e}
    \end{align}
  \end{enumerate}
\end{lemma}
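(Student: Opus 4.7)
The plan is to dispatch the well-posedness of $\potOp{l+1}$ first by showing that $\Id - T$ is boundedly invertible on $\Goly{l}(\TTs)$, where $T\coloneq\bpi^{l}_{\Goly{},\TTs}\bpi^{{\rm c},l}_{\Goly{},\TTs}$, and then to prove the four properties in the order (i)--(iii)--(ii)--(iv). For well-posedness, both projectors being $L^2$-contractions gives $\|T\|\le 1$; equality would force a nonzero element in $\Goly{l}(\TTs)\cap\Goly{{\rm c},l}(\TTs)$, contradicting the directness of \eqref{eq:kz.decomp}. A standard scaling argument on a reference configuration of the simplicial patch, valid thanks to mesh regularity and \eqref{ineq:card.IT.F}, upgrades this to a uniform strict inequality, so $(\Id-T)^{-1}$ is uniformly bounded on $\Goly{l}(\TTs)$.

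Property (i) then follows by reading off the definition. For $\bb\in\Goly{l}(\TTs)$ we have $\bpi^{l}_{\Goly{},\TTs}\bb=\bb$, so the argument of $(\Id-T)^{-1}$ equals $(\Id-T)\bb$, giving $\GRAD\potOp{l+1}\bb=\bb$; for $\bc\in\Goly{{\rm c},l}(\TTs)$ the argument vanishes, so $\potOp{l+1}\bc$ is piecewise constant on $\TTs$ and is forced to zero by the normalization at $\bx_T$. Property (iii) then follows at once by decomposing $\bq=\bq_G+\bq_{Gc}$ via \eqref{eq:kz.decomp} and invoking (i) linearly. For property (ii), the gradient estimate in \eqref{lem:rhoOp.b} is immediate from uniform boundedness of $(\Id-T)^{-1}$ together with $L^2$-contractivity of the two projectors. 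For the $L^2$-bound on $\potOp{l+1}\bq$ itself, I would use a Poincar\'e-type inequality on each simplex: since $(\potOp{l+1}\bq)_{|\tau}\in\Poly{l+1}(\tau)$ vanishes at the vertex $\bx_T$, finite-dimensional equivalence of norms on a reference simplex together with standard scaling yields $\norm{L^2(\tau)}{\potOp{l+1}\bq}\lesssim h_\tau\norm{\bL^2(\tau)}{\GRAD\potOp{l+1}\bq}$, and summing over $\tau\in\TTs$ with $h_\tau\simeq h_T$ closes the estimate.

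The main obstacle is property (iv), which is the only part requiring a genuinely geometric observation rather than a formal manipulation. Given $\bq\in\Polyd{l}(T)$ (a polynomial on the whole cell, not merely a broken one), I would decompose $\bq=\bq_G+\bq_{Gc}$ via \eqref{eq:kz.decomp} \emph{applied on the single cell $T$}, so that $\bq_{Gc}\in\Goly{{\rm c},l}(T)\subset\Goly{{\rm c},l}(\TTs)$ and property (i) already gives $\potOp{l+1}\bq_{Gc}=0$. For the gradient part, write $\bq_G=\GRAD\phi$ with $\phi\in\Poly{l+1}(T)$; property (i) gives $\GRAD\potOp{l+1}\bq_G=\GRAD\phi$ on every $\tau\in\TTs$, so the difference $(\potOp{l+1}\bq_G-\phi)_{|\tau}$ is a simplex-wise constant. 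The normalization pins each of these constants down to $-\phi(\bx_T)$, and crucially $\phi(\bx_T)$ is a single unambiguous number because $\phi$ is a polynomial on the whole of $T$; hence the simplex-wise constants coincide and $\potOp{l+1}\bq_G=\phi-\phi(\bx_T)\in\Poly{l+1}(T)$ globally on $T$. The delicate point is precisely this: although $\potOp{l+1}$ is defined piecewise on $\TTs$ with no continuity enforced across simplex interfaces, the point-value normalization at the single shared vertex $\bx_T$ is enough to glue the simplex-wise pieces into a global polynomial whenever the input is.
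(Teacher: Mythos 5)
Your proof is correct and follows the paper's architecture for parts (i), (ii) and (iv), but it is organized more economically in two places. First, for well-posedness you restrict $\Id-\bpi^{l}_{\Goly{},\TTs}\bpi^{{\rm c},l}_{\Goly{},\TTs}$ to its invariant subspace $\Goly{l}(\TTs)$, which immediately yields $\GRAD\potOp{l+1}\bq\in\Goly{l}(\TTs)$ (and hence the existence of a potential); the paper obtains the same membership by a Neumann-series expansion of $\GammaGT{l}$ (Lemma \ref{lem:rec.golygammas}(ii)). Second, and more substantially, you derive (iii) by splitting $\bq=\bq_G+\bq_{Gc}$ via \eqref{eq:kz.decomp} and applying (i) linearly, which gives $(\Id-\GRAD\potOp{l+1})\bq=\bq_{Gc}$ outright; the paper instead routes this through the recovery operator $\RecGT{l}$ and the mapping property of $\GammacGT{l}$. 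Your route is shorter and even identifies the element explicitly, at the price of not reusing the recovery-operator toolbox that the paper has already set up for other purposes. Parts (ii) and (iv) coincide with the paper's arguments: the simplex-wise Poincar\'e inequality exploiting the vertex normalization at $\bx_T$ (the paper's Lemma \ref{lem:poincare.simplex}), and the observation that the single shared vertex glues the simplex-wise constants of integration into one global constant.

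One point deserves more care: your claim that a ``scaling argument on a reference configuration of the simplicial patch'' yields the \emph{uniform} bound $\norm{}{(\Id-T)^{-1}}\lesssim 1$ is not quite an argument, since the patches $\TTs$ vary in cardinality and shape and do not map to a single reference configuration. The correct reduction, which the paper makes explicit in the proof of Lemma \ref{lem:rec.golygammas}(iv), is that both projectors act block-diagonally simplex by simplex, so the operator norm of $\bpi^{l}_{\Goly{},\TTs}\bpi^{{\rm c},l}_{\Goly{},\TTs}$ over the patch is the maximum of the per-simplex norms, and the uniform strict contraction then follows from a reference-element argument on a single shape-regular simplex (as in the cited \cite[Lemma 2]{Di-Pietro.Droniou:21}). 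This is a one-line fix, not a structural flaw, but as stated your uniformity claim is unsupported and the hidden constant in \eqref{lem:rhoOp.b} depends on it.
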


The proof of Lemma \ref{lem:rhoOp} requires some preliminary results.
We start by introducing the \emph{recovery operator} (see \cite[Section 2.5]{Di-Pietro.Droniou:21}) $\RecGT{l}:\Goly{l}(\TTs)\times\Goly{{\rm c},l}(\TTs)\rightarrow \Polyd{l}(\TTs)$ defined by
\begin{equation}\label{def:recoveryop}
  \RecGT{l}(\bb,\bc)\coloneq
  \GammaGT{l}(\bb-\bpi^{l}_{\Goly{},\TTs}(\bc))
  +
  \GammacGT{l} (\bc-\bpi^{{\rm c},l}_{\Goly{},\TTs}(\bb)),
\end{equation}
where  $\GammaGT{l}$ and $\GammacGT{l}$ are linear functions $\Polyd{l}(\TTs)\rightarrow \Polyd{l}(\TTs)$ defined by 
\begin{equation}\label{def:golygammas}
  \text{
    $\GammaGT{l} \coloneq (\Id - \bpi^{l}_{\Goly{},\TTs}\bpi^{{\rm c},l}_{\Goly{},\TTs} )^{-1}$
    \qquad and\qquad
    $\GammacGT{l} \coloneq (\Id - \bpi^{{\rm c},l}_{\Goly{},\TTs}\bpi^{l}_{\Goly{},\TTs} )^{-1}$.
  }
\end{equation}
  By definition,
  $\GRAD \potOp{l+1}\bq
  = \GammaGT{l}(\bpi^{l}_{\Goly{},\TTs}\bq -\bpi^{l}_{\Goly{},\TTs}\bpi^{{\rm c},l}_{\Goly{},\TTs}\bq )$.

\begin{lemma}[Properties of $\RecGT{l}$, $\GammaGT{l}$, and $\GammacGT{l}$]
  \label{lem:rec.golygammas}
  The following holds:
  \begin{enumerate}[(i)]

  \item  For all $\bq\in\Polyd{l}(\TTs)$,
    \begin{equation}\label{lem:rec.golygammas.eq.a}
      \bq =\RecGT{l} (\bpi^{l}_{\Goly{},\TTs}\bq,\bpi^{{\rm c},l}_{\Goly{},\TTs}\bq).
    \end{equation}

  \item  For all $\bb\in\Goly{l}(\TTs), \bc\in\Goly{{\rm c},l}(\TTs)$, and 
    $\bq\in\Polyd{l}(\TTs)$,
    \begin{equation}\label{lem:rec.golygammas.eq.b}
      \text{%
        $\GammaGT{l}(\bb-\bpi^{l}_{\Goly{},\TTs}\bq)\in\Goly{l}(\TTs)$
        \qquad and\qquad
        $\GammacGT{l}(\bc-\bpi^{{\rm c},l}_{\Goly{},\TTs}\bq)\in\Goly{{\rm c},l}(\TTs)$.
      }
    \end{equation}

  \item  For all $\bb\in\Goly{l}(\TTs)$, and $ \bc\in\Goly{{\rm c},l}(\TTs)$,
    \begin{equation}\label{lem:rec.golygammas.eq.bb}
      \text{%
        $\GammaGT{l}(\bb-\bpi^{l}_{\Goly{},\TTs}\bpi^{{\rm c},l}_{\Goly{},\TTs}(\bb))=\bb$
        \qquad and\qquad
        $\GammacGT{l}(\bc-\bpi^{{\rm c},l}_{\Goly{},\TTs}\bpi^{l}_{\Goly{},\TTs}(\bc))=\bc$.
      }
    \end{equation}

  \item For all $\bq\in\Polyd{l}(\TTs)$,
    \begin{equation}\label{lem:rec.golygammas.eq.c}
      \text{%
        $\norm{\Ldeuxd[T]}{\GammaGT{l}\bq}\lesssim \norm{\Ldeuxd[T]}{\bq}$
        \qquad and\qquad
        $\norm{\Ldeuxd[T]}{\GammacGT{l}\bq}\lesssim \norm{\Ldeuxd[T]}{\bq}$.
      }
    \end{equation}
  \end{enumerate}
\end{lemma}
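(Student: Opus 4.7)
The four items are naturally ordered by increasing difficulty as (iii), (ii), (i), (iv). I would treat them in this order. Before anything, I would verify that $\GammaGT{l}$ and $\GammacGT{l}$ are well-defined, that is, that $\Id - \bpi^{l}_{\Goly{},\TTs}\bpi^{{\rm c},l}_{\Goly{},\TTs}$ and $\Id - \bpi^{{\rm c},l}_{\Goly{},\TTs}\bpi^{l}_{\Goly{},\TTs}$ are invertible on the finite-dimensional space $\Polyd{l}(\TTs)$, for which injectivity suffices. If $\bq = \bpi^{l}_{\Goly{},\TTs}\bpi^{{\rm c},l}_{\Goly{},\TTs}\bq$, then $\bq\in\Goly{l}(\TTs)$, and testing this relation against $\bq$ and using that $\bpi^{l}_{\Goly{},\TTs}$ is an $L^2$-orthogonal projector yields $\|\bq\|^2 = \|\bpi^{{\rm c},l}_{\Goly{},\TTs}\bq\|^2$, which combined with $\bq\in\Goly{l}(\TTs)$ and the direct sum \eqref{eq:kz.decomp} forces $\bq=\bzero$. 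Item (iii) then follows immediately: for $\bb\in\Goly{l}(\TTs)$, $(\Id-\bpi^{l}_{\Goly{},\TTs}\bpi^{{\rm c},l}_{\Goly{},\TTs})\bb = \bb - \bpi^{l}_{\Goly{},\TTs}\bpi^{{\rm c},l}_{\Goly{},\TTs}\bb$, and applying $\GammaGT{l}$ returns $\bb$; the argument for the second identity in (iii) is symmetric.

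For (ii), I would first observe that $\GammaGT{l}$ preserves $\Goly{l}(\TTs)$: for $\ba\in\Goly{l}(\TTs)$, $\bx\coloneq\GammaGT{l}\ba$ satisfies $\bx = \ba + \bpi^{l}_{\Goly{},\TTs}\bpi^{{\rm c},l}_{\Goly{},\TTs}\bx$, a sum of two elements of $\Goly{l}(\TTs)$. Since $\bb-\bpi^{l}_{\Goly{},\TTs}\bq\in\Goly{l}(\TTs)$ whenever $\bb\in\Goly{l}(\TTs)$, the first assertion of \eqref{lem:rec.golygammas.eq.b} follows; the second is symmetric. For (i), I would decompose $\bq=\bb+\bc$ via \eqref{eq:kz.decomp}, expand the projections as $\bpi^{l}_{\Goly{},\TTs}\bq = \bb + \bpi^{l}_{\Goly{},\TTs}\bc$ and $\bpi^{{\rm c},l}_{\Goly{},\TTs}\bq = \bpi^{{\rm c},l}_{\Goly{},\TTs}\bb + \bc$, and check that the cross terms cancel to give
\[
\bpi^{l}_{\Goly{},\TTs}\bq - \bpi^{l}_{\Goly{},\TTs}\bpi^{{\rm c},l}_{\Goly{},\TTs}\bq = \bb - \bpi^{l}_{\Goly{},\TTs}\bpi^{{\rm c},l}_{\Goly{},\TTs}\bb,
\]
and similarly for the other term. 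Applying (iii) and its counterpart then collapses $\RecGT{l}(\bpi^{l}_{\Goly{},\TTs}\bq,\bpi^{{\rm c},l}_{\Goly{},\TTs}\bq)$ to $\bb+\bc=\bq$.

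The main obstacle is (iv), because the hidden constant must be independent of $h$. Since $\Goly{l}(\TTs)$, $\Goly{{\rm c},l}(\TTs)$, and both projectors split as direct sums over the simplices $\tau\in\TTs$, it is enough to bound $\GammaGT{l}$ simplex by simplex. On a single simplex $\tau$, the injectivity argument above together with finite-dimensionality yields $\rho_\tau \coloneq \|\bpi^{l}_{\Goly{},\tau}\bpi^{{\rm c},l}_{\Goly{},\tau}\|_{\mathrm{op}} < 1$. Uniformity is obtained by a standard scaling to a reference simplex, exploiting the shape-regularity of $\Ths$ inherited from the regular mesh sequence $(\Mh)_h$ and the clean pull-back of the polynomial spaces $\Goly{l}$ and $\Goly{{\rm c},l}$ under affine maps: this gives a bound $\rho_\tau \le \rho < 1$ depending only on $l$ and the mesh regularity parameter. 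The Neumann series $\GammaGT{l} = \sum_{n\ge 0}\bigl(\bpi^{l}_{\Goly{},\TTs}\bpi^{{\rm c},l}_{\Goly{},\TTs}\bigr)^n$ then converges with $\|\GammaGT{l}\|_{\mathrm{op}} \le (1-\rho)^{-1}$, and the bound for $\GammacGT{l}$ follows analogously.
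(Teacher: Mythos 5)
Your handling of items (i)--(iii) is correct and in fact more self-contained than the paper's. The paper proves (ii) and (iii) by expanding $\GammaGT{l}$ and $\GammacGT{l}$ as Neumann series $\sum_{n\ge0}\bigl(\bpi^{l}_{\Goly{},\TTs}\bpi^{{\rm c},l}_{\Goly{},\TTs}\bigr)^n$, invoking an external lemma of Di Pietro--Droniou for the operator-norm bound strictly below $1$ that makes the series converge, and it simply cites that same lemma for (i). Your route is purely algebraic: injectivity of $\Id-\bpi^{l}_{\Goly{},\TTs}\bpi^{{\rm c},l}_{\Goly{},\TTs}$ via the Pythagorean identity for the orthogonal projector combined with the direct sum \eqref{eq:kz.decomp}; item (iii) as an immediate consequence of $\GammaGT{l}$ being the inverse; item (ii) from the fixed-point identity $\bx=\ba+\bpi^{l}_{\Goly{},\TTs}\bpi^{{\rm c},l}_{\Goly{},\TTs}\bx$; and item (i) from the cancellation of cross terms after decomposing $\bq=\bb+\bc$. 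This buys you a proof of (i) from scratch and replaces the paper's computation in (ii) (where each partial sum of the series is shown to lie in the closed subspace $\Goly{l}(\TTs)$) with a two-line identity. Everything here checks out.

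For (iv) your structure coincides with the paper's: both localize the projectors to the simplices of $\TTs$ and reduce to a uniform bound on $(\Id-\bpi^{l}_{\Goly{},\tau}\bpi^{{\rm c},l}_{\Goly{},\tau})^{-1}$ per simplex; the paper outsources that bound to the cited reference, while you sketch its proof. One claim in your sketch is not right as stated: the spaces $\Goly{l}(\tau)=\GRAD\Poly{l+1}(\tau)$ and $\Goly{{\rm c},l}(\tau)$ do \emph{not} pull back cleanly under a general affine map $\hat{\bx}\mapsto A\hat{\bx}+b$. Both are carried onto the images of the reference spaces under multiplication by $A^{-\intercal}$ (up to scalars), but since $A^{-\intercal}$ is not orthogonal this changes the $\bL^2$ geometry, so $\bpi^{l}_{\Goly{},\tau}\bpi^{{\rm c},l}_{\Goly{},\tau}$ is not conjugate to its reference-simplex counterpart and a single reference-element computation does not give a uniform $\rho<1$. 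The spaces \emph{are} invariant under translations and dilations, so the standard repair is to normalize to unit diameter with $\bx_T$ at the origin and then use continuity of $\tau\mapsto\norm{}{\bpi^{l}_{\Goly{},\tau}\bpi^{{\rm c},l}_{\Goly{},\tau}}$ together with compactness of the set of shape-regular simplices so normalized; this is essentially the content of the lemma the paper cites. With that repair (or simply with the citation) your argument for (iv) is complete.
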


\begin{proof}
  \noindent\underline{(i-ii)} The identity
  \eqref{lem:rec.golygammas.eq.a} is a straightforward consequence of \cite[Lemma 1]{Di-Pietro.Droniou:21}.
  We now proceed to  prove the first relation in \eqref{lem:rec.golygammas.eq.b}. The second one follows using entirely similar arguments.
  By \cite[Lemma 1]{Di-Pietro.Droniou:21}, it holds $\norm{\calL(\Polyd{l}(\TTs),\Polyd{l}(\TTs))}{\bpi^{l}_{\Goly{},\TTs}\bpi^{{\rm c},l}_{\Goly{},\TTs}}<1$ for the norm induced by the $\bL^2(T)$-norm, so  we have the  following well-defined expansion:
  \begin{align*}
    \GammaGT{l}(\bb-\bpi^{l}_{\Goly{},\TTs}\bq)
    &=
    \sum_{n=0}^\infty \left({\bpi^{l}_{\Goly{},\TTs}\bpi^{{\rm c},l}_{\Goly{},\TTs}}\right)^n(\bb-\bpi^{l}_{\Goly{},\TTs}\bq)\\
    &=
    (\bb-\bpi^{l}_{\Goly{},\TTs}\bq)
    +
    \left\{\sum_{n=1}^\infty \left({\bpi^{l}_{\Goly{},\TTs}\bpi^{{\rm c},l}_{\Goly{},\TTs}}\right)^n(\bb-\bpi^{l}_{\Goly{},\TTs}\bq)\right\}
    \eqcolon \frakT_1 + \frakT_2.
  \end{align*}
  Since $\bpi^{l}_{\Goly{},\TTs}$ is a projector into the space $\Goly{l}(\TTs)$,  we have that $\frakT_1\in\Goly{l}(\TTs)$. 
  Observe that  the infinite sum contained in $\frakT_2$ is well-defined and, to compute it, let $\widetilde{\bq}\coloneq\bb-\bpi^{l}_{\Goly{},\TTs}\bq\in \Goly{l}(\TTs)$ and write
  \begin{align*}
    \frakT_2
    &=
    \sum_{n=1}^\infty \left({\bpi^{l}_{\Goly{},\TTs}\bpi^{{\rm c},l}_{\Goly{},\TTs}}\right)^n\widetilde{\bq}
    =
    \sum_{n=1}^\infty {\bpi^{l}_{\Goly{},\TTs}\bpi^{{\rm c},l}_{\Goly{},\TTs}}\left({\bpi^{l}_{\Goly{},\TTs}\bpi^{{\rm c},l}_{\Goly{},\TTs}}\right)^{n-1}\widetilde{\bq}\\
    &=
    \sum_{n=1}^\infty \bpi^{l}_{\Goly{},\TTs}\bpi^{l}_{\Goly{},\TTs}\bpi^{{\rm c},l}_{\Goly{},\TTs}\left({\bpi^{l}_{\Goly{},\TTs}\bpi^{{\rm c},l}_{\Goly{},\TTs}}\right)^{n-1}\widetilde{\bq}
    =
    \sum_{n=1}^\infty \bpi^{l}_{\Goly{},\TTs}\left(\left({\bpi^{l}_{\Goly{},\TTs}\bpi^{{\rm c},l}_{\Goly{},\TTs}}\right)^{n}\widetilde{\bq}\right),
  \end{align*}
  where in the third step we have used the idempotency property $\bpi^{l}_{\Goly{},\TTs}=\bpi^{l}_{\Goly{},\TTs} \circ \bpi^{l}_{\Goly{},\TTs}$.
Thus, $\frakT_2$ is equal to a convergent infinite sum
 where each term of this sum is in $\Goly{l}(\TTs)$, but this subspace is closed (since it is finite dimensional), thus
the limit of the partial sum is in $\Goly{l}(\TTs)$, i.e., $\frakT_2 \in \Goly{l}(\TTs)$.
  Therefore, $\frakT_1 + \frakT_2 \in \Goly{l}(\TTs)$ and we conclude.%
  \medskip\\
  \noindent\underline{(iii)} We now proceed to prove the first relation in \eqref{lem:rec.golygammas.eq.bb}. The second one is proved similarly.
  For any $\bb\in\Goly{l}(\TTs)$, expanding $\GammaGT{l}$ as in the previous point, we have that
  \begin{align*}
    \GammaGT{l}(\bb& -\bpi^{l}_{\Goly{},\TTs}\bpi^{{\rm c},l}_{\Goly{},\TTs}\bb)
    =
    \sum_{n=0}^\infty \left(\bpi^{l}_{\Goly{},\TTs}\bpi^{{\rm c},l}_{\Goly{},\TTs}\right)^n
    (\bb-\bpi^{l}_{\Goly{},\TTs}\bpi^{{\rm c},l}_{\Goly{},\TTs}\bb)\\
    &=
    \sum_{n=0}^\infty \left({\bpi^{l}_{\Goly{},\TTs}\bpi^{{\rm c},l}_{\Goly{},\TTs}}\right)^n
    \bb
    -
    \sum_{n=0}^\infty \left({\bpi^{l}_{\Goly{},\TTs}\bpi^{{\rm c},l}_{\Goly{},\TTs}}\right)^n
    \bpi^{l}_{\Goly{},\TTs}\bpi^{{\rm c},l}_{\Goly{},\TTs}\bb\\
    &=
    \bb
    + \cancel{%
      \left[
      \sum_{n=1}^\infty \left(
      \bpi^{l}_{\Goly{},\TTs}\bpi^{{\rm c},l}_{\Goly{},\TTs}
      \right)^n
      \bb
      - \sum_{n=0}^\infty \left({\bpi^{l}_{\Goly{},\TTs}\bpi^{{\rm c},l}_{\Goly{},\TTs}}\right)^{n+1}
      \bb
      \right]
    } =\bb.
  \end{align*}
  \\
  \noindent\underline{(iv)} We prove the first relation in \eqref{lem:rec.golygammas.eq.c}.
  The second one follows from a similar reasoning.
  Observe first that, for all $\bq\in\Polyd{l}(\TTs)$, we can express $\bpi^{l}_{\Goly{},\TTs}\bq$   as 
  $\bpi^{l}_{\Goly{},\TTs}\bq=\sum_{\tau\in\TTs}\chi_{\tau}\bpi^{l}_{\Goly{},\tau}\bq_{|\tau}$, where $\chi_{\tau}$ is the characteristic function of  $\tau\in \TTs$;
  similarly, we have
  $\bpi^{{\rm c}, l}_{\Goly{},\TTs}\bq=\sum_{\tau\in\TTs}\chi_{\tau}\bpi^{{\rm c}, l}_{\Goly{},\tau}\bq_{|\tau}$.
  Thus,
  \begin{multline*}
    \bpi^{l}_{\Goly{},\TTs}\bpi^{{\rm c}, l}_{\Goly{},\TTs}\bq
    =
    \bpi^{l}_{\Goly{},\TTs}\left(
    \sum_{\tau\in\TTs}\chi_{\tau}\bpi^{{\rm c}, l}_{\Goly{},\tau}\bq_{|\tau}
    \right)
    \\
    =
    \sum_{\tau^\prime\in\TTs}\chi_{\tau^\prime}\bpi^{l}_{\Goly{},\tau^\prime}\left(
    \sum_{\tau\in\TTs}\chi_{\tau}\bpi^{{\rm c}, l}_{\Goly{},\tau}\bq_{|\tau}
    \right)_{|\tau^\prime}
    =
    \sum_{\tau\in\TTs}\chi_{\tau}\bpi^{l}_{\Goly{},\tau}\bpi^{{\rm c}, l}_{\Goly{},\tau}\bq_{|\tau},
  \end{multline*}
  and, using the same procedure recursively, we obtain
  $$
  \left(\bpi^{l}_{\Goly{},\TTs}\bpi^{{\rm c}, l}_{\Goly{},\TTs}\right)^n\bq
  =
  \sum_{\tau\in\TTs}\chi_{\tau}\left(\bpi^{l}_{\Goly{},\tau}\bpi^{{\rm c}, l}_{\Goly{},\tau}\right)^n\bq_{|\tau},
  $$
  thus it is inferred that
  \[
  \GammaGT{l}\bq
  =
  \sum_{n=0}^\infty \left({\bpi^{l}_{\Goly{},\TTs}\bpi^{{\rm c},l}_{\Goly{},\TTs}}\right)^n\bq
  =
  \sum_{n=0}^\infty 
  \sum_{\tau\in\TTs}\chi_{\tau}\left(\bpi^{l}_{\Goly{},\tau}\bpi^{{\rm c}, l}_{\Goly{},\tau}\right)^n\bq_{|\tau}
  =
  \sum_{\tau\in\TTs}
  \chi_{\tau}
  \sum_{n=0}^\infty 
  \left(\bpi^{l}_{\Goly{},\tau}\bpi^{{\rm c}, l}_{\Goly{},\tau}\right)^n\bq_{|\tau},\\
  \]
  where, in the last step, we have used Fubini's theorem to exchange the order of the sums
  and the fact that the infinite expansion of  $\GammaGT{l}$ is well-defined.
  Now, using a triangle inequality and the fact that $|\chi_{\tau}(\bx)|= 1$
  for all $\bx \in \tau$, we get
  \begin{align*}
    \norm{\Ldeuxd[T]}{\GammaGT{l}\bq}
    &\leq
    \sum_{\tau\in\TTs}
    \bigg\|
    \sum_{n=0}^\infty 
    \left(\bpi^{l}_{\Goly{},\tau}\bpi^{{\rm c}, l}_{\Goly{},\tau}\right)^n\bq_{|\tau}
    \bigg \|_{\Ldeuxd[\tau]}\\
    &\lesssim
    \sum_{\tau\in\TTs}
    \norm{\Ldeuxd[\tau]}{\bq_{|\tau}}
    \leq
    \sum_{\tau\in\TTs}
    \norm{\Ldeuxd[T]}{\bq}
    \overset{\eqref{ineq:card.IT.F}}\lesssim
    \norm{\Ldeuxd[T]}{\bq},
  \end{align*}
  where we have used the inequality $\norm{\calL(\Polyd{l}(\tau),\Polyd{l}(\tau))}{(\Id -\bpi^{l}_{\Goly{},\tau}\bpi^{{\rm c}, l}_{\Goly{},\tau})^{-1}}\lesssim 1$ (see \cite[Proof of Lemma 2]{Di-Pietro.Droniou:21}) in the second step and
  the fact that $\tau\subset T$ for $\tau\in\TTs$ in third step.
\end{proof}

The second intermediate result needed in the proof of Lemma \ref{lem:rhoOp} is the following local Poincar\'e inequality.

\begin{lemma}[Poincaré inequality over a simplex]\label{lem:poincare.simplex}
  Let a mesh element $T\in \Th$, a simplex $\tau\in\TTs$, and  integer $l\geq0$ be given.
  Then, for any $q\in\Poly{l}(\tau)$ vanishing at one of the vertices of $\tau$, the following holds:
  \begin{equation}\label{ineq:poincare.tau}
    \norm{\Ldeux[\tau]}{q}
    \lesssim
    h_\tau
    \norm{\Ldeuxd[\tau]}{\GRAD q}.
  \end{equation}
\end{lemma}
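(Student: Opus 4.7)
The plan is to reduce the inequality to an equivalent estimate on a reference simplex via an affine change of variables, where the claim follows from a standard finite-dimensional norm-equivalence argument. The power of $h_\tau$ then arises naturally from the Jacobian and the chain rule.

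More precisely, fix a reference simplex $\hat\tau\subset\Real^d$ (say the unit simplex) with vertices $\hat v_0,\dots,\hat v_d$, and let $F_\tau:\hat\tau\to\tau$ be the affine bijection that maps $\hat v_i$ to the vertices of $\tau$ in such a way that $\hat v_0$ is mapped to the vertex $\bx_\tau$ at which $q$ vanishes. Setting $\hat q\coloneq q\circ F_\tau$, one has $\hat q\in\Poly{l}(\hat\tau)$ with $\hat q(\hat v_0)=0$. By the standard change of variables together with the chain rule $\GRAD q = (DF_\tau)^{-\intercal}\widehat{\GRAD}\hat q$, mesh regularity gives
\begin{equation*}
  \norm{\Ldeux[\tau]}{q}^2 \simeq h_\tau^d\,\norm{\Ldeux[\hat\tau]}{\hat q}^2
  \qquad\text{and}\qquad
  \norm{\Ldeuxd[\tau]}{\GRAD q}^2 \simeq h_\tau^{d-2}\,\norm{\Ldeuxd[\hat\tau]}{\widehat\GRAD \hat q}^2,
\end{equation*}
where the hidden constants depend only on $d$ and the mesh regularity parameter.

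The core of the argument is the reference estimate $\norm{\Ldeux[\hat\tau]}{\hat q}\lesssim \norm{\Ldeuxd[\hat\tau]}{\widehat\GRAD \hat q}$, which I would prove as follows. Consider the finite-dimensional subspace
\begin{equation*}
  V\coloneq\left\{r\in\Poly{l}(\hat\tau) : r(\hat v_0)=0\right\}.
\end{equation*}
On $V$, the map $r\mapsto \norm{\Ldeuxd[\hat\tau]}{\widehat\GRAD r}$ is a norm: indeed, if $\widehat\GRAD r\equiv 0$ then $r$ is constant on the connected set $\hat\tau$, and combined with $r(\hat v_0)=0$ this forces $r\equiv 0$. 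Since $V$ is finite-dimensional, all norms are equivalent, yielding $\norm{\Ldeux[\hat\tau]}{r}\lesssim \norm{\Ldeuxd[\hat\tau]}{\widehat\GRAD r}$ for all $r\in V$, with a constant depending only on $l$ and $d$. Taking the maximum of the $d+1$ constants arising from the possible choices of vanishing vertex (which are finitely many) yields a single uniform constant, independent of which vertex of $\tau$ was singled out.

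Combining the scaling identities with the reference inequality then gives $\norm{\Ldeux[\tau]}{q}^2 \lesssim h_\tau^d\,\norm{\Ldeuxd[\hat\tau]}{\widehat\GRAD\hat q}^2 \simeq h_\tau^2\,\norm{\Ldeuxd[\tau]}{\GRAD q}^2$, and taking square roots concludes. The only subtlety to be careful about is the uniformity of the hidden constants with respect to $\tau$, which is guaranteed by mesh regularity of the submesh $\Ths$ inherited from \cite[Definition 1.9]{Di-Pietro.Droniou:20} — this is the step where one has to keep track of the dependencies, but it is entirely standard.
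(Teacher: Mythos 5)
Your proof is correct, but it follows a genuinely different route from the paper's. You use the classical reference-element scaling argument: pull $q$ back to a fixed reference simplex by an affine map sending a reference vertex to the vanishing vertex, establish the Poincar\'e inequality there by finite-dimensional norm equivalence on the subspace $\{r\in\Poly{l}(\hat\tau):r(\hat v_0)=0\}$ (where $r\mapsto\norm{\Ldeuxd[\hat\tau]}{\widehat\GRAD r}$ is indeed a norm), and recover the factor $h_\tau$ from the Jacobian scalings $\norm{\Ldeux[\tau]}{q}^2\simeq h_\tau^d\norm{\Ldeux[\hat\tau]}{\hat q}^2$ and $\norm{\Ldeuxd[\tau]}{\GRAD q}^2\simeq h_\tau^{d-2}\norm{\Ldeuxd[\hat\tau]}{\widehat\GRAD\hat q}^2$, both uniform under shape regularity of the submesh. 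The paper instead argues entirely on the physical simplex: for $l=1$ it writes $q(\bx)=\sum_i a_i(x_i-x_{\tau,i})$ and bounds $\norm{\Ldeux[\tau]}{q}$ explicitly, and for $l>1$ it reduces to the affine case via the nodal interpolant onto $\Poly{1}(\tau)$, using the interpolation error estimate combined with an inverse inequality ($\norm{\Ldeux[\tau]}{q-\hat q}\lesssim h_\tau^2\seminorm{H^2(\tau)}{q}\lesssim h_\tau\seminorm{H^1(\tau)}{q}$). Your approach treats all degrees $l$ uniformly in a single step and makes the dependence of the constant on $l$, $d$, and the mesh regularity parameter completely transparent through the bounds on $DF_\tau$ and $DF_\tau^{-1}$; the paper's approach avoids reference-element machinery at the cost of invoking nodal-interpolation estimates and an inverse inequality. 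One minor remark: since the affine map can be chosen to send the distinguished reference vertex to whichever vertex of $\tau$ carries the zero, you do not actually need to maximize over $d+1$ constants — a single reference subspace suffices — but this is harmless.
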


\begin{proof}
  The case $l=0$ is trivial. Then we separate the remaining cases
  when $l=1$, and $l>1$. We begin with $l=1$.
  By assumption, there exists a vertex  of $\tau$ denoted by $\bx_\tau\in\Real^d$
  such that $q(\bx_\tau)=0$. 
  Thus, $q$ can be expressed as  $q(\bx)=\sum_{i=1}^d a_i( x_i - x_{\tau,i})$
  where $a_i\in\Real$, and  $x_i$ and $x_{\tau,i}$ are the Cartesian coordinates of $\bx$, and  $\bx_\tau$, respectively.
  We write
  \[
    \norm{\Ldeux[\tau]}{q}^2
    =
    \int_\tau\left(\sum_{i=1}^d a_i( x_i - x_{\tau,i}) \right)^2 
    \overset{\eqref{eq.l1l2.bound}}\lesssim
    \sum_{i=1}^d\int_\tau a_i^2( x_i - x_{\tau,i})^2 
    \leq
    h_\tau^2\sum_{i=1}^d\int_\tau a_i^2 
    =
    h_\tau
    \norm{\Ldeuxd[\tau]}{\GRAD q}^2.
  \]
  Passing to the square root, we get \eqref{ineq:poincare.tau}.
  Now, to prove the case $l>1$, let $q\in\Poly{l}(\tau)$ and denote by $\hat{q}$ the standard nodal interpolate of $q$ on $\Poly{1}(\tau)$ (see, e.g., \cite[Eq. (1.36)]{Ern.Guermond:04}).
  Then, it is inferred that
  \[ 
    \norm{\Ldeux[\tau]}{q}
    {\le}
    \norm{\Ldeux[\tau]}{q - \hat{q} }
    + \norm{\Ldeux[\tau]}{\hat{q} }
    \lesssim
    h_\tau\seminorm{H^1(\tau)}{q}
    +
    \norm{\Ldeux[\tau]}{\hat{q} }
    \lesssim
    h_\tau\norm{\Ldeuxd[\tau]}{\GRAD q},
  \] 
  where in the first step we have used a triangle inequality,
  in the second step standard approximation properties of $\hat{q}$ (\cite[Theorem (1.103)]{Ern.Guermond:04}) followed by an inverse inequality to write $\norm{\Ldeux[\tau]}{q - \hat{q} } \lesssim h_\tau^2 \seminorm{H^2(\tau)}{q} \lesssim h_\tau \seminorm{H^1(\tau)}{q}$,
  and in the last step the fact that $\hat{q}\in\Poly{1}(\tau)$
  and $\hat{q}(\bx_\tau)=0$, so we can use \eqref{ineq:poincare.tau}, since this case has already been  proved.
\end{proof}

\begin{proof}[Proof of Lemma \ref{lem:rhoOp}]
  We first prove that $\potOp{l+1}$ is well defined.
  By the definitions \eqref{def:rhoOp} of $\potOp{l+1}$ and \eqref{def:golygammas} of ${\bGamma}^l_{\Goly{},\TTs}$, it holds, for all $\bq\in \Polyd{l}(\TTs)$, 
  \begin{equation}\label{eq:rhoOp.Gamma}
    \GRAD \potOp{l+1} \bq={\bGamma}^l_{\Goly{},\TTs}
    (\bpi^{l}_{\Goly{},\TTs}\bq -\bpi^{l}_{\Goly{},\TTs}\bpi^{{\rm c},l}_{\Goly{},\TTs}\bq).
  \end{equation}
  By the first equation of \eqref{lem:rec.golygammas.eq.b}, 
  ${\bGamma}^l_{\Goly{},\TTs} (\bpi^{l}_{\Goly{},\TTs}\bq -\bpi^{l}_{\Goly{},\TTs}\bpi^{{\rm c},l}_{\Goly{},\TTs}\bq)\eqcolon\bg \in \Goly{l}(\TTs)$.
  Thus, there exists  $g\in\Polyd{l+1}(\Ths)$ such that $\GRAD g= \bg$.
  Let $\potOp{l+1} \bq \coloneq g - g(\bx_T)$.
  Clearly, $\potOp{l+1}\bq$ satisfies \eqref{eq:rhoOp.Gamma} as well as the condition at the common vertex $\bx_T$ required by \eqref{def:rhoOp}.
  Using this process and the fact that the operators ${\bGamma}^l_{\Goly{},\TTs}, \bpi^{l}_{\Goly{},\TTs}$ and $\bpi^{{\rm c},l}_{\Goly{},\TTs}$ are all linear, it is easy to prove the linearity of $\potOp{l+1}$.
  \medskip\\
  \noindent\underline{(i) \emph{Proof of \eqref{lem:rhoOp.a}.}} The first identity in  \eqref{lem:rhoOp.a} 
  comes from \eqref{eq:rhoOp.Gamma} taking $\bq=\bb\in\Goly{l}(\TTs)\subset \Polyd{l}(\TTs)$ and using the first identity in \eqref{lem:rec.golygammas.eq.bb} and the fact that $\bpi^{l}_{\Goly{},\TTs}\bb=\bb$.
  To prove the second identity in \eqref{lem:rhoOp.a}, we write \eqref{eq:rhoOp.Gamma} with $\bq=\bc\in\Goly{\rm c, l}(\TTs)\subset \Polyd{l}(\TTs)$ to get
  \[
  \GRAD \potOp{l+1}_{\TTs} \bc
  = \GammaGT{l}(\bpi^{l}_{\Goly{},\TTs}\bc -\bpi^{l}_{\Goly{},\TTs}\bpi^{{\rm c},l}_{\Goly{},\TTs}\bc)
  = \GammaGT{l}(\bpi^{l}_{\Goly{},\TTs}\bc -\bpi^{l}_{\Goly{},\TTs}\bc)
  = \bzero,
  \]
  where, in the third step, we have used the fact that $\bpi^{{\rm c},k-1}_{\Goly{},\TTs}$ is  a projection onto $\Goly{{\rm c},k-1}(\TTs)$ (so that $\bpi^{{\rm c},l}_{\Goly{},\TTs}\bc = \bc$) and, in the last step, the linearity of $\GammaGT{l}$.
  The conclusion follows from the condition at the common vertex $\bx_T$ required by \eqref{def:rhoOp}.
  \medskip\\
  \noindent\underline{(ii) \emph{Proof of \eqref{lem:rhoOp.b}.}}
  We first prove the bound for the first term in the left-hand side of \eqref{lem:rhoOp.b}.
  We observe that
  \begin{equation}\label{eq:continuity.varrho:L2}
    \norm{\Ldeux[T]}{\potOp{l+1}\bq}^2
    =
    \sum_{\tau\in\TTs}
    \norm{\Ldeux[\tau]}{\potOp{l+1}\bq}^2
    \lesssim
    \sum_{\tau\in\TTs}
    h_\tau^2\norm{\Ldeux[\tau]}{\GRAD\potOp{l+1}\bq}^2
    \lesssim
    \sum_{\tau\in\TTs}
    h_\tau^2\norm{\Ldeux[\tau]}{\bq}^2
    \leq
    h_T^2\norm{\Ldeux[T]}{\bq}^2,
  \end{equation}
  where, in the second step, we have used the local Poincaré inequality \eqref{ineq:poincare.tau} for all $\tau \in \TTs$
  (this is made possible by the condition at the  common vertex $\bx_T$  in \eqref{def:rhoOp}), in the third step we have used first \eqref{eq:rhoOp.Gamma}, then the first bound  in \eqref{lem:rec.golygammas.eq.c} along with a triangle inequality and then the $\bL^2$-boundedness of $\bpi^{{\rm c},l}_{\Goly{},\TTs}$ and $\bpi^{l}_{\Goly{},\TTs}$,
  and in the last step the fact that $\tau\subset T$ (so that, by mesh regularity, $h_\tau^{-1}\lesssim h_T^{-1}$).
  Taking the square root yields the bound for the first term in the left-hand side of \eqref{lem:rhoOp.b}.
  The bound for the second term in the left-hand side of \eqref{lem:rhoOp.b} is a straightforward consequence of \eqref{eq:continuity.varrho:L2} combined with the discrete inverse inequality \eqref{eq:inverse} below with $p = 2$.
  \medskip\\
  \noindent\underline{(iii) \emph{Proof of \eqref{lem:rhoOp.d}.}}  To prove \eqref{lem:rhoOp.d}, we use 
  the identities
  $
  \bq
  =
  {\GRAD \potOp{l+1}\bq} + (\Id -\GRAD \potOp{l+1})\bq
  $
  and \eqref{lem:rec.golygammas.eq.a} along with the definitions \eqref{def:recoveryop} of $\RecGT{l}$ and \eqref{def:golygammas} of $\GammaGT{l}$ and $\GammacGT{l}$,
  and use \eqref{eq:rhoOp.Gamma} to write
  \[
  (\Id -\GRAD \potOp{l+1})\bq
  = \bq - \GRAD \potOp{l+1}\bq
  \overset{\eqref{lem:rec.golygammas.eq.a},\,\eqref{def:recoveryop},\,\eqref{eq:rhoOp.Gamma}}
  =\GammacGT{l}(\bpi^{{\rm c},l}_{\Goly{},\TTs}\bq - \bpi^{{\rm c},l}_{\Goly{},\TTs}\bpi^{l}_{\Goly{},\TTs}\bq).
  \]
  Using \eqref{lem:rec.golygammas.eq.b} with $(\bc,\bq) = (\bpi^{{\rm c},l}_{\Goly{},\TTs}\bq,\bpi^{l}_{\Goly{},\TTs}\bq)$, we obtain $(\Id -\GRAD \potOp{l+1})\bq \in\Goly{{\rm c},l}(\TTs)$.  
  \medskip\\
  \noindent\underline{(iv) \emph{Proof of \eqref{lem:rhoOp.e}.}}
  Let  $\bq \in \Polyd{l}(T)$
    and write
  $\bq {\overset{\eqref{eq:kz.decomp}}=} \bg_T + \bg_T^{\rm c}$, where $\bg_T\in\Goly{l}(T)$ and $\bg_T^{\rm c} \in  \Goly{{\rm c},l}(T)$.
  Using  the fact that $\Goly{l}(T)\subset \Goly{l}(\TTs)$ and $\Goly{{\rm c},l}(T)\subset \Goly{{\rm c},l}(\TTs)$ 
  along with the definition \eqref{def:rhoOp} of  $\potOp{l+1}$, \eqref{lem:rhoOp.a}, and its linearity, it is readily seen that
  $$ \Goly{k-1}(\TTs)\ni \GRAD\potOp{l+1} {\bq} 
  = \GRAD\potOp{l+1}(\bg_T + \bg_T^{\rm c})
  =
  \GRAD\potOp{l+1}\bg_T
  + \GRAD\potOp{l+1}\bg_T^{\rm c}
  = \bg_T + \bzero
  = \bg_T  \in \Goly{k-1}(T),
  $$
  thus $\potOp{{l+1}}(\bq) = g_T + g_{0,\TTs}$, where $g_T\in \Poly{l+1}(T)$ and $g_{0,\TTs}\in \Poly{0}(\TTs)$.
  But, using the second condition in \eqref{def:rhoOp}, we have that $g_T(\bx_T)= - g_{0,\TTs}(\bx_T)$ for all $\tau$ in $\TTs$,
  thus $g_{0,\TTs}\in \Poly{0}(T)$, since $g_T(\bx_T)$ is independent of $\tau$, and we conclude.
\end{proof}

\subsection{Convective term}\label{sec:discrete.problem:convective.term}

Let an element $T\in\calT_h$ be fixed.
For every simplicial face $\sigma\in \Fhs$, we introduce an arbitrary but fixed ordering of the simplicial elements $\tau_1$ and $\tau_2$ such that $\sigma \subset \partial \tau_1 \cap \partial \tau_2$, and let $\bn_\sigma\coloneq\bn_{\tau_1\sigma}=-\bn_{\tau_2\sigma}$, where  $\bn_{\tau_i\sigma}, i\in\{1,2\}$,
denotes the unit vector normal to $\sigma$ pointing out of $\tau_i$ (see Figure \ref{fig:simplices.faces.T.b}).
With this convention, for every scalar-valued function $\zeta$ admitting a possibly two-valued trace on $\sigma$, we define the jump and average of $\zeta$ across $\sigma$ respectively as
\begin{equation}\label{eq:trace.op}
  \llbracket \zeta\rrbracket_\sigma \coloneq \zeta_{|\tau_1} - \zeta_{|\tau_2}
  \qquad\text{and}\qquad
  \lbrace \zeta\rbrace_\sigma \coloneq \frac{1}{2}\left(\zeta_{|\tau_1} + \zeta_{|\tau_2}\right).
\end{equation}
For any boundary simplicial face $\sigma \subset F \in \Fhb$, we set $ \llbracket \zeta\rrbracket_\sigma \coloneq  \lbrace \zeta\rbrace_\sigma \coloneq \zeta $. When applied to vector- or tensor-valued functions, the jump and average  operators act component-wise.

We introduce the global function $t_h:\left[\uline{\bU}_h^k\right]^3\to\mathbb{R}$ such that, for all $(\uline{\bw}_h,\uline{\bv}_h,\uline{\bz}_h) \in \left[\uline{\bU}_h^k\right]^3$,
\begin{equation}\label{eq:th}
  \begin{aligned}
    t_h(\uline{\bw}_h,\uline{\bv}_h,\uline{\bz}_h)
    \coloneq 
	  & 
      \int_\Omega (\bR_h^k\uline{\bw}_h \cdot \GRAD) \bR_h^k\uline{\bv}_h \cdot \bR_h^k\uline{\bz}_h
	  - \sum_{\sigma\in\Fhs^{\rm i}}\int_\sigma (\bR_h^k\uline{\bw}_h\cdot\bn_\sigma) \llbracket \bR_h^k\uline{\bv}_h \rrbracket_\sigma \cdot \lbrace \bR_h^k\uline{\bz}_h\rbrace_\sigma\\
	  &+ \sum_{\sigma\in\Fhs^{\rm i}}\int_\sigma  \frac{1}{2} |\bR_h^k\uline{\bw}_h\cdot\bn_\sigma| \llbracket \bR_h^k\uline{\bv}_h \rrbracket_\sigma \cdot \llbracket \bR_h^k\uline{\bz}_h\rrbracket_\sigma
	  + \sum_{T \in\Th}\sum_{\sigma\in\Fhs[T]^{\rm i}}t_{T,\sigma}^k(\uline{\bw}_T,\uline{\bv}_T,\uline{\bz}_T).
  \end{aligned}
\end{equation}
The first term in the second line is the usual upwind stabilization.
The form $t_{T,\sigma}^k:\uline{\bU}_T^k\times\uline{\bU}_T^k\times\uline{\bU}_T^k\to\mathbb{R}$ also works as a penalty term and, setting ${\bw}_T^0\coloneq\bpi_T^0\bw_T$, it is defined as follows:
\begin{equation}\label{eq:tT}
  t_{T,\sigma}^k(\uline{\bw}_T,\uline{\bv}_T,\uline{\bz}_T)
  \coloneq
  \begin{cases}
    0 &\,\text{if }k=0,\\
    \int_\sigma
    \llbracket
    \potOp{k}({{\bpi_{\TTs}^{k-1}}}(({\bw}_T^0 \cdot \GRAD) \bR_T^k\uline{\bv}_T)) \rrbracket_\sigma 
    \llbracket
    \potOp{k}({{\bpi_{\TTs}^{k-1}}}(({\bw}_T^0 \cdot \GRAD) \bR_T^k\uline{\bz}_T)) \rrbracket_\sigma
 &\,\text{if }k\ge1\\
  \end{cases}.
\end{equation}

Notice that $t_{T,\sigma}^k$ in \eqref{eq:tT} is linear only in its second and third arguments.
\begin{remark}[Comparison with Virtual Elements]\label{rem:tsigma.VEM}
  In \cite{Beirao-da-Veiga.Brezzi.ea:Oseen.2021}, the authors propose a Virtual Element discretization of the Oseen equation which includes a penalization term somewhat similar to $t_{T,\sigma}^k$ (see, in particular, \cite[Eq. (4.18)]{Beirao-da-Veiga.Brezzi.ea:Oseen.2021}).
  The penalization term $t_{T,\sigma}^k$ used here appears, however, more subtle, since the factor $({\bw}^0_T\cdot \GRAD) \bR_T^k\uline{\bv}_T$ is in $\Polyd{k}(\TTs)$, and does not necessarily belong to $\Polyd{k}(T)$.  How to deal with this difficulty is  one of the main contributions of this paper, as detailed in Section \ref{sec:convergence.analysis.vel} below.
\end{remark}

\subsection{Discrete problem}\label{sec:discrete.problem}

The HHO discrete formulation of problem \eqref{eq:nstokes:weak} then reads:
Find $\uline{\bu}_h: [0,\tF] \to  \uline{\bU}_{h,0}^k$ with $\uline{\bu}_h(0)= \uline{\bI}_h^k\bu_0 \in \uline{\bU}_{h,0}^{k}$ and $p_h: (0,\tF] \to P_h^k$ such that it holds, for all $(\uline{\bv}_h,q_h)\in \uline{\bU}_{h,0}^{k}\times {\Poly{k}(\Th)}$ and almost every $t\in(0,\tF)$,
\begin{multline}\label{eq:nstokes:discrete}
     a_{R,h}(d_t \uline{\bu}_h(t),\uline{\bv}_h) +
    \nu a_h(\uline{\bu}_h(t),\uline{\bv}_h)
    +
    t_h(\uline{\bu}_h(t),\uline{\bu}_h{(t)},\uline{\bv}_h)
    +b_h(\uline{\bv}_h,p_h{(t)})
    \\
    -b_h(\uline{\bu}_h{(t)},q_h)
     = \ell_h(\bef{(t)},\uline{\bv}_h).
\end{multline}


\section{Velocity error analysis}\label{sec:convergence.analysis.vel}

Most of the relations that we will write in this section hold for almost every $t\in(0,\tF)$. To simplify the notation, we omit the dependence on $t$ and write, e.g., $\bu$ and $\uline{\bu}_h$ instead of $\bu(t)$ and $\uline{\bu}_h(t)$.
Let $\bu$ solve the continuous problem \eqref{eq:nstokes:weak} and $\uline{\bu}_h$ the discrete problem \eqref{eq:nstokes:discrete}.
We define  the velocity error as
\begin{equation}\label{def:error.split}
  \uline{\be}_h
  \coloneq
  \uline{\bu}_h - \hat{\uline{\bu}}_h,
\end{equation}
where $\hat{\uline{\bu}}_h=\uline{\bI}_h^k\bu$.
It will be also useful to define {$\bbeta_h \in \bL^2(\Ths)$ such that}
\begin{equation}\label{def:bbeta.T}
  \text{%
    ${(\bbeta_h)_{|T} \coloneq} \bbeta_T \coloneq \bR_T^k\hhointerpT{\bu} - \bu$ for all $T\in \Th$.
  }
\end{equation}
We now present the two main results of this manuscript, which bound the velocity error \eqref{def:error.split}.

\begin{theorem}[Velocity error estimate]\label{error.vel.theorem}
  Let the pair $(\bu,p)$ be the solution of the continuous problem \eqref{eq:nstokes:weak},
  and let  the pair $(\uline{\bu}_h, p_h)$
  be the solution of the  discrete problem \eqref{eq:nstokes:discrete}.
  Suppose that 
    $\bu \in L^{2}(\bW^{1,\infty}(\Omega))\cap L^\infty(\bH^{1}(\Omega)) \cap L^2(\bH^{2}(\Omega))$,
  $d_t \bu \in
  L^2(\bH^{1}(\Omega))$,
  and
  $p:(0,\tF)\to  P\cap\Hun$.
  Then, we have the following error estimate:
  \begin{align}\label{ineq:main.theo}
    \norm{L^\infty(\norm{R,h}{\cdot})}{\uline{\be}_h}^2
    +
    \nu\int _0^{\tF} \norm{1,h}{\uline{\be}_h}^2
    \lesssim
    e^{G(\bu,\tF)}
    \int _0^{\tF} 
    H(\bu),
  \end{align}
  where
  $
  G(\bu,\tF)\coloneq
  \tF
  +
  \int _0^{\tF} 
  \left( 
  \seminorm{\bW^{1,\infty}(\Omega)}{\bu}
  + h\gamma_k\norm{\Linftyd[\Omega]}{\bu}
  + h \widetilde{\delta}_{0k} \seminorm{\bW^{1,\infty}(\Omega)}{\bu}^2
  \right)
  $
  and 
  \begin{equation}\label{def:H_u}
  H(\bu)
  \coloneq
  \sum_{T\in\Th}
  \nu
  h_T^2\norm{{\bL^2(T)}}{\LAP{\bu} -\bpi_T^{k-1}\LAP{\bu} }^2
  +
  \nu\|{\cal{E}}_{a,h}(\bu;\cdot)\|_{1,h,*}^2
  +
  \frakN_1
  +
  \frakN_2,
  \end{equation}
  where, denoting by $\delta_{0k}$ the Kronecker delta, we have let
  \begin{equation}\label{eq:tilde.delta}
    \widetilde{\delta}_{0k}\coloneq 1 - \delta_{0k},
  \end{equation}
  while $\gamma_k$ is defined by
  \begin{equation}\label{def:gammak}
    \gamma_{k}\coloneq
    \begin{cases}
      0,& \text{if } k\in\{0,1\}\\
      1,& \text{otherwise}
    \end{cases},
  \end{equation}
  and the terms $\frakN_1$ and  $\frakN_2$ are 
    respectively such that, for almost every $t \in (0,\tF)$,
  \begin{equation}\label{def:ve.fI_1.tilde}
    \begin{aligned}
      \frakN_1
      &\coloneq
      \sum_{T\in \Th}\left( \seminorm{\bW^{1,\infty}(T)}{\bu}
      \norm{\bL^{2}(T)}{\hhointerpTT{\bu} - \bu}^2
      +
      {\gamma_{k}}{h_T^{-3}}
      \norm{\Linftyd[T]}{\bu}
      \norm{\Ldeuxd[T]}{\hhointerpTT{\bu}-\bu}^2\right )\\
      &\quad+
      \sum_{T\in \Th}
      \norm{\bW^{1,\infty}(T)}{\bu}
      \left(
      h_{T}^{-1}\norm{\Ldeuxd[T]}{ \bbeta_{T}}^2
      +
      h_{T}\seminorm{\bH^{1}(\TTs)}{ \bbeta_{T}}^2
      +
      \norm{\Ldeuxd[T]}{ \bbeta_{T}}^2
      \right)
      \\
      &\quad+
      \widetilde{\delta}_{0k}
      \sum_{T\in \Th} 
      h_{T}
      \norm{\bL^{\infty}(T)}{\bu}^2
      \left(
      \norm{\bL^2(T)}{ \GRAD \bbeta_T}^2
      +
      \norm{\bL^2(T)}{\GRAD( \bu- \bpi_T^k{\bu} )}^2
      \right),
    \end{aligned}
  \end{equation}
  and
  \begin{equation}\label{def:ve.fI_2.tilde}
    \begin{aligned}
      \frakN_2
      &\coloneq
      \sum_{T\in\Th}
      \left(
      \norm{\Ldeuxd[T]}{ \bR_T^k(\widehat{\underline{{d_t}\bu}}_T) 
        - {{d_t\bu}}}^2
      +
      \norm{\Ldeuxd[T]}{ {d_t\bu} 
        - (\widehat{{d_t\bu}})_T}^2
      \right)\\
      &\quad
      +
      \sum_{T\in\Th}
      h_T^2\norm{\Ldeuxd[T]}{\GRAD((\widehat{{d_t\bu}})_T
        - d_t\bu)}^2
      +
      \sum_{T\in\Th}
      \sum_{F\in \calF_T}
      h_F\norm{\Ldeuxd[F]}{d_t\bu
        - (\widehat{{d_t\bu}})_F}^2,
    \end{aligned}
  \end{equation}
where $\widehat{\underline{d_t\bu}}_T \coloneq\underline{\bI}_T^k(d_t\bu) $ for all $T\in\Th$.
\end{theorem}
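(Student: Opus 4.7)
The plan is to derive an error equation for $\uline{\be}_h$ by substituting $\hat{\uline{\bu}}_h = \uline{\bI}_h^k\bu$ into the discrete scheme \eqref{eq:nstokes:discrete}, subtracting the result from \eqref{eq:nstokes:discrete}, and using the momentum equation \eqref{eq:nstokes:strong:momentum} tested against $\bR_h^k\uline{\bv}_h$ (admissible thanks to the definition \eqref{eq:lh:form} of $\ell_h$). This yields an identity whose left-hand side is
\[
a_{\tR,h}(d_t\uline{\be}_h,\uline{\bv}_h) + \nu a_h(\uline{\be}_h,\uline{\bv}_h) + b_h(\uline{\bv}_h, p_h - \pi_h^k p),
\]
with the right-hand side a sum of three consistency errors associated with the time-derivative, viscous, and convective forms. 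The pressure gradient disappears from the right-hand side by the pressure-robustness mechanism: since $\DIV\bR_T^k\uline{\bv}_T = D_T^k\uline{\bv}_T \in \Poly{k}(T)$ by \eqref{eq:darcyT:weak:a}, one has $\int_\Omega \GRAD p \cdot \bR_h^k\uline{\bv}_h = -\int_\Omega (\pi_h^k p)\,D_h^k\uline{\bv}_h$, absorbed into the $b_h$-term. Testing with $\uline{\bv}_h=\uline{\be}_h$, the symmetry of $a_{\tR,h}$ and the coercivity \eqref{eq:ns:stab.h} yield $\tfrac12 d_t\norm{\tR,h}{\uline{\be}_h}^2 + \nu C_a\norm{1,h}{\uline{\be}_h}^2$ on the left, while the residual $b_h(\uline{\be}_h, p_h - \pi_h^k p)$ vanishes because $D_h^k\uline{\be}_h = 0$ as a consequence of \eqref{eq:DT.commuting} combined with the discrete incompressibility imposed in \eqref{eq:nstokes:discrete}. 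This is the step that makes \eqref{ineq:main.theo} $p$-independent.

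Among the right-hand side terms, two are linear and treated by known ingredients. The viscous consistency reduces to \eqref{eq:ns:ah:consistency} plus the cost of replacing $\bv_h$ by $\bR_T^k\uline{\be}_T$ in the Laplacian term; using the orthogonality $\bpi^{k-1}_T(\bR_T^k\uline{\be}_T - \bv_T) = 0$ from \eqref{ineq:rtn:consis} together with \eqref{ineq:rtn:bound}, a Young's inequality generates the $\nu h_T^2\norm{\bL^2(T)}{\LAP\bu - \pi_T^{k-1}\LAP\bu}^2$ contribution of $H(\bu)$ (the residual $\nu\norm{1,T}{\uline{\be}_T}^2$ being absorbed on the left). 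The time-derivative consistency is written as $a_{\tR,h}(d_t\hat{\uline{\bu}}_h,\uline{\be}_h) - (d_t\bu,\bR_h^k\uline{\be}_h)$ and bounded using Lemma \ref{lemm:rtn} applied to $\widehat{\underline{d_t\bu}}_T$ together with \eqref{eq:L2.hho<=RT}, yielding exactly the contributions of $\frakN_2$.

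The main obstacle is the nonlinear residual $t_h(\uline{\bu}_h,\uline{\bu}_h,\uline{\be}_h) - t(\bu,\bu,\bR_h^k\uline{\be}_h)$, which must be bounded with no factor of $\nu^{-1}$. I would split it as $\bigl[t_h(\uline{\bu}_h,\uline{\bu}_h,\uline{\be}_h)-t_h(\hat{\uline{\bu}}_h,\hat{\uline{\bu}}_h,\uline{\be}_h)\bigr] + \bigl[t_h(\hat{\uline{\bu}}_h,\hat{\uline{\bu}}_h,\uline{\be}_h) - t(\bu,\bu,\bR_h^k\uline{\be}_h)\bigr]$. The linearization bracket is controlled by the $\seminorm{\bW^{1,\infty}(\Omega)}{\bu}$- and $\norm{\Linftyd[\Omega]}{\bu}$-factors appearing in $G(\bu,\tF)$ times $\norm{\tR,h}{\uline{\be}_h}^2$, feeding the exponential in \eqref{ineq:main.theo} through Grönwall. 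The consistency bracket is analyzed by integration by parts simplex-by-simplex on $\TTs$: the volume residuals depend on $\bbeta_T$ and on the interpolation/reconstruction errors $\hat{\bu}_T - \bu$ and $\bu-\bpi_T^k\bu$, producing the first two blocks of $\frakN_1$; the jump residuals across $\sigma\in\Fhs^{\rm i}$, however, are not cancelled by the upwind stabilization alone. This is where the new penalty $t_{T,\sigma}^k$ of \eqref{eq:tT} becomes essential: since $\bpi_{\TTs}^{k-1}((\bw_T^0\cdot\GRAD)\bR_T^k\uline{\bv}_T)$ lies in $\Polyd{k-1}(\TTs)$ but not, in general, in $\Polyd{k-1}(T)$, its jumps across interior simplicial faces do not vanish; by the orthogonality \eqref{lem:rhoOp.d} of $(\Id-\GRAD\potOp{k})$ to $\Goly{{\rm c},k-1}(\TTs)$, these jumps are identified with jumps of the scalar potential $\potOp{k}({\cdot})$ — exactly the quantity penalized in \eqref{eq:tT} — and the bound \eqref{lem:rhoOp.b} combined with discrete trace inequalities translates them into contributions scaling like $h\norm{\Linftyd[T]}{\bu}^2$ times $\norm{\bL^2(T)}{\GRAD\bbeta_T}^2$ or $\norm{\bL^2(T)}{\GRAD(\bu-\bpi_T^k\bu)}^2$, producing the last block of $\frakN_1$ without any factor of $\nu^{-1}$. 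Collecting everything into a differential inequality of the form $\tfrac12 d_t\norm{\tR,h}{\uline{\be}_h}^2 + \nu C_a\norm{1,h}{\uline{\be}_h}^2 \lesssim g(\bu,t)\norm{\tR,h}{\uline{\be}_h}^2 + H(\bu)$ — where \eqref{eq:L2.hho<=RT} is used throughout to keep convective residuals controlled by $\norm{\tR,h}{{\cdot}}$ rather than $\norm{1,h}{{\cdot}}$ so that no $\nu^{-1}$ enters via Young's inequality — and applying Grönwall's lemma on $[0,t]$ with $\uline{\be}_h(0)=\bzero$ gives \eqref{ineq:main.theo}.
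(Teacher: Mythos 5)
Your proposal follows essentially the same route as the paper's proof: the error equation tested with $\uline{\be}_h$, the exact vanishing of the pressure contribution via $D_h^k\uline{\be}_h=0$, the viscous and time-derivative consistency bounds producing the $\nu h_T^2\norm{\bL^2(T)}{\LAP\bu-\bpi_T^{k-1}\LAP\bu}^2$ term and $\frakN_2$, the decomposition of the convective derivative into $\GRAD\potOp{k}(\cdot)$ plus a remainder in $\Goly{{\rm c},k-1}(\TTs)$ whose potential jumps are absorbed by the penalty $t_{T,\sigma}^k$, the systematic use of \eqref{eq:L2.hho<=RT} to keep $\nu^{-1}$ out, and Gr\"onwall. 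The only quibble is a wording slip: \eqref{lem:rhoOp.d} asserts that $(\Id-\GRAD\potOp{k})\bq$ \emph{belongs to} $\Goly{{\rm c},k-1}(\TTs)$ (it is not an orthogonality statement), and it is this membership, combined with condition \eqref{eq:darcyT:weak:ab}, that handles the complement part and generates the $\gamma_k h_T^{-3}$ contribution in $\frakN_1$.
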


Using the theorem above, the approximation properties of the local operator $\bR_T^k$ presented in Lemma \ref{lemm:rtn}, and the approximation properties  \eqref{eq:l2proj:error:cell} of  $\bpi_T^{k^\star}$ for all $T\in\Th$,
along with the following standard property of the $L^2$-projector $\bpi_F^k$  on $F\in\Fh$:
\begin{equation*}
  \| \bpi_F^k\bv- \bv \|_{\bL^2(F)}
  =
  \inf_{\bw \in \Polyd{k}(F)}\| \bw - \bv \|_{\bL^2(F)}
  \leq\|\bpi_T^k\bv -\bv \|_{\bL^2(F)},
\end{equation*}
we have  the following corollary.

\begin{corollary}[Velocity convergence rates]\label{cor.vel.convergence}
  Under the notations and the assumptions of the previous theorem, and additionally assuming $\bu \in L^\infty(\bH^{k+1}(\Th))\cap L^2(\bH^{k+2}(\Th))$ for $k\in\{0,1\}$, $\bu \in L^\infty(\bH^{k+2}(\Th))$ for $k>1$, and $d_t \bu \in L^2(\bH^{k+1}(\Th))$, it holds:
  \begin{align}\label{ineq:main.cor}
    \norm{L^\infty(\norm{R,h}{\cdot})}{\uline{\be}_h}^2
    +
    \nu\int _0^{\tF} \norm{1,h}{\uline{\be}_h}^2
    \lesssim
    e^{G_1(\bu,\tF)}
    H_1(\bu,\tF),
  \end{align}
  where
  $
  G_1(\bu,\tF)\coloneq
  \tF
  +
  \norm{L^1(\bL^\infty)}{\GRAD \bu}
  +
  h \gamma_k \norm{L^1(\bL^\infty)}{\bu}
  +
  h\widetilde{\delta}_{0k}
  \norm{L^2(\bL^\infty)}{\GRAD \bu}^2
  $ and
  \begin{align*}
    H_1(\bu,\tF)
    \coloneq&
    \nu
    h^{(2k+2)}
    \norm{L^2(\bH^{k+2}(\Th))}{\bu}^2
    +
    h^{(2k+1)}
    \norm{L^1(\bW^{1,\infty})}{\bu}
    \left(
    \norm{L^\infty(\bH^{k+1}(\Th))}{\bu}^2
    +
    \gamma_k
    \norm{L^\infty(\bH^{k+2}(\Th))}{\bu}^2
    \right)
    \\
    &
    +
    \widetilde{\delta}_{0k}
    h^{(2k+1)}
    \norm{L^2(\bL^{\infty})}{\bu}^2
    \norm{L^\infty(\bH^{k+1}(\Th))}{\bu}^2
    +
    h^{(2k+2)}
    \norm{L^2(\bH^{k+1}(\Th))}{d_t\bu}^2.
  \end{align*}
\end{corollary}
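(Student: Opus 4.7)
The plan is to invoke Theorem~\ref{error.vel.theorem} and bound each factor in the right-hand side of \eqref{ineq:main.theo} using the stated regularity hypotheses together with the approximation properties of $\bpi_T^{k^\star}$, $\bpi_F^k$, and $\bR_T^k$. Concretely, I will establish $G(\bu,\tF)\lesssim G_1(\bu,\tF)$ and $\int_0^{\tF} H(\bu)\,\mathrm{d}t\lesssim H_1(\bu,\tF)$, then plug these estimates into \eqref{ineq:main.theo}.

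The control of the exponent is immediate: rewriting the three time integrals in $G(\bu,\tF)$ as Bochner norms, the first two give $L^1$-Bochner norms of $\seminorm{\bW^{1,\infty}(\Omega)}{\bu}$ (equivalently of $\GRAD\bu$) and $\norm{\bL^{\infty}(\Omega)}{\bu}$, while $\int_0^{\tF}\seminorm{\bW^{1,\infty}(\Omega)}{\bu}^2$ is exactly the squared $L^2(\bL^{\infty})$-seminorm of $\GRAD\bu$. This yields $G(\bu,\tF)\lesssim G_1(\bu,\tF)$.

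The substantive work is to estimate $\int_0^{\tF}H(\bu)\,\mathrm{d}t$ term by term. For the two $\nu$-prefactored contributions, \eqref{eq:l2proj:error:cell} with $(l,m,r,s)=(k-1,0,2,k)$ applied to $\LAP\bu\in\bH^k(T)$ and the consistency bound \eqref{eq:ns:ah:consistency} both give $\nu h^{2k+2}\norm{L^2(\bH^{k+2}(\Th))}{\bu}^2$ after summation over $T\in\Th$ and time integration. For the first sub-term of $\frakN_1$ in \eqref{def:ve.fI_1.tilde}, \eqref{eq:l2proj:error:cell} with $s=k+1$ yields a rate $h_T^{2(k+1)}$, which dominates the target $h^{2k+1}$. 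The sub-term with prefactor $\gamma_k h_T^{-3}$, active only for $k>1$ (where $\hhointerpTT{\bu}=\bpi_T^{k+1}\bu$), benefits from the extra order of \eqref{eq:l2proj:error:cell} with $s=k+2$ and produces precisely $h_T^{-3}\cdot h_T^{2(k+2)}=h_T^{2k+1}$, giving the $\gamma_k\norm{L^\infty(\bH^{k+2}(\Th))}{\bu}^2$ contribution of $H_1(\bu,\tF)$. The three addends involving $\bbeta_T$ are handled by Lemma~\ref{lemm:rtn}(ii) with $s=k+1$ and $m\in\{0,1\}$, which delivers $h_T^{k+1-m}\seminorm{\bH^{k+1}(T)}{\bu}$; against the prefactors $h_T^{-1},h_T,1$, each addend scales as $h_T^{2k+1}|\bu|^2_{\bH^{k+1}(T)}$ or better. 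The last family of $\frakN_1$ sub-terms is treated by the same lemma for $\GRAD\bbeta_T$ and by \eqref{eq:l2proj:error:cell} with $(m,s)=(1,k+1)$ for $\GRAD(\bu-\bpi_T^k\bu)$, yielding $h_T\cdot h_T^{2k}=h_T^{2k+1}$.

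For $\frakN_2$, the two $\bL^2(T)$ contributions use Lemma~\ref{lemm:rtn}(ii) and \eqref{eq:l2proj:error:cell} with $(m,s)=(0,k+1)$ applied to $d_t\bu$; the gradient contribution uses the same with $m=1$ compensated by the $h_T^2$ prefactor; the boundary term uses the hinted inequality $\norm{\bL^2(F)}{\bpi_F^k\bv-\bv}\leq\norm{\bL^2(F)}{\bpi_T^k\bv-\bv}$ followed by the trace approximation \eqref{eq:l2proj:error:faces} with $(l,r,s)=(k,2,k+1)$. All four contributions produce $h^{2k+2}\norm{L^2(\bH^{k+1}(\Th))}{d_t\bu}^2$ after summation over $T\in\Th$ and time integration.

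The only real bookkeeping difficulty is the case separation $k\in\{0,1\}$ vs.\ $k>1$ imposed by the choice $k^\star\in\{k,k+1\}$: this is exactly what the factors $\gamma_k$ and $\widetilde{\delta}_{0k}$ in $H_1(\bu,\tF)$ (and in the corresponding regularity assumptions) are designed to encode, ensuring the extra regularity $\bH^{k+2}$ is required only when the $h_T^{-3}$-weighted sub-term of $\frakN_1$ is genuinely present. Beyond this, the proof is a direct plug-in of the stated estimates into Theorem~\ref{error.vel.theorem}.
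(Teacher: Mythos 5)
Your proposal is correct and follows exactly the route the paper intends: the paper gives no detailed proof of the corollary, only the recipe of plugging the approximation properties of $\bpi_T^{k^\star}$, $\bpi_F^k$, and $\bR_T^k$ into Theorem~\ref{error.vel.theorem}, and your term-by-term bookkeeping (including the $k^\star$-driven case split that motivates $\gamma_k$ and the extra $\bH^{k+2}$ regularity, and the exact cancellation $h_T^{-3}\,h_T^{2(k+2)}=h_T^{2k+1}$) fills in that recipe correctly. The only nit is the phrase ``which dominates the target $h^{2k+1}$'' for the first $\frakN_1$ sub-term: you mean the $h^{2(k+1)}$ contribution is \emph{dominated by} (i.e., absorbed into) the $h^{2k+1}$ term since $h\lesssim 1$.
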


\begin{remark}[No dependency on $\nu^{-1}$ or $p$]\label{rem:nuinverse}
  The right-hand sides of \eqref{ineq:main.theo} and \eqref{ineq:main.cor} do not depend on $\nu^{-1}$ or on the pressure $p$, thus we have indeed a Reynolds-semi-robust and pressure-robust method.
\end{remark}

\begin{remark}[Formulation using only $\boldsymbol{H}_{\text{div}}(\Ths)$]
  One may wonder why we do not use a formulation completely based on a discrete conforming subspace of $\Hdiv[\Ths]$. 
The main reason is that the functions in this space are not fully continuous at interfaces, hence jump penalization for the discretization of the viscous term would be required for stability.
    This would, in turn, result in a stronger coupling among the degrees of freedom of neighbouring elements and, thus, in a larger stencil.
\end{remark}
\begin{remark}[Estimate including the upwind-norm]\label{rem:upwindnorm}
Following the same steps in the proof of Theorem \ref{error.vel.theorem},
and similar steps in the proof of Lemma \ref{lem:fI.1.vel},
we can  include the upwind norm in the left hand side of the inequalities \eqref{ineq:main.theo} and \eqref{ineq:main.cor}, and  have the following estimate  instead
  \begin{align*}
    \norm{L^\infty(\norm{R,h}{\cdot})}{\uline{\be}_h}^2
    +
   \int _0^{\tF} \Big( \nu  \norm{1,h}{\uline{\be}_h}^2
     + 
    \frac{1-\epsilon}{2} \sum_{\sigma\in\Fhs^{\rm i}}\int_\sigma |\bR_h^k\uline{\bu}_h\cdot\bn_\sigma| |\llbracket \bR_h^k\uline{\be}_h \rrbracket_\sigma |^2\Big)
\lesssim
    e^{G_1(\bu,\tF)}
    H_1(\bu,\tF),
  \end{align*}
where $\epsilon$ is a real number such that $0<\epsilon\leq 1$. 
 \end{remark}
\begin{remark}[Error in the $L^\infty(\bL^2)$-norm]\label{rem:LinftyL2norm}
  Using the inequality \eqref{eq:L2.hho<=RT} for $T\in \Th$, we obtain an order of convergence of $k+\frac{1}{2}$ in the $L^\infty(0,\tF;\bL^2(\Omega))$-norm for the velocity  in \eqref{ineq:main.cor}.
  This order of convergence equals, e.g., the one obtained in \cite[Corollary 5.7]{HanHou:2021} on simplicial meshes.
  However, in Corollary \ref{cor.vel.convergence} above, we do not need to require dominating convection, whereas in \cite{HanHou:2021} the condition $\nu\lesssim h$ is necessary.
  Specifically, observe that in the definition \eqref{def:H_u} of $H(\bu)$ in Theorem \ref{error.vel.theorem} includes the consistency error \eqref{eq:ns:ah:consistency} which, under the regularity conditions stated in Corollary \ref{cor.vel.convergence},  gives an order of convergence of $k+1$,
  thus
  no further assumptions on $\nu$ are needed, in order to make
  the  error $\norm{L^\infty(\norm{R,h}{\cdot})}{\uline{\be}_h}$ in \eqref{ineq:main.cor}  of order $k+\frac{1}{2}$.
  In contrast,  the consistency error
  of the diffusion term using $\boldsymbol{H}_{\text{div}}$-conforming discontinous Galerkin elements, such as the ones used in \cite{HanHou:2021}, is of order $k$,
  which increases to $k+\frac{1}{2}$ assuming dominating convection,
  thus making this assumption essential to obtain a convergence error of $k+\frac{1}{2}$ for the overall numerical scheme.
\end{remark}

\begin{proof}[Proof of Theorem \ref{error.vel.theorem}]
  Recalling that $\uline{\be}_h = \uline{\bu}_h - \hat{\uline{\bu}}_h$ (cf. \eqref{def:error.split}), we take $(\uline{\bv}_h,q_h)=(\uline{\be}_h,0)$ in  \eqref{eq:nstokes:discrete} and, after subtracting the quantity $(a_{R,h}(d_t \hhointerph{\bu},\uline{\be}_h) + \nu a_h(\hhointerph{\bu},\uline{\be}_h))$
  from both sides, we get, for almost every $t \in (0,\tF$),
  \begin{multline*}
    a_{R,h}(d_t \uline{\be}_h,\uline{\be}_h) 
    + \nu a_h(\uline{\be}_h,\uline{\be}_h)
    =
    \ell_h(\bef,\uline{\be}_h)
    - t_h(\uline{\bu}_h,\uline{\bu}_h,\uline{\be}_h)
    - b_h(\uline{\be}_h,p_h)\\
    - a_{R,h}(d_t \hhointerph{\bu},\uline{\be}_h) 
    -\nu a_h(\hhointerph{\bu},\uline{\be}_h).
  \end{multline*}
  Recalling that $\bef= d_t\bu -\nu\LAP \bu + (\bu \cdot \GRAD )\bu  + \nabla p$ almost everywhere in $(0,\tF)\times\Omega$, we go on writing
    \[ 
    \begin{aligned}
      a_{R,h}(d_t \uline{\be}_h,\uline{\be}_h)
      &=
      \underbrace{
        \left(
        \ell_h((\bu \cdot \GRAD ) \bu,\uline{\be}_h)
        - t_h(\uline{\bu}_h,\uline{\bu}_h,\uline{\be}_h)
        \right)
      }_{\frakI_1}
      +
      \underbrace{
        \left(
        \ell_h(d_t\bu,\uline{\be}_h)
        -a_{R,h}(d_t \hhointerph{\bu},\uline{\be}_h)
        \right)
      }_{\frakI_2}
      \\
      &\quad
      - \underbrace{
        \left(
        \nu\ell_h(\LAP\bu,\uline{\be}_h)
        +
        \nu a_h(\hhointerph{\bu},\uline{\be}_h)
        +
        \nu a_h(\uline{\be}_h,\uline{\be}_h)
        \right)
      }_{\frakI_3}
      +
      \underbrace{
        \left(
        \ell_h(\GRAD p,\uline{\be}_h)
        -
        b_h(\uline{\be}_h,p_h)	
        \right).
      }_{\frakI_4}
    \end{aligned}
    \] 
    We next bound the terms in the right-hand side.
    \medskip\\
    \underline{(i) \emph{Estimate of $\frakI_1$.}}
    As we will see, the first term is the most difficult to estimate, so the details of the following bound are provided in the separate Lemma \ref{lem:fI.1.vel} below:
    \begin{equation}\label{eq:ve.fI_1.final}
      \frakI_1
      \lesssim
      \sum_{T\in \Th}  \norm{{\tR,T}}{\uline{\be}_T}^2
      \left(\seminorm{\bW^{1,\infty}(T)}{\bu}+ h_T\gamma_k\norm{\Linftyd[T]}{\bu}
      +
      h_T\widetilde{\delta}_{0k}
      \seminorm{\bW^{1,\infty}(T)}{\bu}^2
      \right) +
      \frakN_1.
    \end{equation}
    \\
    \underline{(ii) \emph{Estimate of $\frakI_2$.}}
    Let us now prove that
    \begin{equation}\label{eq:ve.fI_2.final}
      \frakI_2
      \lesssim
      \sum_{T\in\Th}
      \|\uline{\be}_T \|_{\tR,T}^2
      + \frakN_2.
    \end{equation}%
    Using the definition of the $L^2$-orthogonal projections $\bpi_T^{k^\star}$ and $\bpi _F^k$, it can bee seen that $d_t \hhointerpT{\bu} = \underline{\bI}_T^k(d_t\bu) \eqcolon \widehat{\underline{d_t\bu}}_T$ for all $T\in\Th$ (assuming that $d_t\bu: (0,\tF)\to \Hund[T]$) and, by the definitions \eqref{def:aRT}--\eqref{def:deltasR} of $a_{\tR,T}$ and \eqref{eq:lh:form} of the linear form $\ell_h$, we get
    \begin{equation}\label{eq:ve.fI_2}
      \begin{aligned}
        \frakI_2
        &=
        \sum_{T\in\Th}
        \int_T 
        \big(
        {d_t}\bu -\bR_T^k(\widehat{\underline{d_t\bu}}_T)
        \big) \cdot \bR_T^k\uline{{\be}}_T
        \\
        &\quad
        -
        \sum_{T\in\Th}
        \int_T 
        \bpi_T^{k^\star}\big(
        \bR_T^k(\widehat{\underline{d_t\bu}}_T)- (\widehat{{d_t\bu}})_T
        \big) \cdot \bpi_T^{k^\star}(\bR_T^k\uline{\be}_T- \be_T  )
        \\
        &\quad
        -
        \sum_{T\in\Th}\sum_{F\in \calF_T}
        h_F\int_F
        \bpi_F^{k}\big(
        \bR_T^k(\widehat{\underline{d_t\bu}}_T)- (\widehat{{d_t\bu}})_F
        \big)\cdot \bpi_F^{k}(\bR_T^k\uline{{\be}}_T- \be_F  )
        \eqcolon
        \frakI_{2,1} + \frakI_{2,2}+ \frakI_{2,3}.
      \end{aligned}
    \end{equation}
    To bound $\frakI_{2,1}$ we use  Cauchy--Schwarz and Young inequalities followed by the definition \eqref{def:normRT} of $\norm{\tR,T}{{\cdot}}$ to write
    \[
    \frakI_{2,1}
    \lesssim 
    \sum_{T\in\Th}
    \norm{\Ldeuxd[T]}{ d_t \bu -\bR_T^k(\widehat{\underline{d_t\bu}}_T)}^2 
    + \sum_{T\in\Th} \|\uline{\be}_T \|_{\tR,T}^2
      \overset{\eqref{def:ve.fI_2.tilde}}\le
      \frakN_2 + \sum_{T\in\Th} \|\uline{\be}_T \|_{\tR,T}^2.
    \]
    We bound $\frakI_{2,2}$ in \eqref{eq:ve.fI_2}  using similar steps along with the $\bL^2$-boundedness of $\bpi_T^{k^\star}$, \eqref{def:normRT}, and \eqref{eq:L2.hho<=RT} to write $\norm{\bL^2(T)}{\bR_T^k\uline{\be}_T - \be_T}\le \norm{\bL^2(T)}{\bR_T^k\uline{\be}_T} + \norm{\bL^2(T)}{\be_T} \lesssim \norm{\tR,T}{\uline{\be}_T}$, and then adding and subtracting $d_t \bu$ and using \eqref{eq.l1l2.bound} with $n=2$ to infer
    \[
    \begin{aligned}
      \frakI_{2,2}
      &\lesssim 
      \sum_{T\in\Th}
      \left(
      \norm{\Ldeuxd[T]}{ \bR_T^k(\widehat{\underline{d_t\bu}}_T) - d_t \bu}^2 
      + \norm{\Ldeuxd[T]}{d_t \bu- (\widehat{{d_t\bu}})_T)}^2
      \right)
      + \sum_{T\in\Th} \|\uline{\be}_T \|_{\tR,T}^2
      \\
      \overset{\eqref{def:ve.fI_2.tilde}}&\le
        \frakN_2 + \sum_{T\in\Th} \|\uline{\be}_T \|_{\tR,T}^2.
    \end{aligned}
    \]
    The term $\frakI_{2,3}$ in \eqref{eq:ve.fI_2} is treated similarly to obtain
    \[
    \frakI_{2,3} 
    \lesssim
    \sum_{T\in\Th}
    \sum_{F\in \calF_T}
    h_F\norm{\Ldeuxd[F]}{ \bR_T^k\uline{\be}_T - \be_F}^2
    + \sum_{T\in\Th} \sum_{F\in \calF_T} h_F\norm{\Ldeuxd[F]}{ \bR_T^k(\widehat{\underline{d_t\bu}}_T) 
      - (\widehat{{d_t\bu}})_F}^2
    \eqcolon \frakI_{2,3,1} + \frakI_{2,3,2}.
    \]
    By definition \eqref{def:normRT} of the $\norm{\tR,T}{{\cdot}}$-norm, we have
    \[
    \frakI_{2,3,1}\leq\sum_{T\in\Th} \|\uline{\be}_T \|_{\tR,T}^2.
    \]
    To bound  $\frakI_{2,3,2}$, we use repeatedly \eqref{eq.l1l2.bound} with $n=2$, the discrete trace inequality \eqref{eq:discrete.trace} below valid for piecewise polynomial functions, and the bound \eqref{ineq:card.IT.F} for $\card(\Fh[T])$ as follows:
    \begin{align*}
      &\frakI_{2,3,2}
      \\
      &\quad\lesssim
      \sum_{T\in\Th}
      \left(
      \sum_{F\in \calF_T}
      h_F\norm{\Ldeuxd[F]}{ \bR_T^k(\widehat{\underline{d_t\bu}}_T) 
        - (\widehat{{d_t\bu}})_T}^2
      +
      \sum_{F\in \calF_T}
      h_F\norm{\Ldeuxd[F]}{(\widehat{{d_t\bu}})_T
        - (\widehat{{d_t\bu}})_F}^2
      \right)
      \\
      &\quad\lesssim
      \sum_{T\in\Th}
      \left[
        \norm{\Ldeuxd[T]}{ \bR_T^k(\widehat{\underline{d_t\bu}}_T) 
          - (\widehat{{d_t\bu}})_T}^2
        +
        \sum_{F\in \calF_T}
        \left(
        h_F\norm{\Ldeuxd[F]}{(\widehat{{d_t\bu}})_T
          - d_t\bu}^2
        +
        h_F\norm{\Ldeuxd[F]}{d_t\bu
          - (\widehat{{d_t\bu}})_F}^2
        \right)
        \right]\\
      &\quad\lesssim
      \sum_{T\in\Th}
      \left(
      \norm{\Ldeuxd[T]}{ \bR_T^k(\widehat{\underline{d_t\bu}}_T) 
        - {{d_t\bu}}}^2
      +
      \norm{\Ldeuxd[T]}{ {d_t\bu} 
        - (\widehat{{d_t\bu}})_T}^2
      +
      h_T^2\norm{\Ldeuxd[T]}{\GRAD((\widehat{{d_t\bu}})_T
        - d_t\bu)}^2
      \right)\\
      &\qquad
      +
      \sum_{T\in\Th}
      \sum_{F\in \calF_T}
      h_F\norm{\Ldeuxd[F]}{d_t\bu
        - (\widehat{{d_t\bu}})_F}^2
      \overset{\eqref{def:ve.fI_2.tilde}}=\frakN_2,
    \end{align*}
    where, in the last bound, we have used a continuous trace inequality.
    Plugging this last estimate along with those obtained before for $\frakI_{2,1}, \frakI_{2,2}$, and $\frakI_{2,3,1}$ into \eqref{eq:ve.fI_2} concludes the proof of \eqref{eq:ve.fI_2.final}.
    \medskip\\
    \underline{(iii) \emph{Estimate of $\frakI_3$.}}
    We next show that
    \begin{equation}\label{eq:ve.fI_3.final}
      \frakI_3
      \leq C {H(\bu)}
      - \frac{1}{2}\nu C_a \norm{{1,h}}{\uline{\be}_h}^2,
    \end{equation}
    where $C$ is a constant independent of $\nu$ and $h$ and $C_a$ is the coercivity constant of $a_h$ (cf. \eqref{eq:ns:stab.h}).
    We use  the definition \eqref{eq:lh:form} of $\ell_h$ with $\bphi = \LAP \bu$ and add and subtract $\nu\int_\Omega\LAP {\bu}\cdot {\be}_h$ to get
    \begin{align*}
      \frakI_3& = 
      - \nu\int_\Omega \LAP {\bu}\cdot (\bR_h^k\underline{\be}_h -{\be}_h )
      - \nu\int_\Omega \LAP {\bu}\cdot {\be}_h
      - \nu a_h(\hhointerph{\bu},\uline{\be}_h)
      -\nu a_h(\uline{\be}_h,\uline{\be}_h)
      \notag\\
      \overset{\eqref{eq:Eh.a}}&=
      - \nu\sum_{T\in\Th}
      \int_T
      \cancel{ \bpi_T^{k-1}\LAP{\bu}\cdot (\bR_T^k\uline{\be}_T- {\be}_T ) }
      - \nu \sum_{T\in\Th}\int_T (\LAP{\bu} -\bpi_T^{k-1}\LAP{\bu} )\cdot (\bR_T^k\underline{\be}_T -{\be}_T)
      \notag
      \\
      &\quad
      + \mathcal{E}_{a,h}(\bu;\uline{\be}_h)
      -\nu a_h(\uline{\be}_h,\uline{\be}_h)
      \notag
      \\
      &\le
      \nu\sum_{T\in\Th}
      h_T \norm{\bL^2(T)}{\LAP{\bu} - \bpi_T^{k-1}\LAP{\bu}} \norm{1,T}{\uline{\be}_T}
      + \norm{1,h,{*}}{{\cal{E}}_{a,h}(\bu;\cdot)} \norm{1,h}{\uline{\be}_h}
      - \nu C_a \norm{1,h}{\uline{\be}_h}^2
      \\
      &\leq
      C\nu
      \left(
      \sum_{T\in\Th}
      h_T^2\norm{\bL^2(T)}{\LAP{\bu} -\bpi_T^{k-1}\LAP{\bu} }^2
      + \norm{1,h,{*}}{{\cal{E}}_{a,h}(\bu;\cdot)}^2
      \right)
      -\frac{1}{2}\nu C_a
      \norm{1,h}{\uline{\be}_h}^2,
    \end{align*}
      where we have used \eqref{ineq:rtn:consis} in the cancellation,
      Cauchy--Schwarz inequalities along with \eqref{ineq:rtn:bound} for the first term, the definition \eqref{eq:dual.norm} of $\norm{1,h,*}{{\cdot}}$ for the second term, and the coercivity \eqref{eq:ns:stab.h} for the last term, while the conclusion follows using the generalized Young inequality 
        \begin{equation}\label{ineq.young.eps}
          ab\leq \epsilon a^2 + \frac{1}{4\epsilon} b^2
        \end{equation}
      for the first and second term with $\epsilon$ selected so as to make the contribution $\frac12C_a\norm{1,h}{\uline{\be}_h}^2$ appear.
    Observe that the constant $C$ appearing in the last inequality is indeed independent of $\nu$ and $h$ and recall the definition \eqref{def:H_u} of $H(\bu)$ to obtain \eqref{eq:ve.fI_3.final}.
    \medskip\\
    \underline{(iv) \emph{Estimate of $\frakI_4$.}}
    We finally show that
    \begin{equation}\label{eq:ve.fI_4.final}
      \frakI_4
      =0.
    \end{equation}
    Using the definitions \eqref{def:b_h} of $b_h$ and \eqref{eq:lh:form} of $\ell_h$, we get
    \begin{equation*}
      \frakI_4
      =
      \sum _{T\in \calT_h}\left\{ \int_T  \GRAD p \cdot \bR_T^k\uline{\be}_T
      +\int_T   D_T^k \uline{\be}_T p_T \right\}
      =
      \sum _{T\in \calT_h}\left \{- \int_T    p \cancel{(\DIV \bR_T^k\uline{\be}_T)}
      + \int_T \cancel{ D_T^k \uline{\be}_T} p_T \right\}
      =0,
    \end{equation*}
    where, in the second step, we have integrated by parts element by element the first term and used the fact that, for all $\sigma\in\Fhs^{\rm i}$, $\llbracket p\rrbracket_\sigma = 0$ (because $p: (0,\tF)\to \Hun\cap P$) and $\llbracket \bR_h^k\uline{\be}_h\rrbracket_\sigma \cdot \bn_\sigma = 0$, as well as $\bR_h^k\uline{\be}_h \cdot \bn_\sigma = 0$ on $\partial \Omega$ (consequence of \eqref{eq:darcyT:weak:bd}),
    while, to conclude, we have used the fact that $\DIV \bR_T^k\uline{\be}_T=D_T^k \uline{\be}_T$ by \eqref{eq:darcyT:weak:a} 
      and $D_T^k \uline{\be}_T \overset{\eqref{def:error.split}}= D_T^k \uline{\bu}_T - D_T^k \hat{\uline{\bu}}_T = 0$, the conclusion being a consequence of \eqref{eq:nstokes:discrete} (test with $(\uline{\boldsymbol{0}},q_h)$ and let $q_h$ span $\Poly{k}(\Th)$ to infer $D_T^k \uline{\bu}_T = 0$ for all $T \in \Th$) and of \eqref{eq:DT.commuting} along with $\DIV\bu = 0$ (which give $D_T^k \hat{\uline{\bu}}_T = \pi_T^k ( \DIV\bu ) = 0$ for all $T \in \Th$).
    \medskip\\
    \underline{(v) \emph{Conclusion.}}
    Using the fact that $\frac{1}{2} {d_t}\norm{R,h}{\uline{\be}_h}^2=a_{R,h}(d_t \uline{\be}_h,\uline{\be}_h)$ and gathering the estimates \eqref{eq:ve.fI_1.final}, \eqref{eq:ve.fI_2.final}, \eqref{eq:ve.fI_3.final}, and \eqref{eq:ve.fI_4.final}, we obtain
    \[
    \begin{aligned}
      &\frac{1}{2}\frac{d}{dt}\norm{R,h}{\uline{\be}_h}^2
      +
      \frac{1}{2}\nu C_a
      \norm{{1,h}}{\uline{\be}_h}^2
      \\
      &\quad\lesssim
      \sum_{T\in \Th}  \norm{{\tR,T}}{\uline{\be}_T}^2
      \left( 1 + \seminorm{\bW^{1,\infty}(T)}{\bu}+ h_T\gamma_k\norm{\Linftyd[T]}{\bu}
      +
      h_T\widetilde{\delta}_{0k}
      \seminorm{\bW^{1,\infty}(T)}{\bu}^2
      \right)
      \\
      &\qquad +
      \nu\left(
      \sum_{T\in\Th}
      h_T^2\norm{\Ldeux[T]}{\LAP{\bu} -\bpi_T^{k-1}\LAP{\bu} }^2
      +
      \|{\cal{E}}_{a,h}(\bu;\cdot)\|_{1,h,{*}}^2
      \right)
      + \frakN_1
      + \frakN_2.
    \end{aligned}
    \]
    Applying a Gronwall inequality (see, e.g., \cite[Lemma 6.9]{Ern.Guermond:04}) and observing that $\uline{\bu}_h(0)=\uline{\bI}_h^k\bu_0$ implies $\uline{\be}_h(0)=\bzero$ concludes the proof of \eqref{ineq:main.theo}.
\end{proof}

The rest of this section is devoted to the proof of \eqref{eq:ve.fI_1.final}.

\begin{lemma}[Estimate of $\frakI_1$]\label{lem:fI.1.vel}
  Estimate \eqref{eq:ve.fI_1.final} holds.
\end{lemma}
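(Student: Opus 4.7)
The plan is to expand $\frakI_1$ by substituting $\uline{\bu}_h = \hhointerph{\bu} + \uline{\be}_h$ into $t_h(\uline{\bu}_h,\uline{\bu}_h,\uline{\be}_h)$, and rearrange the bulk of $t_h$ so it pairs term-by-term with $\ell_h((\bu\cdot\GRAD)\bu,\uline{\be}_h) = \int_\Omega ((\bu\cdot\GRAD)\bu)\cdot\bR_h^k\uline{\be}_h$. The key structural facts to exploit are that $\bR_h^k\uline{\be}_h \in \RTN{k}(\Ths)$ has continuous normal components across every simplicial face (from \eqref{eq:darcyT:weak:bd} and single-valuedness of face unknowns) and is divergence-free on each simplex: testing \eqref{eq:nstokes:discrete} with $(\uline{\boldsymbol{0}},q_h)$ for arbitrary $q_h\in\Poly{k}(\Th)$ yields $D_T^k\uline{\bu}_T = 0$, while $D_T^k\hhointerpT{\bu} = \pi_T^k(\DIV\bu) = 0$ by \eqref{eq:DT.commuting}, so \eqref{eq:darcyT:weak:a} gives $\DIV\bR_T^k\uline{\be}_T = D_T^k\uline{\be}_T = 0$.

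Expanding the two $\bR_h^k\uline{\bu}_h$ factors as $(\bu + \bbeta_h) + \bR_h^k\uline{\be}_h$ and cancelling the consistent piece against $\ell_h((\bu\cdot\GRAD)\bu,\uline{\be}_h)$ produces four types of contributions: \emph{(a)~approximation terms} linear in $\bbeta_h$ or $\hhointerpT{\bu}-\bu$, bounded by Hölder inequalities with weight $\seminorm{\bW^{1,\infty}(T)}{\bu}$ together with $\norm{\Ldeuxd[T]}{\bR_T^k\uline{\be}_T}\lesssim\norm{\tR,T}{\uline{\be}_T}$ from \eqref{eq:L2.hho<=RT} and Young's inequality, producing the first target term plus the first two sums in \eqref{def:ve.fI_1.tilde}; \emph{(b)~the $\GRAD\bu$-contracted error term} $\int_\Omega ((\bR_h^k\uline{\be}_h\cdot\GRAD)\bu)\cdot\bR_h^k\uline{\be}_h$, bounded directly by $\seminorm{\bW^{1,\infty}(T)}{\bu}\norm{\tR,T}{\uline{\be}_T}^2$; \emph{(c)~the cubic error term} $\int_\Omega ((\bR_h^k\uline{\be}_h\cdot\GRAD)\bR_h^k\uline{\be}_h)\cdot\bR_h^k\uline{\be}_h$ which, once combined with the upwind jump contributions of $t_h$, reduces via simplex-wise integration by parts (using the divergence-freeness and normal continuity of $\bR_h^k\uline{\be}_h$) to a non-negative upwind dissipation that is discarded per Remark \ref{rem:upwindnorm}; and \emph{(d)~the delicate "transport of error" term} $\int_\Omega ((\bu+\bbeta_h)\cdot\GRAD\bR_h^k\uline{\be}_h)\cdot\bR_h^k\uline{\be}_h$, which is the reason the new penalty $t_{T,\sigma}^k$ is introduced.

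For (d), I would first shed the $\bbeta_h$ piece (its contribution lands in $\frakN_1$) and replace $\bu|_T$ by its element mean $\bu_T^0 \coloneq \bpi_T^0\bu_T$; the cost $O(h_T\seminorm{\bW^{1,\infty}(T)}{\bu})$ of this replacement produces, after Young, the $h_T\widetilde{\delta}_{0k}\seminorm{\bW^{1,\infty}(T)}{\bu}^2$-weighted target contribution together with the $h_T\widetilde{\delta}_{0k}\norm{\bL^\infty(T)}{\bu}^2\bigl(\norm{\bL^2(T)}{\GRAD\bbeta_T}^2 + \norm{\bL^2(T)}{\GRAD(\bu-\bpi_T^k\bu)}^2\bigr)$ terms in \eqref{def:ve.fI_1.tilde}. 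What remains is $\sum_T\int_T (\bu_T^0\cdot\GRAD\bR_T^k\uline{\be}_T)\cdot\bR_T^k\uline{\be}_T$, whose integrand lies in $\Polyd{k-1}(\TTs)$ and hence coincides with its own $\bpi_{\TTs}^{k-1}$-projection. By \eqref{lem:rhoOp.d} it decomposes as $\GRAD\potOp{k}(\cdot)$ plus a remainder in $\Goly{{\rm c},k-1}(\TTs)$: the gradient part is exactly what $t_{T,\sigma}^k$ penalizes, so after Cauchy--Schwarz and Young its contribution is absorbed into the penalty sum already contained in $t_h(\uline{\bu}_h,\uline{\bu}_h,\uline{\be}_h)$ (itself part of $\frakI_1$); the $\Goly{{\rm c},k-1}(\TTs)$-remainder is $\bL^2(T)$-bounded by the original integrand through \eqref{lem:rec.golygammas.eq.c}, and its interior simplicial face contributions are handled by discrete trace inequalities combined with the simplex Poincaré inequality of Lemma \ref{lem:poincare.simplex}. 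The main obstacle is precisely this handling of (d): without the scalar potential decomposition from Lemma \ref{lem:rhoOp}, the piecewise-constant convection of the error would generate face residuals that can only be absorbed by paying a factor $\nu^{-1}$ in the viscous term, thereby destroying Reynolds-semi-robustness.
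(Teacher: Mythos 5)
Your decomposition puts the genuinely delicate contribution in the wrong bucket. Writing $\bR_h^k\uline{\bu}_h=(\bu+\bbeta_h)+\bR_h^k\uline{\be}_h$ in both slots, the term in which the gradient falls on the reconstruction error, namely $\sum_{T}\int_T(\bR_T^k\uline{\bu}_T\cdot\GRAD)\bbeta_T\cdot\bR_T^k\uline{\be}_T$, ends up inside your bucket (a), which you propose to dispatch by H\"older with weight $\seminorm{\bW^{1,\infty}(T)}{\bu}$ and Young. That cannot work: the derivative is on $\bbeta_T$, not on $\bu$, and $\norm{\bL^2(T)}{\GRAD\bbeta_T}$ is only $O(h^{k})$, so a direct bound yields $O(h^{2k})$ after Young, a full order short of the $h_T\seminorm{\bH^1(\TTs)}{\bbeta_T}^2$ contribution allowed in $\frakN_1$. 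This is precisely the term the paper's whole mechanism exists for: one integrates by parts to move the derivative onto $\bR_T^k\uline{\be}_T$ (the face residues joining the upwind terms), reduces to $\int_T(\bu_T^0\cdot\GRAD)\bR_T^k\uline{\be}_T\cdot\bbeta_T$, decomposes the integrand as $\GRAD\potOp{k}(\cdot)+\bc$ with $\bc\in\Goly{{\rm c},k-1}(\TTs)$, and exploits that $\bbeta_T$ is divergence-free with matching normal data so that only the jumps $\llbracket\potOp{k}(\cdot)\rrbracket_\sigma$ paired with the small quantity $\bbeta_T\cdot\bn_\sigma$ survive; their squares are then absorbed by the negative part of the penalty, while $\bc$ is handled through the moment condition \eqref{eq:darcyT:weak:ab}.

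Conversely, the term you single out as ``the reason the penalty is introduced'', $\int_\Omega((\bu+\bbeta_h)\cdot\GRAD)\bR_h^k\uline{\be}_h\cdot\bR_h^k\uline{\be}_h$, does not need $\potOp{k}$ at all: the advecting field $\bu+\bbeta_h=\bR_h^k\hat{\uline{\bu}}_h$ is divergence-free with continuous normal components, so simplex-wise integration by parts reduces it to face terms in $\llbracket\bR_h^k\uline{\be}_h\rrbracket_\sigma\cdot\lbrace\bR_h^k\uline{\be}_h\rbrace_\sigma$ that the upwind dissipation absorbs, exactly as for your (c); the same applies to $\int_T(\bu_T^0\cdot\GRAD)\bR_T^k\uline{\be}_T\cdot\bR_T^k\uline{\be}_T$ since $\bu_T^0$ is constant. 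Moreover, your proposed treatment of the $\Goly{{\rm c},k-1}(\TTs)$ remainder when the test function is $\bR_T^k\uline{\be}_T$ rather than $\bbeta_T$ would only give $\norm{\bL^2(T)}{\bc}\,\norm{\bL^2(T)}{\bR_T^k\uline{\be}_T}\lesssim h_T^{-1}\norm{\Linftyd[T]}{\bu}\norm{\tR,T}{\uline{\be}_T}^2$, whose $h_T^{-1}$ factor cannot be fed into the Gronwall argument. Finally, you never address the consistency part of the penalty itself, $t_{T,\sigma}^k(\uline{\bu}_T,\hat{\uline{\bu}}_T,\uline{\be}_T)$: bounding it requires the key fact \eqref{lem:rhoOp.e} that $\potOp{k}$ maps $\Polyd{k-1}(T)$ into $\Poly{k+1}(T)$-type spaces without interior jumps, so that $\llbracket\potOp{k}((\bu_T^0\cdot\GRAD)\bpi_T^k\bu)\rrbracket_\sigma=0$ can be inserted before estimating. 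Without these repairs the argument does not close.
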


\begin{proof} 
  We begin by expanding $\ell_h$ and $t_h$ according to the respective definitions \eqref{eq:lh:form} and \eqref{eq:th}, then add and subtract $\sum_{T \in\Th} \int_T (\bR_T^k\hhointerpT{\bu} \cdot \GRAD ) \bu \cdot \bR_T^k\uline{\be}_T +\sum_{T \in\Th} \int_T  (\bR_T^k\uline{\bu}_T \cdot \GRAD)  \bu \cdot \bR_T^k\uline{\be}_T$ and recall the definitions \eqref{def:error.split} of $\uline{\be}_h$ and \eqref{def:bbeta.T} of $\bbeta_T$ to write
  \begin{equation}\label{eq:ve.fI_1}
    \begin{aligned}
      \frakI_1
      &= \ell_h((\bu \cdot \GRAD ) \bu,\uline{\be}_h)
      - t_h(\uline{\bu}_h,\uline{\bu}_h,\uline{\be}_h)
      \\      
      &=
      -\sum_{T \in\Th}\int_T (\bbeta_T \cdot \GRAD ) \bu \cdot \bR_T^k\uline{\be}_T 
      - \sum_{T \in\Th}\int_T ( \bR_T^k \uline{\be}_T\cdot \GRAD ) \bu   \cdot \bR_T^k\uline{\be}_T
      \\
      &\quad
      -\sum_{T \in\Th} \int_T  (\bR_T^k\uline{\bu}_T \cdot \GRAD) (\bR_T^k\uline{\bu}_T - \bu) \cdot \bR_T^k\uline{\be}_T
      \\
      &\quad
      + \sum_{\sigma\in\Fhs^{\rm i}}\int_\sigma (\bR_h^k\uline{\bu}_h\cdot\bn_\sigma) \llbracket \bR_h^k\uline{\bu}_h - \bu\rrbracket_\sigma \cdot \lbrace \bR_h^k\uline{\be}_h\rbrace_\sigma
      \\
      &\quad
      - \sum_{\sigma\in\Fhs^{\rm i}}\int_\sigma  \frac{1}{2} |\bR_h^k\uline{\bu}_h\cdot\bn_\sigma| \llbracket \bR_h^k\uline{\bu}_h- \bu\rrbracket_\sigma \cdot \llbracket \bR_h^k\uline{\be}_h\rrbracket_\sigma
      - \sum_{T \in\Th}\sum_{\sigma\in\Fhs[T]^{\rm i}} t_{T,\sigma}^k(\uline{\bu}_T,\uline{\bu}_T,\uline{\be}_T)
      \\
      &\eqcolon 
      \frakI_{1,1} + \frakI_{1,2} + \frakI_{1,3}+ \frakI_{1,4} + \frakI_{1,5} + \frakI_{1,6},
    \end{aligned}
  \end{equation}
  where we have additionally used the fact that $\llbracket \bu\rrbracket_\sigma=\bzero$ for all $\sigma\in\Fhs^{\rm i}$ to insert this quantity into the fourth and fifth terms.
  \medskip\\
  \underline{(i) \emph{Estimate of $\frakI_{1,1} + \frakI_{1,2}$.}}
  Using a H\"{o}lder inequality with exponents $(2,\infty,2)$ followed by Young's inequality, we obtain
  \[
    \frakI_{1,1}
    \lesssim
    \sum _{T\in \Th} \seminorm{\bW^{1,\infty}(T)}{\bu} \norm{\bL^{2}(T)}{\bbeta_T}^2
    + \sum _{T\in \Th} \seminorm{\bW^{1,\infty}(T)}{\bu} \norm{{\tR,T}}{\uline{\be}_T}^2
    \]
  where we have additionally used the definition \eqref{def:normRT} of $\norm{\tR,T}{{\cdot}}$ in the second addend.
  We bound $|\frakI_{1,2}|$ using a H\"{o}lder inequality with exponents $(2,\infty,2)$ to get
  \[
    \frakI_{1,2}\leq
    \sum _{T\in \Th}
    \seminorm{\bW^{1,\infty}(T)}{\bu}
    \norm{{\tR,T}}{\uline{\be}_T}^2.
  \]
    Gathering the above bounds and recalling the definition \eqref{def:ve.fI_1.tilde} of $\frakN_1$, we obtain
    \begin{equation}\label{eq:ve.fI_11_12}
      \frakI_{1,1} + \frakI_{1,2}
      \lesssim\frakN_1 
      + \sum _{T\in \Th} \seminorm{\bW^{1,\infty}(T)}{\bu} \norm{{\tR,T}}{\uline{\be}_T}^2.
    \end{equation}
  \medskip\\
  \underline{(ii) \emph{Estimate of $\frakI_{1,3} + \frakI_{1,4} + \frakI_{1,5} + \frakI_{1,6}$.}}
  Using an element-by-element integration by parts along with the fact that $\DIV \bR_T^k\uline{\bu}_T=0$ for all $T\in \Th$ 
(use $(\uline{\boldsymbol{0}},q_h)$ with $q_h \in \Poly{k}(\Th)$ in \eqref{eq:nstokes:discrete} and recall the definition \eqref{def:b_h} of $b_h$ to infer $D_T^k \uline{\bu}_T = 0$ for all $T \in \Th$, and plug this result into \eqref{eq:darcyT:weak:a} to conclude),
  we get
  \[
  \begin{aligned}
    \frakI_{1,3}+ \frakI_{1,4} 
    &=
    \sum_{T \in\Th} \int_T  (\bR_T^k\uline{\bu}_T \cdot \GRAD) \bR_T^k\uline{\be}_T  \cdot (\bR_T^k\uline{\bu}_T - \bu)\\
    &\quad
    - \sum_{\sigma\in\Fhs^{\rm i}}\int_\sigma (\bR_h^k\uline{\bu}_h\cdot\bn_\sigma) \llbracket\bR_h^k\uline{\be}_h \rrbracket_\sigma \cdot \lbrace \bR_h^k\uline{\bu}_h - \bu  \rbrace_\sigma.
  \end{aligned}
  \]
  Adding and subtracting
  \[
  \sum_{T \in\Th}   \int_T  (\bR_T^k\uline{\bu}_T \cdot \GRAD) \bR_T^k\uline{\be}_T  \cdot \bR_T^k\hhointerpT{\bu} + \sum_{\sigma\in\Fhs^{\rm i}}\int_\sigma (\bR_h^k\uline{\bu}_h\cdot\bn_\sigma) \llbracket\bR_h^k\uline{\be}_h \rrbracket_\sigma \cdot \lbrace   \bR_h^k\hat{\uline{\bu}}_h \rbrace_\sigma,
  \]
  recalling the definitions \eqref{def:error.split} of $\uline{\be}_h$ and \eqref{def:bbeta.T} of $\bbeta_h$, the following is obtained:
  \begin{align*}
    \frakI_{1,3}+ \frakI_{1,4} 
    =
    &
    \cancel{\sum_{T \in\Th}  \int_T  (\bR_T^k\uline{\bu}_T \cdot \GRAD) \bR_T^k\uline{\be}_T  \cdot \bR_T^k\uline{\be}_T}
    - \sum_{\sigma\in\Fhs^{\rm i}}\int_\sigma (\bR_h^k\uline{\bu}_h\cdot\bn_\sigma) \llbracket\bR_h^k\uline{\be}_h \rrbracket_\sigma \cdot \lbrace   \bbeta_h \rbrace_\sigma\\
    &
    \hspace{-8pt} 
    -  \cancel{\sum_{\sigma\in\Fhs^{\rm i}}\int_\sigma (\bR_h^k\uline{\bu}_h\cdot\bn_\sigma) \llbracket\bR_h^k\uline{\be}_h \rrbracket_\sigma \cdot \lbrace   \bR_h^k\uline{\be}_h \rbrace_\sigma}
    + \sum_{T \in\Th}   \int_T  (\bR_T^k\uline{\bu}_T \cdot \GRAD) \bR_T^k\uline{\be}_T  \cdot \bbeta_T,
  \end{align*}
  where the first and third terms are cancelled using the integration by parts formula \cite[Eq.~(2)]{Botti.Di-Pietro.ea:19} and the fact that $\DIV \bR_T^k\uline{\be}_T=0$.
  Summing $\frakI_{1,5}$ to the previous expression, adding and subtracting $\frac{1}{2}\sum_{\sigma\in\Fhs^{\rm i}}\int_\sigma |\bR_h^k\uline{\bu}_h\cdot\bn_\sigma| \llbracket \bR_h^k\hat{\uline{\bu}}_h\rrbracket_\sigma \cdot \llbracket \bR_h^k\uline{\be}_h\rrbracket_\sigma$, and rearranging, it is inferred that
  \[
    \begin{aligned}
      &\frakI_{1,3}+ \frakI_{1,4} + \frakI_{1,5}
      \\
      &\quad=
      \sum_{T \in\Th}   \int_T  (\bR_T^k\uline{\bu}_T \cdot \GRAD) \bR_T^k\uline{\be}_T  \cdot \bbeta_T
      - \sum_{\sigma\in\Fhs^{\rm i}}\int_\sigma (\bR_h^k\uline{\bu}_h\cdot\bn_\sigma) \llbracket\bR_h^k\uline{\be}_h \rrbracket_\sigma \cdot \lbrace   \bbeta_h \rbrace_\sigma
      \\
      &\qquad
      - \frac{1}{2} \sum_{\sigma\in\Fhs^{\rm i}}\int_\sigma  |\bR_h^k\uline{\bu}_h\cdot\bn_\sigma| |\llbracket \bR_h^k\uline{\be}_h \rrbracket_\sigma |^2
      - \frac{1}{2} \sum_{\sigma\in\Fhs^{\rm i}}\int_\sigma |\bR_h^k\uline{\bu}_h\cdot\bn_\sigma| \llbracket \bbeta_h \rrbracket_\sigma \cdot \llbracket \bR_h^k\uline{\be}_h\rrbracket_\sigma
      \\
      &\quad\leq
      \sum_{T \in\Th}  \int_T (\bR_T^k\uline{\bu}_T \cdot \GRAD) \bR_T^k\uline{\be}_T  \cdot \bbeta_T
      + \sum_{\sigma\in\Fhs^{\rm i}}\int_\sigma |\bR_h^k\uline{\bu}_h\cdot\bn_\sigma |\left| \llbracket\bR_h^k\uline{\be}_h \rrbracket_\sigma \cdot \lbrace   \bbeta_h \rbrace_\sigma\right|
      \\
      &\qquad
      - \frac{1}{2} \sum_{\sigma\in\Fhs^{\rm i}}\int_\sigma |\bR_h^k\uline{\bu}_h\cdot\bn_\sigma| |\llbracket \bR_h^k\uline{\be}_h \rrbracket_\sigma |^2
      + \frac{1}{2} \sum_{\sigma\in\Fhs^{\rm i}}\int_\sigma |\bR_h^k\uline{\bu}_h\cdot\bn_\sigma| \left | \llbracket \bbeta_h \rrbracket_\sigma \cdot \llbracket \bR_h^k\uline{\be}_h\rrbracket_\sigma\right |.
    \end{aligned}
  \]
  Using the generalized Young inequality \eqref{ineq.young.eps}  for the second and fourth terms with $\epsilon$ respectively equal to $1$ and $\frac12$, so that the third term cancels out, we finally get
  \begin{equation}\label{eq:ve.fI_13.fI_14.fI_15.2}
    \begin{aligned}
      \frakI_{1,3}+ \frakI_{1,4} + \frakI_{1,5}
      &\le
      \sum_{T \in\Th}  \int_T  (\bR_T^k\uline{\bu}_T \cdot \GRAD) \bR_T^k\uline{\be}_T  \cdot \bbeta_T
      + \sum_{\sigma\in\Fhs^{\rm i}}\int_\sigma |\bR_h^k\uline{\bu}_h\cdot\bn_\sigma ||\lbrace   \bbeta_h \rbrace_\sigma|^2
      \\
      &\quad
      + {\frac14} \sum_{\sigma\in\Fhs^{\rm i}}\int_\sigma  |\bR_h^k\uline{\bu}_h\cdot\bn_\sigma| | \llbracket \bbeta_h \rrbracket_\sigma |^2,
    \end{aligned}
  \end{equation}
  Denote by $\Th[\sigma]\subset\Th$ the set collecting the (one or two) mesh elements that share $\sigma\in\Fhs^{\rm i}$.
  To bound the second term in the right hand side of \eqref{eq:ve.fI_13.fI_14.fI_15.2}, we recall the definition $\eqref{def:error.split}$ of $\uline{\be}_h$ and use a triangle inequality to write
  \[
    \begin{aligned}
      &\int_\sigma |\bR_h^k\uline{\bu}_h\cdot\bn_\sigma | |\lbrace \bbeta_h \rbrace_\sigma|^2
      \\
      &\quad\leq
      \int_\sigma |\bR_h^k\uline{\be}_h\cdot\bn_\sigma | |\lbrace \bbeta_h \rbrace_\sigma|^2
      +
      \int_\sigma |\bR_h^k\uline{\hat{\bu}}_h\cdot\bn_\sigma | |\lbrace \bbeta_h \rbrace_\sigma|^2
      \\
      &\quad\lesssim
      \norm{\bL^{\infty}(\Th[\sigma])}{\bbeta_h}
        \norm{\bL^2(\sigma)}{\bR_h^k\uline{\be}_h{\cdot\bn_\sigma}}
        \norm{\bL^2(\sigma)}{\lbrace \bbeta_h \rbrace_\sigma}
      +
      \norm{\bL^{\infty}(\Th[\sigma])}{\bR_h^k\uline{\hat{\bu}}_h}
      \norm{\bL^2(\sigma)}{\lbrace \bbeta_h \rbrace_\sigma}^2
      \\
      &\quad\lesssim
      h_\sigma\seminorm{\bW^{1,\infty}(\Th[\sigma])}{\bu}
      \norm{{L^2}(\sigma) }{\bR_h^k\uline{\be}_h{\cdot\bn_\sigma}}^2
      +
      \norm{\bW^{1,\infty}(\Th[\sigma])}{\bu}
      \norm{\bL^2(\sigma) }{\lbrace \bbeta_h \rbrace_\sigma}^2,
    \end{aligned}
  \]
  where we have used H\"{o}lder inequalities with exponents {$(\infty,2,2)$} along with the fact that $\| \bn_\sigma\|_{\bL^\infty(\sigma)} \leq 1$
  in the second step and Young's inequality for the first term together with the approximation properties \eqref{ineq:rtn:approx.Wsp} of $\bR_{T}^k\circ\uline{\bI}_{T}^k$ with $(m,p,s)=(0,\infty,1)$ and the fact that $h_{T}\lesssim 1$ for $T\in \Th[\sigma]$ in the third step.
 We continue using a discrete trace inequality on $\sigma$ to write 
 $\norm{L^2(\sigma)}{\bR_h^k\uline{\be}_h\cdot\bn_\sigma}^2
 \lesssim h_\sigma^{-1} \norm{\bL^2(\Th[\sigma])}{\bR_h^k\uline{\be}_h}^2
 \overset{\eqref{def:normRT}} \lesssim h_\sigma^{-1} \norm{\tR,\Th[\sigma]}{\uline{\be}_h}^2$
 and a continuous trace inequality followed by $h_\sigma^{-1} \lesssim h_T^{-1}$ (consequence of mesh regularity) to infer
  \begin{equation}\label{ineq.etaT.avg.sigma}
    \begin{aligned}
      \norm{\bL^2(\sigma) }{\lbrace   \bbeta_{T_\sigma} \rbrace_\sigma}^2
      \overset{\eqref{eq:trace.op},\,\eqref{eq.l1l2.bound}}
      &\lesssim
      \sum_{\tau\in\Ths[\sigma]}
      \norm{\bL^2(\sigma) }{(\bbeta_h)_{|_{\tau}}}^2
      \\
      &\lesssim
      \sum_{\tau\in\Ths[\sigma]}
      \left(
      h_\sigma^{-1}\norm{\Ldeuxd[\tau]}{ \bbeta_{\tau}}^2
      + h_\sigma \seminorm{\bH^{1}(\tau)}{ \bbeta_{\tau}}^2%
      \right)
      \\
      &\lesssim
      \sum_{T\in\Th[\sigma]}\left(
      h_T^{-1}\norm{\Ldeuxd[T]}{\bbeta_T}^2
      + h_T \seminorm{\bH^{1}(\Ths[T])}{ \bbeta_T }^2
      \right).
    \end{aligned}
  \end{equation}
  This gives
  \[
  \int_\sigma |\bR_h^k\uline{\bu}_h\cdot\bn_\sigma | |\lbrace \bbeta_h \rbrace_\sigma|^2
  \lesssim
  \seminorm{\bW^{1,\infty}(\Th[\sigma])}{\bu}\norm{\tR,\Th[\sigma]}{\uline{\be}_h}^2
  + \sum_{T\in\Th[\sigma]}{\norm{\bW^{1,\infty}(T)}{\bu}}\left(
  h_T^{-1}\norm{\Ldeuxd[T]}{\bbeta_T}^2
  + h_T \seminorm{\bH^{1}(\Ths[T])}{ \bbeta_T }^2
  \right).
  \]
  Plugging this bound into
  \eqref{eq:ve.fI_13.fI_14.fI_15.2}, observing that $\card(\Fhsi[T]) +\card(\Fh[T])\lesssim \card(\TTs)\lesssim 1$ by \eqref{ineq:card.IT.F}, and  bounding the last term in the right hand side of \eqref{eq:ve.fI_13.fI_14.fI_15.2} similarly as we just did with the second term, it is seen that
  \begin{equation}\label{eq:ve.fI_13.fI_14.fI_15.3}
    \begin{aligned}
      \frakI_{1,3}+ \frakI_{1,4} + \frakI_{1,5} 
      &\lesssim
      \sum_{T \in\Th}   \int_T  (\bR_T^k\uline{\bu}_T \cdot \GRAD) \bR_T^k\uline{\be}_T  \cdot \bbeta_T
      + \sum_{T \in\Th}
      \seminorm{\bW^{1,\infty}(T)}{\bu}
      \norm{\tR,T}{\uline{\be}_{T}}^2
      \\
      &\quad+
      \sum_{T \in\Th}
      \norm{\bW^{1,\infty}(T)}{\bu}
      \left(
      h_{T}^{-1}\norm{\Ldeuxd[T]}{ \bbeta_{T}}^2
      + h_{T} \seminorm{\bH^{1}(\TTs)}{ \bbeta_{T}}^2%
      \right).
    \end{aligned}
  \end{equation}
  We 
    use the following bound, proved at the end of this section, for the first term in the right hand side of \eqref{eq:ve.fI_13.fI_14.fI_15.3}:
  \begin{equation}\label{ineq:RtRtEta.bound}
    \begin{aligned}
      \left |  \int_T   (\bR_T^k\uline{\bu}_T \cdot \GRAD) \bR_T^k\uline{\be}_T  \cdot \bbeta_T \right|
      &\lesssim \norm{{\tR,T}}{\uline{\be}_T}^2 \left (
      \seminorm{\bW^{1,\infty}(T)}{\bu} + h_T\gamma_k\norm{\Linftyd[T]}{\bu}
      \right)
      + \seminorm{\bW^{1,\infty}(T)}{\bu} \norm{\bL^{2}(T)}{{\bbeta}_T}^2
      \\
      &\quad+ \left(
      \seminorm{\bW^{1,\infty}(T)}{\bu}
      + \gamma_{k} h_T^{-3}\norm{\Linftyd[T]}{\bu}
      \right)
      \norm{\bL^{2}(T)}{\hhointerpTT{\bu} - \bu}^2
      \\
      &\quad+
      \widetilde{\delta}_{0k}
      \sum_{\sigma\in\Fhs[T]^{\rm i}}
      \norm{L^2(\sigma)}
           { 
             \llbracket
             \potOp{k}(({\bu}^0_T\cdot \GRAD) \bR_T^k\uline{\be}_T)
             \rrbracket _\sigma 
           }
           \norm{L^2(\sigma)}{ \bbeta_T\cdot\bn_\sigma },
    \end{aligned}
  \end{equation}
  to finally get
  \begin{equation}\label{eq:ve.fI_13.fI_14.fI_15.4}
    \begin{aligned}
      \frakI_{1,3}+ \frakI_{1,4} + \frakI_{1,5} 
      &\lesssim
      \sum_{T\in \Th} \left[ \norm{{\tR,T}}{\uline{\be}_T}^2
        \left(
        \seminorm{\bW^{1,\infty}(T)}{\bu}+ h_T\gamma_k\norm{\Linftyd[T]}{\bu}
        \right)
      +
      \seminorm{\bW^{1,\infty}(T)}{\bu}
      \norm{\bL^{2}(T)}{{\bbeta}_T}^2
      \right]
      \\
      &\quad +
      \sum_{T\in \Th}\left(
        \seminorm{\bW^{1,\infty}(T)}{\bu}
        + \gamma_{k} h_T^{-3} \norm{\Linftyd[T]}{\bu}
        \right)
      \norm{\Ldeuxd[T]}{\hhointerpTT{\bu}-\bu}^2
      \\
      &\quad +
      \sum_{T\in \Th}
      \norm{\bW^{1,\infty}(T)}{\bu}
      \left(
      h_{T}^{-1}\norm{\Ldeuxd[T]}{ \bbeta_{T}}^2
      + h_{T}\seminorm{\bH^{1}(\TTs)}{ \bbeta_{T}}^2
      \right)
      \\
      &\quad +
      \widetilde{\delta}_{0k}
      \sum_{T\in \Th} \sum_{\sigma\in\Fhs[T]^{\rm i}}
      \norm{L^2(\sigma)}{ \llbracket        
        \potOp{k}
        (({\bu}^0_T\cdot \GRAD) \bR_T^k\uline{\be}_T)
        \rrbracket _\sigma }
      \norm{L^2(\sigma)}{\bbeta_T\cdot\bn_\sigma}.
    \end{aligned}
  \end{equation}

  To bound $\frakI_{1,6}$ in \eqref{eq:ve.fI_1}, we use the definition \eqref{def:error.split} of $\uline{\be}_h$ and the {linearity} of $t_{T,\sigma}^k$ in its second argument to write
  \begin{equation}\label{eq:ve.fI_6}
    \begin{aligned}
      \frakI_{1,6}
      &=
      -\sum_{T\in \Th} \sum_{\sigma\in\Fhs[T]^{\rm i}}
      t_{T,\sigma}^k( \uline{\bu}_T,\uline{\bu}_T,\uline{\be}_T)
      =
      -\sum_{T\in \Th} \sum_{\sigma\in\Fhs[T]^{\rm i}}
      \left(
      t_{T,\sigma}^k(\uline{\bu}_T,\uline{\hat{\bu}}_T,\uline{\be}_T)
      + t_{T,\sigma}^k(\uline{\bu}_T,\uline{\be}_T,\uline{\be}_T)
      \right)
      \\
      &\leq
      \sum_{T\in \Th} \sum_{\sigma\in\Fhs[T]^{\rm i}}
      \left(
      |t_{T,\sigma}^k(\uline{\bu}_T,\uline{\hat{\bu}}_T,\uline{\be}_T)|
      -t_{T,\sigma}^k(\uline{\bu}_T,\uline{\be}_T,\uline{\be}_T)
      \right)
      \\
      &\leq
      \frac{\widetilde{\delta}_{0k}}2\sum_{T\in \Th} \sum_{\sigma\in\Fhs[T]^{\rm i}}
      \norm{L^2(\sigma)}{%
          \llbracket
          \potOp{k} 
          (({\bu}_T^0 \cdot \GRAD) \bR_T^k\uline{\hat{\bu}}_T)
          \rrbracket_\sigma
        }^2
      \\
      &\qquad
      \boxed{%
        - \frac{\widetilde{\delta}_{0k}}2\sum_{T\in \Th} \sum_{\sigma\in\Fhs[T]^{\rm i}} \norm{L^2(\sigma)}{%
          \llbracket \potOp{k}(({\bu}_T^0 \cdot \GRAD) \bR_T^k\uline{\be}_T) \rrbracket_\sigma
        }^2,
      }
    \end{aligned}
  \end{equation}
  where, in the last step, we have used the definitions \eqref{eq:tT} of $t_{T,\sigma}^k$ and \eqref{eq:tilde.delta} of $\widetilde{\delta}_{0k}$, Young's inequality,
  and the fact that 
  $\bR_T^k\uline{\hat{\bu}}_T$ and $\bR_T^k\uline{\be}_T$ both belong to $\Polyd{k}(\TTs)$ (as proved in the item {(iv.b)} below),
  thus $\bpi_{\TTs}^{k-1}(({\bu}_T^0 \cdot \GRAD) \bR_T^k\uline{\hat{\bu}}_T) =({\bu}_T^0 \cdot \GRAD) \bR_T^k\uline{\hat{\bu}}_T$,
  and $\bpi_{\TTs}^{k-1}(({\bu}_T^0 \cdot \GRAD) \bR_T^k\uline{\be}_T)=({\bu}_T^0 \cdot \GRAD) \bR_T^k\uline{\be}_T$.
  Using the generalized Young inequality \eqref{ineq.young.eps} in the last term of \eqref{eq:ve.fI_13.fI_14.fI_15.4} with $\epsilon$ selected so as to compensate the boxed term in \eqref{eq:ve.fI_6} and summing the resulting relation to \eqref{eq:ve.fI_6}, we obtain
  \begin{equation}\label{eq:ve.fI_13.fI_14.fI_15.fI_16}
    \begin{aligned}
      &\frakI_{1,3}+ \frakI_{1,4} + \frakI_{1,5} + \frakI_{1,6}
      \\
      &\quad\lesssim
      \sum_{T\in \Th}\left[
      \norm{{\tR,T}}{\uline{\be}_T}^2
      \left(
      \seminorm{\bW^{1,\infty}(T)}{\bu}+ h_T\gamma_k\norm{\Linftyd[T]}{\bu}
      \right)
      + \seminorm{\bW^{1,\infty}(T)}{\bu} \norm{\bL^{2}(T)}{{\bbeta}_T}^2
      \right]
      \\
      &\qquad+
      \sum_{T\in \Th}\left( 
      \seminorm{\bW^{1,\infty}(T)}{\bu}
      + \gamma_{k}h_T^{-3} \norm{\Linftyd[T]}{\bu}
      \right)
      \norm{\Ldeuxd[T]}{\hhointerpTT{\bu}-\bu}^2
      \\
      &\qquad+
      \sum_{T\in \Th}
      \norm{\bW^{1,\infty}(T)}{\bu}
      \left(
      h_{T}^{-1}\norm{\Ldeuxd[T]}{ \bbeta_{T}}^2
      +
      h_{T}\seminorm{\bH^{1}(\TTs)}{ \bbeta_{T}}^2
      \right)
      \\
      &\qquad+
      \widetilde{\delta}_{0k}
      \sum_{T\in \Th} \sum_{\sigma\in\Fhs[T]^{\rm i}}
      \Big[
        \underbrace{\norm{L^2(\sigma)}{ \bbeta_T\cdot\bn_\sigma }^2}_{\eqcolon\mathfrak{A}(\sigma)}
        +
        \underbrace{%
          \norm{L^2(\sigma)}{%
            \llbracket
            \potOp{k}
            (({\bu}^0_T\cdot \GRAD) \bR_T^k\uline{\hat{\bu}}_T)
            \rrbracket _\sigma }^2
        }_{\eqcolon\mathfrak{B}(\sigma)}
        \Big].
    \end{aligned}
  \end{equation}

  We now proceed to bound the last term in the the right-hand side of \eqref{eq:ve.fI_13.fI_14.fI_15.fI_16}.
  We bound the first contribution as we did  with $\norm{\bL^2(\sigma) }{\lbrace   \bbeta_{T_\sigma} \rbrace_\sigma}^2$ in \eqref{ineq.etaT.avg.sigma}, that is,
  \begin{align}\label{ineq.etaT.sigma}
    \mathfrak{A}(\sigma)
    \lesssim
    {\sum_{T\in\Th[\sigma]}}\left(
    h_{T}^{-1}\norm{\Ldeuxd[T]}{ \bbeta_{T}}^2
    +
    h_{T}\seminorm{\bH^{1}(\Ths[T])}{ \bbeta_{T}}^2
    \right).
  \end{align}

  To bound $\mathfrak{B}(\sigma)$ in  \eqref{eq:ve.fI_13.fI_14.fI_15.fI_16}, observe that, since $(\bu_T^0 \cdot   \GRAD)\bpi_T^k{\bu} \in \Polyd{k-1}(T)$, using \eqref{lem:rhoOp.e}, we have $\potOp{{k}}((\bu_T^0 \cdot   \GRAD)\bpi_T^k{\bu} )\in  \Polyd{k}(T)$, and then
  ${\llbracket\potOp{k} (\bu_T^0 \cdot   \GRAD)\bpi_T^k{\bu}) \rrbracket_\sigma}\equiv 0$ for 
    all simplicial faces $\sigma\in\Fhs[T]^{\rm i}$ internal to any mesh element $T \in \Th$.
 Therefore, denoting by $\tau_1$ and $\tau_2$ the simplices in $\TTs$ that share $\sigma\in\Fhs[T]^{\rm i}$,
   it is inferred, letting for the sake of brevity $\varrho^{k}_{\tau_i} \coloneq (\varrho^{k}_{\TTs})_{|\tau_i}$,
   \[
   \begin{aligned}
     {\mathfrak{B}(\sigma)}
     &=
    \norm{L^2(\sigma)}{%
      \llbracket   \potOp{k}(({\bu}_T^0 \cdot   \GRAD) (\bR_T^k\uline{\hat{\bu}}_T - {\bpi_T^k{\bu}})) \rrbracket_\sigma
    }^2
    \\
    \overset{\eqref{eq:trace.op},\,\eqref{eq.l1l2.bound}}&\lesssim
    \sum_{i=1}^2
    \norm{L^2(\sigma)}{{\varrho^{k}_{\tau_i}} (({\bu}_T^0 \cdot \GRAD)( \bR_T^k\uline{\hat{\bu}}_T -\bpi_T^k{\bu}))}^2
    \\    
    &\lesssim
    \sum_{i=1}^2
    h_{\tau_i}^{-1}
    \norm{L^2(\tau_i)}{ \varrho^k_{\tau_i}(({\bu}_T^0 \cdot \GRAD)( \bR_T^k\uline{\hat{\bu}}_T- \bpi_T^k{\bu} ))}^2
    \\
    &\lesssim
    h_T^{-1}
    \norm{L^2(T)}{ \potOp{k}(({\bu}_T^0 \cdot \GRAD)( \bR_T^k\uline{\hat{\bu}}_T- \bpi_T^k{\bu} ))}^2
    \\
    \overset{\eqref{lem:rhoOp.b}}&\lesssim
    h_{T}
    \norm{\bL^2(T)}{ (({\bu}_T^0 \cdot \GRAD)( \bR_T^k\uline{\hat{\bu}}_T- \bpi_T^k{\bu} ))}^2
    \\
    \overset{\eqref{def:error.split},\,\eqref{eq.l1l2.bound}}&\lesssim
    h_{T}
    \norm{\bL^2(T)}{{\be}_T^0 \cdot \GRAD( \bR_T^k\uline{\hat{\bu}}_T- \bpi_T^k{\bu} )}^2
    +
    h_{T}
    \norm{\bL^2(T)}{\hat{\bu}_T^0 \cdot \GRAD( \bR_T^k\uline{\hat{\bu}}_T- \bpi_T^k{\bu} )}^2,
  \end{aligned}
   \]
   where we have used a discrete trace inequality in the third step and
   the fact that $h_{\tau_i}^{-1} \lesssim h_T^{-1}$ by mesh regularity along with $\tau_1 \cup \tau_2 \subset T$ in the fourth step.
   We continue adding and subtracting $\bu$ and using \eqref{eq.l1l2.bound} with $n=2$ to write
   \begin{equation}\label{eq:est.norm.jump.potOp}
     \begin{aligned}
       {\mathfrak{B}(\sigma)}
       &\lesssim
       h_{T} \norm{\bL^2(T)}{{\be}_T^0 \cdot \GRAD( \bR_T^k\uline{\hat{\bu}}_T- \bu )}^2
       +
       h_{T} \norm{\bL^2(T)}{{\be}_T^0 \cdot \GRAD( \bu- \bpi_T^k{\bu} )}^2
       \\
       &\quad+  h_{T}
       \norm{\bL^2(T)}{\hat{\bu}_T^0 \cdot \GRAD( \bR_T^k\uline{\hat{\bu}}_T- \bu )}^2
       +
       h_{T}
       \norm{\bL^2(T)}{\hat{\bu}_T^0  \cdot \GRAD( \bu- \bpi_T^k{\bu} )}^2
       \\
       &\lesssim
       h_{T}
       \norm{\bL^\infty(T)}{ \GRAD( \bR_T^k\uline{\hat{\bu}}_T- \bu )}
       \norm{\bL^2(T)}{{\be}_T}^2
       +
       h_{T}
       \norm{\bL^\infty(T)}{ \GRAD( \bu- \bpi_T^k{\bu} )}^2
       \norm{\bL^2(T)}{{\be}_T}^2
       \\
       &\quad
       +  h_{T}
       \norm{\bL^\infty(T)}{\hat{\bu}_T }^2
       \norm{\bL^2(T)}{\GRAD( \bR_T^k\uline{\hat{\bu}}_T- \bu )}^2
       +
       h_{T}
       \norm{\bL^\infty(T)}{\hat{\bu}_T }^2
       \norm{\bL^2(T)}{ \GRAD( \bu- \bpi_T^k{\bu} )}^2
       \\
       &\lesssim
       h_{T}
       \seminorm{\bW^{1,\infty}(T)}{\bu}^2
       \norm{{\rm R},T}{\uline{\be}_T }^2
       + h_{T}
       \norm{\bL^{\infty}(T)}{\bu}^2
       \seminorm{\bH^1(T)}{\bbeta_T}
       + h_{T}
       \norm{\bL^{\infty}(T)}{\bu}^2
       \norm{\bL^2(T)}{\GRAD( \bu- \bpi_T^k{\bu} )}^2,
     \end{aligned}
   \end{equation}
   where in the second step we have used H\"{o}lder inequalities with exponents $(2,\infty)$
   for the first two terms and with exponents $(\infty,2)$ for last two terms,
   then the $\bL^2$-boundedness of $\bpi_T^0$  in all terms.
   In the last step we have used: for the first term the approximation properties \eqref{ineq:rtn:approx.Wsp} of $\bR_T^k\circ\uline{\bI}_T^k$ with $(m,p,s)=(1,\infty,1)$ along with \eqref{eq:L2.hho<=RT};
   for the second term, the {$\bH^1$-boundedness of $\bpi_T^k$} and again \eqref{eq:L2.hho<=RT};
   for the third term, the definition \eqref{def:bbeta.T} of $\bbeta_T$,
   and invoked the $\bL^\infty$-boundedness of $\bpi_T$  for the third and fourth terms.
   \medskip\\
   \underline{(iii) \emph{Estimate of $\frakI_1$.}}
   Plugging \eqref{eq:est.norm.jump.potOp} along with  \eqref{ineq.etaT.sigma} into \eqref{eq:ve.fI_13.fI_14.fI_15.fI_16}, then using the fact that $\card(\Fhsi[T]) \lesssim \card(\TTs)\lesssim 1$ for $T\in \Th$ by \eqref{ineq:card.IT.F}, and using additionally the bound \eqref{eq:ve.fI_11_12}, we obtain in \eqref{eq:ve.fI_1}
  \[
    \frakI_1
    \lesssim
    \sum_{T\in \Th}  \norm{{\tR,T}}{\uline{\be}_T}^2
    \left(\seminorm{\bW^{1,\infty}(T)}{\bu}+ h_T\gamma_k\norm{\Linftyd[T]}{\bu}
    +
    h_T\widetilde{\delta}_{0k}
    \seminorm{\bW^{1,\infty}(T)}{\bu}^2
    \right) +
    \frakN_1,
  \]
  which is precisely \eqref{eq:ve.fI_1.final}.
  \medskip\\
  \underline{(iv) \emph{Proof of \eqref{ineq:RtRtEta.bound}.}}
  To conclude the proof {of Lemma \ref{lem:fI.1.vel}}, it only remains to prove \eqref{ineq:RtRtEta.bound}, which we do next.
  We begin using a triangle inequality as follows:
  \begin{multline}\label{ineq:rtk.bound.proof.1}
    \left |\int_T  (\bR_T^k\uline{\bu}_T \cdot \GRAD) \bR_T^k\uline{\be}_T  \cdot \bbeta_T \right|
    \\
    \leq
    \left |
    \int_T  \left((\bR_T^k\uline{\bu}_T -{\bu}_T)  \cdot \GRAD\right) \bR_T^k\uline{\be}_T  \cdot \bbeta_T 
    \right|
    +
    \left |
    \int_T  ({\bu}_T  \cdot \GRAD) \bR_T^k\uline{\be}_T  \cdot \bbeta_T 
    \right|
    \eqcolon
    \frakT_1+ \frakT_2.
  \end{multline}
  \underline{(iv.a) \emph{Estimate of $\frakT_1$.}}
  Adding the terms $\pm(\bR_T^k\hat{\uline{\bu}}_T - \hat{{\bu}}_T)$ in the inner parentheses of $\frakT_1$,  then
  recalling that $\uline{\be}_h = \uline{\bu}_h - \hat{\uline{\bu}}_h$ (cf. \eqref{def:error.split})
  and using a triangle inequality, we get
  \[ 
    \frakT_1
    \leq
    \left |
    \int_T  \left((\bR_T^k\uline{\be}_T -{\be}_T)  \cdot \GRAD\right) \bR_T^k\uline{\be}_T  \cdot \bbeta_T 
    \right|
    +
    \left |
    \int_T  \left((\bR_T^k\hhointerpT{\bu} -\hhointerpTT{\bu})  \cdot \GRAD\right) \bR_T^k\uline{\be}_T  \cdot \bbeta_T 
    \right|
    \eqcolon
    \frakT_{1,1} +\frakT_{1,2}.
    \] 
  To bound $\frakT_{1,1}$, we use a H\"{o}lder inequality with exponents $(2, 2, \infty)$ to obtain
  \begin{align*}
    \frakT_{1,1}
    &\leq
    \norm{\bL^{2}(T)}{\bR_T^k\uline{\be}_T -{\be}_T}
    \norm{\bL^{2}(T)}{\GRAD\bR_T^k\uline{\be}_T}
    \norm{\bL^{\infty}(T)}{{\bbeta}_T}\\
    &\lesssim
    h_T^2
    \norm{1,T}{\uline{\be}_T}
    \norm{\bL^{2}(T)}{\GRAD\bR_T^k\uline{\be}_T}
    \seminorm{\bW^{1,\infty}(T)}{\bu},
  \end{align*}
  where, in the second step, we have used \eqref{ineq:rtn:bound} and \eqref{ineq:rtn:approx.Wsp} with $(m,p,s)=(0,\infty,1)$ for the first and third factors, respectively, after recalling, for the latter, that $\bbeta_T = \bR_T^k \hat{\uline{\bu}}_T - \bu$ by
  \eqref{def:bbeta.T}.
    To proceed, we bound $\norm{1,T}{\uline{\be}_T}$ with \eqref{eq:L2.hho<=RT}, then use
  the inverse inequality \eqref{ineq:RT.inverse} along with the definition \eqref{def:normRT} of $\norm{\tR,T}{{\cdot}}$ to write $\norm{\bL^{2}(T)}{\GRAD\bR_T^k\uline{\be}_T} \lesssim h_T^{-1}\norm{\Ldeuxd[T]}{\bR_T^k\uline{\bv}_T} \le h_T^{-1} \norm{\tR,T}{\uline{\be}_T}$. Thus, we get
  \[
  \frakT_{1,1}
  \lesssim
  \norm{{\tR,T}}{\uline{\be}_T}^2
  \seminorm{\bW^{1,\infty}(T)}{\bu}.
  \]
  We bound $\frakT_{1,2}$
  using a H\"{o}lder inequality with exponents $(2, 2, \infty)$,
    then using the inverse inequality \eqref{ineq:RT.inverse} along with the definition \eqref{def:normRT} of $\norm{\tR,T}{{\cdot}}$ for the second factor,
    and recalling that $\bbeta_T = \bR_T^k \hat{\uline{\bu}}_T - \bu$ by \eqref{def:bbeta.T} to use the approximation properties \eqref{ineq:rtn:approx.Wsp} of $\bR_T^k\circ\underline{\bI}_T^k$ with $(m,p,s)=(0,\infty,1)$ for the third factor, thus obtaining
  \[
    \begin{aligned}
      \frakT_{1,2}
      &\lesssim
      \norm{\bL^{2}(T)}{\bR_T^k\hhointerpT{\bu} -\hhointerpTT{\bu}}
      \norm{{\tR,T}}{\uline{\be}_T}
      \seminorm{\bW^{1,\infty}(T)}{\bu}\\
      &\lesssim
      \seminorm{\bW^{1,\infty}(T)}{\bu}
      \norm{\bL^{2}(T)}{\bR_T^k\hhointerpT{\bu} -\hhointerpTT{\bu}}^2
      +
      \seminorm{\bW^{1,\infty}(T)}{\bu}
      \norm{{\tR,T}}{\uline{\be}_T}^2
      \\&\lesssim
      \seminorm{\bW^{1,\infty}(T)}{\bu}
      \left(
      \norm{\bL^{2}(T)}{\bbeta_T}^2
      + \norm{\bL^{2}(T)}{\bu -\hhointerpTT{\bu}}^2
      \right)
      + \seminorm{\bW^{1,\infty}(T)}{\bu}
      \norm{{\tR,T}}{\uline{\be}_T}^2,
    \end{aligned}
  \]
  where in the second step we have used Young's inequality while, in the third step, we have added $\pm \bu$ into the second term, used \eqref{eq.l1l2.bound} with $n=2$, and recalled \eqref{def:bbeta.T}.
    Gathering the above bounds for $\frakT_{1,1}$ and $\frakT_{1,2}$, we obtain
    \begin{equation}\label{eq:RtRtEta.T1}
      \frakT_1 \lesssim \seminorm{\bW^{1,\infty}(T)}{\bu}
      \left(
      \norm{\bL^{2}(T)}{\bbeta_T}^2
      + \norm{\bL^{2}(T)}{\bu -\hhointerpTT{\bu}}^2
      \right)
      + \seminorm{\bW^{1,\infty}(T)}{\bu}
      \norm{{\tR,T}}{\uline{\be}_T}^2.
    \end{equation}
  \medskip\\
  \noindent\underline{(iv.b) \emph{Estimate of $\frakT_2$ and conclusion.}}
  To bound $\frakT_2$ in \eqref{ineq:rtk.bound.proof.1}, recalling that ${\bu}_T^0\coloneq  \bpi_T^0{\bu}_T$ and using a triangle inequality, we have that
  \begin{equation}
    \frakT_2
    \leq
    \left | \int_T   (({\bu}_T -{\bu}_T^0) \cdot \GRAD) \bR_T^k\uline{\be}_T ) \cdot \bbeta_T \right |
    +
    \left | \int_T   ({\bu}_T^0\cdot \GRAD) \bR_T^k\uline{\be}_T  \cdot \bbeta_T \right |
    \eqcolon
    \frakT_{2,1} + \frakT_{2,2}.
    \label{ineq:rtk.bound.proof.I2}
  \end{equation}
  To treat $\frakT_{2,1}$, we add and subtract $( \hhointerpTT{\bu} - \hhointerpTT{\bu} ^0 )$,
  use  \eqref{def:error.split},
  and apply a triangle inequality to obtain
  \begin{equation}\label{ineq:rtk.bound.proof.I21}
    \frakT_{2,1}
    \leq
    \left | \int_T   (({\be}_T -{\be}_T^0) \cdot \GRAD) \bR_T^k\uline{\be}_T ) \cdot \bbeta_T \right |
    +
    \left | \int_T   ((\hhointerpTT{\bu} -\hhointerpTT{\bu}^0) \cdot \GRAD) \bR_T^k\uline{\be}_T ) \cdot \bbeta_T \right |
    \eqcolon
    \frakT_{2,1,1} + \frakT_{2,1,2}.
  \end{equation}
  To bound $\frakT_{2,1,1}$, we use a H\"{o}lder inequality with exponents $(2, 2,\infty)$,
  the approximation properties \eqref{eq:l2proj:error:cell} of  $\bpi^0_{T}$ with $(l,m,r,s)=(0,0,2,1)$ followed by the definition \eqref{eq:norm.1T} of $\norm{1,T}{{\cdot}}$ to write $\norm{\bL^2(T)}{\be_T - \be_T^0} \lesssim h_T \norm{\bL^2(T)}{\GRAD \be_T} \le h_T\norm{1,T}{\uline{\be}_T}$,
  the inverse inequality \eqref{ineq:RT.inverse},
  and the approximation properties \eqref{ineq:rtn:approx.Wsp} of $\bR_T^k\circ\underline{\bI}_T^k$ with $(m,p,s)=(0,\infty,1)$ {to write $\norm{\bL^\infty(T)}{\bbeta_T} \lesssim h_T \seminorm{\bW^{1,\infty}(T)}{\bu}$ and thus} get
  \begin{align*}
    \frakT_{2,1,1}
    &\lesssim
    h_T
    \norm{1,T}{\uline{\be}_T}
    \norm{\bL^{2}(T)}{\bR_T^k\uline{\be}_T}
    \seminorm{\bW^{1,\infty}(T)}{\bu}
    \overset{\eqref{eq:L2.hho<=RT}}\lesssim
    \norm{{\tR,T}}{\uline{\be}_T}^2
    \seminorm{\bW^{1,\infty}(T)}{\bu}.
  \end{align*}
  Similarly, using a H\"{o}lder inequality with exponents $(\infty, 2, 2)$ followed by a triangle inequality, we obtain
  \[ 
  \begin{aligned}
    \frakT_{2,1,2}
    &\leq
    \left(\norm{\bL^{\infty}(T)}{\hhointerpTT{\bu} -{\bu}}  
    + \norm{\bL^{\infty}(T)}{{\bu} -\hhointerpTT{\bu}^0}
    \right)
    \norm{\bL^{2}(T)}{\GRAD\bR_T^k\uline{\be}_T}
    \norm{\bL^{2}(T)}{{\bbeta}_T}\nonumber\\
    &\lesssim
    \seminorm{\bW^{1,\infty}(T)}{\bu}
    \norm{\bL^{2}(T)}{\bR_T^k\uline{\be}_T}
    \norm{\bL^{2}(T)}{{\bbeta}_T}
    \lesssim
    \seminorm{\bW^{1,\infty}(T)}{\bu}
    \norm{{\tR,T}}{\uline{\be}_T}^2
    +
    \seminorm{\bW^{1,\infty}(T)}{\bu}
    \norm{\bL^{2}(T)}{{\bbeta}_T}^2,
  \end{aligned}
  \] 
  where, in the second step, we have used the approximation properties \eqref{eq:l2proj:error:cell} of $\bpi^{k^\star}_T$ and $\bpi^0_{T}$ with $(l,m,r,s)=(k^\star,0,\infty,1)$ and
  $(l,m,r,s)=(0,0,\infty,1)$, respectively, and then
  the inverse inequality \eqref{ineq:RT.inverse} while, in the last step,
  we have used the definition \eqref{def:normRT} of $\norm{\tR,T}{{\cdot}}$ and Young's inequality.

  To bound $\frakT_{2,2}$ in \eqref{ineq:rtk.bound.proof.I2}, we use the fact that $\DIV \bu = 0$, the commutation property \eqref{eq:DT.commuting}, and \eqref{eq:darcyT:weak:a} to infer that $\DIV \bR_T^k\hhointerpT{\bu} = 0$.
  In addition, we have $\DIV \bR_T^k\uline{\bu}_T = 0$ 
  (take $\uline{\bv}_h = \uline{\bzero}$ and $q_h \in \Poly{k}(\Th)$ supported in $T$ in  \eqref{eq:nstokes:discrete} and use \eqref{def:b_h} and \eqref{eq:darcyT:weak:a}).
  By \eqref{def:error.split}, the above relations yield $\DIV \bR_T^k\uline{\be}_T=0$; this implies 
  that
  $\bR_T^k\uline{\be}_T \in \Polyd{k}(\TTs)$ (see \cite[Corollary 2.3.1, p. 90]{Boffi.Brezzi.ea:13}),
  thus
  $({\bu}^0_T\cdot \GRAD) \bR_T^k\uline{\be}_T \in \Polyd{k-1}(\TTs)$, and the proof is finished  for $k=0$, since $\bR_T^0\uline{\be}_T \in \Polyd{0}(\TTs)$ and thus $({\bu}^0_T\cdot \GRAD) \bR_T^0\uline{\be}_T = \bzero$, i.e., $\frakT_{2,2} = 0$.
  If {$k\ge 1$}, on the other hand, we recall the definition \eqref{def:rhoOp} of $\potOp{k}$ and observe that
  \[
  \Polyd{k-1}(\TTs)\ni
  ({\bu}^0_T\cdot \GRAD) \bR_T^k\uline{\be}_T
  = \GRAD g + \bc,
  \]
  with $g \coloneq \potOp{k}(({\bu}^0_T\cdot \GRAD) \bR_T^k\uline{\be}_T)$ and
  $\bc \coloneq (\Id - \GRAD\potOp{k})(({\bu}^0_T\cdot \GRAD) \bR_T^k\uline{\be}_T)$.
  Using a triangle inequality, we split $\frakT_{2,2}$ as follows:
  \begin{align}\label{ineq:rtk.bound.proof.I22}
    \frakT_{2,2}
    &\leq
    \left | \int_T \GRAD g 
    \cdot \bbeta_T
    \right |
    +
    \left | \int_T \bc 
    \cdot \bbeta_T
    \right |
    \eqcolon
    \frakT_{2,2,1} + \frakT_{2,2,2}.
  \end{align}
  Applying an integration by parts to the integral contained in $\frakT_{2,2,1}$ and using the fact that $\bbeta_T$ has continuous normal trace across simplicial faces $\sigma \in \Fhs[T]^{\rm i}$, we obtain
  \begin{align*}
    \frakT_{2,2,1}
    &\leq
    \sum_{\sigma\in\Fhs[T]^{\rm i}}
    \left |
    \int_\sigma \llbracket g \rrbracket_\sigma\,(\bbeta_T\cdot \bn_\sigma)
    \right |
    +
    \sum_{F\in \calF_T}\sum_{\sigma  \in \Fhs[F]}
    \left |
    \cancel{\int_\sigma g\,(\bbeta_T\cdot \bn_\sigma)}
    \right |
    +
    \left |
    \cancel{\int_T g\, (\DIV \bbeta_T)}
    \right |\\
    &\leq
    \sum_{\sigma\in\Fhs[T]^{\rm i}}
    \norm{L^2(\sigma)}{ \llbracket g\rrbracket _\sigma }
    \norm{L^2(\sigma)}{\bbeta_T\cdot\bn_\sigma }\\
    &=
    \widetilde{\delta}_{0k}
   \sum_{\sigma\in\Fhs[T]^{\rm i}}
    \norm{L^2(\sigma)}{ \llbracket \potOp{k}(({\bu}^0_T\cdot \GRAD) \bR_T^k\uline{\be}_T)\rrbracket _\sigma }
    \norm{L^2(\sigma)}{ \bbeta_T\cdot\bn_\sigma },
  \end{align*}
  where  the cancellations in the first step follow using
  the fact that $\DIV \bbeta_T=\DIV(\bR_T^k\hhointerpT{\bu} - \bu)=0$ (since $\DIV \bu =\DIV \bR_T^k\hhointerpT{\bu} = 0$) and the boundary condition \eqref{eq:darcyT:weak:bd}, 
    while the second step follows from a Cauchy--Schwarz inequality.
  We now proceed to bound $\frakT_{2,2,2}$ in \eqref{ineq:rtk.bound.proof.I22}.
  By \eqref{lem:rhoOp.d} with $l=k-1$, we have that $\bc\in\Goly{{\rm c},k-1}(\TTs)$ and, recalling  that, by definition, $\Goly{{\rm c},k-1}(\TTs)$  
  is the  trivial space when $k=1$, we get $\frakT_{2,2,2}=0$.
  For $k\ge 2$, on the other hand, recalling the definition \eqref{def:gammak} of $\gamma_k$, the fact that 
$\bbeta_T \coloneq \bR_T^k\hhointerpT{\bu} - \bu$, 
and using the condition \eqref{eq:darcyT:weak:ab}, we obtain
  \begin{equation}\label{ineq:rtk.bound.proof.I222}
    \begin{aligned}
      \frakT_{2,2,2}
      &=
      \gamma_k\left | \int_T\bc\cdot (\hhointerpTT{\bu}-\bu)
      \right |
      \\
      &\leq
      \gamma_k
      \norm{\Ldeuxd[T]}{\bc}
      \norm{\Ldeuxd[T]}{\hhointerpTT{\bu}-\bu}
      \\
      &\lesssim
      \gamma_k
      \norm{\Ldeuxd[T]}{({\bu}^0_T\cdot \GRAD) \bR_T^k\uline{\be}_T)}
      \norm{\Ldeuxd[T]}{\hhointerpTT{\bu}-\bu}
      \\
      &\leq
      \gamma_k
      \left(\norm{\Linftyd[T]}{{\be}^0_T} + \norm{\Linftyd[T]}{\hhointerpTT{\bu}^0}\right)
      \norm{\Ldeuxd[T]}{\GRAD \bR_T^k\uline{\be}_T}
      \norm{\Ldeuxd[T]}{\hhointerpTT{\bu}-\bu}
      \\
      &\lesssim
      \gamma_k
      h_T^{-\frac{d}{2}}\norm{\Ldeuxd[T]}{{\be}_T} 
      \norm{\Ldeuxd[T]}{\GRAD \bR_T^k\uline{\be}_T}
      \norm{\Ldeuxd[T]}{\hhointerpTT{\bu}-\bu}
      \\
      &\qquad+
      \gamma_k\norm{\Linftyd[T]}{\bu}
      h_T^{\frac32}\norm{\Ldeuxd[T]}{\GRAD\bR_T^k\uline{\be}_T}
      h_T^{-\frac32}\norm{\Ldeuxd[T]}{\hhointerpTT{\bu}-\bu},
    \end{aligned}
  \end{equation}
  where we have used the  Cauchy--Schwarz inequality in the second step,
  the fact that 
  $\norm{\Ldeuxd[T]}{(\Id - \GRAD\potOp{k})\bq}
  \leq
  \norm{\Ldeuxd[T]}{\bq}
  +
  \norm{\Ldeuxd[T]}{\GRAD\potOp{k}\bq}
  $
  for $\bq\in\Polyd{k-1}(\TTs)$
  along with the bound \eqref{lem:rhoOp.b} with $l=k-1$
  in the third step,
  a H\"{o}lder inequality with exponents $(\infty,2)$  for the first factor
  along with \eqref{def:error.split} 
  and then a triangle inequality
  in the fourth step,
  and, for the fifth step, we have used the discrete Lebesgue inequality \eqref{eq:rev:disc.emb.pw} with $(\alpha,\beta, X)=(\infty,2, T)$ and the $L^2$-boundeness of $\bpi_T^0$ for the first addend and the $L^\infty$-boundeness of $\bpi_T^0$ for the second addend.
  To further bound the first term in the right-hand side of \eqref{ineq:rtk.bound.proof.I222},  we use the inverse inequality \eqref{ineq:RT.inverse} along with the definition \eqref{def:normRT} of $\norm{{\tR,T}}{{\cdot}}$ to write
  \begin{multline*}
    \gamma_k
    h_T^{-\frac{d}{2}}\norm{\Ldeuxd[T]}{{\be}_T} 
    \norm{\Ldeuxd[T]}{\GRAD \bR_T^k\uline{\be}_T}
    \norm{\Ldeuxd[T]}{\hhointerpTT{\bu}-\bu}
    \\
    \lesssim
    \gamma_k
    h_T^{-\frac{d}{2}-1}
    \norm{\Ldeuxd[T]}{{\be}_T}
    \norm{{\tR,T}}{\uline{\be}_T}
    \norm{\Ldeuxd[T]}{\hhointerpTT{\bu}-\bu}
    \lesssim
    \norm{{\tR,T}}{\uline{\be}_T}^2
    \seminorm{\bW^{1,\infty}(T)}{\bu},
  \end{multline*}
  where, in the last step, we have used the fact that $\gamma_k\leq1$, the inequality \eqref{eq:L2.hho<=RT} for the first factor, and the approximation properties \eqref{eq:l2proj:error:cell} of $\bpi_{T}^{k^\star}$ with $(l,m,r,s)=(k^\star,0,2,1)$ along with
    $\norm{L^2(T)}{\nabla \bu}\lesssim h_T^{\frac{d}{2}} \norm{L^\infty(T)}{\nabla \bu}$
  for the last factor.
  For the second term in the right-hand side of \eqref{ineq:rtk.bound.proof.I222}, we use the inverse inequality \eqref{ineq:RT.inverse} along with the definition \eqref{def:normRT} of $\norm{{\tR,T}}{{\cdot}}$, and then Young's inequality, thus we obtain
  \begin{multline*}
    \gamma_k\norm{\Linftyd[T]}{\bu}
    h_T^{\frac32}\norm{\Ldeuxd[T]}{\GRAD\bR_T^k\uline{\be}_T}
    h_T^{-\frac32}\norm{\Ldeuxd[T]}{\hhointerpTT{\bu}-\bu}
    \\
    \lesssim
    \gamma_k
    h_T\norm{\Linftyd[T]}{\bu}
    \norm{{\tR,T}}{\uline{\be}_T}^2
    +
    \gamma_k
    h_T^{-3}
    \norm{\Linftyd[T]}{\bu}
    \norm{\Ldeuxd[T]}{\hhointerpTT{\bu}-\bu}^2.
  \end{multline*}
  Using the above estimates in \eqref{ineq:rtk.bound.proof.I222}, we get
  \begin{align*}
    \frakT_{2,2,2}
    \lesssim
    \seminorm{\bW^{1,\infty}(T)}{\bu}
    \norm{{\tR,T}}{\uline{\be}_T}^2
    +
    \gamma_k
    h_T\norm{\Linftyd[T]}{\bu}
    \norm{{\tR,T}}{\uline{\be}_T}^2
    +
    \gamma_k
    h_T^{-3}
    \norm{\Linftyd[T]}{\bu}
    \norm{\Ldeuxd[T]}{\hhointerpTT{\bu}-\bu}^2.
  \end{align*}    
  Therefore plugging the bounds for $\frakT_{2,2,1}$ and $\frakT_{2,2,2}$ into \eqref{ineq:rtk.bound.proof.I22}, 
  the bounds $\frakT_{2,1,1}$ and $\frakT_{2,1,2}$ into \eqref{ineq:rtk.bound.proof.I21}, 
  and then using the result in \eqref{ineq:rtk.bound.proof.I2},
  we get
  \[
  \begin{multlined}
    \frakT_2
    \lesssim 
    \seminorm{\bW^{1,\infty}(T)}{\bu}
    \norm{{\tR,T}}{\uline{\be}_T}^2
    +
    \norm{\Linftyd[T]}{\bu}
    \norm{\Ldeuxd[T]}{\bbeta_T}^2
    +
    \gamma_k
    h_T\norm{\Linftyd[T]}{\bu}
    \norm{{\tR,T}}{\uline{\be}_T}^2
    \\
     +
    \gamma_k
    h_T^{-3}
    \norm{\Linftyd[T]}{\bu}
    \norm{\Ldeuxd[T]}{\hhointerpTT{\bu}-\bu}^2  
    +\widetilde{\delta}_{0k}
    \sum_{\sigma\in\Fhs[T]^{\rm i}}
    \norm{L^2(\sigma)}{ \llbracket \potOp{k}(({\bu}^0_T\cdot \GRAD) \bR_T^k\uline{\be}_T)\rrbracket _\sigma }
    \norm{L^2(\sigma)}{ \bbeta_T\cdot\bn_\sigma }.
  \end{multlined}
  \]
  Thus, plugging the above estimate along with \eqref{eq:RtRtEta.T1} into \eqref{ineq:rtk.bound.proof.1}, we conclude the proof of \eqref{ineq:RtRtEta.bound}.
\end{proof}

\section{Numerical tests}\label{sec:ntest1}

In this section we  verify  numerically the proposed method for general meshes of the unit square domain $\Omega = (0,1)^2$. 
The analytical solution $(\bu,p)$ in \eqref{eq:nstokes:strong} is taken from \cite{DeFrutos.ea:2019,HanHou:2021},
 with velocity components and pressure as follows
\begin{align*}
\bu(\bx)& \coloneq \frac{6+4\cos(4t)}{10}\begin{pmatrix}8\sin ^2(\pi x_1)(2x_2(1-x_2)(1-2x_2)) \\ - 8\pi\sin (2 \pi x_1)(x_2(1-x_2))^2\end{pmatrix},\\
p(\bx) &\coloneq \frac{6+4\cos(4t)}{10}\sin (\pi x_1)\cos (\pi x_2).
\end{align*}
For each element $T \in \Th$, we construct its simplicial submesh $\Ths[T]$
by adding an internal node which corresponds to the geometrical center $\bx_T$ of the element $T$ and construct the simplicial mesh $\Ths[T]$ in such a way that all simplices in $\Ths[T]$ have the vertex $\bx_T$ in common (this construction fulfills the assumptions made in Section \ref{sec:setting:mesh}).
We set $k=1$ and consider computations  over three $h$-refined mesh families (Cartesian, hexagonal and Voronoi type). 
Figure \ref{fig:meshes:coarsest} shows the coarsest mesh for each family.
As for the temporal discretization, an implicit/explicit (IMEX) BDF2 scheme
is used, in which
$t_h(2\uline{\bu}_h^{n-1}-\uline{\bu}_h^{n-2},\uline{\bu}_h^n,\uline{\bv}_h)$
is used in the convective term \eqref{eq:th}.
The interpolate of the analytical solution is used in the first two steps as the initialization of the algorithm.
We set as a time step $\Delta t=10^{-3}$, and the final time as $\tF=2$.
  This choice has been numerically verified to make the error in time negligible with respect to the error in space for all the considered meshes.
Our implementation is based on the \texttt{HArDCore} library\footnote{\url{https://github.com/jdroniou/HArDCore}} which makes extensive use of the linear algebra \texttt{Eigen} open-source library (see \url{http://eigen.tuxfamily.org}), and we use the linear direct solver Pardiso \cite{Schenk.Gartner.ea:01}.
We monitor the following quantities in Table \ref{tbl:ntest1}:
$N_{\rm dof}$  denoting the number of discrete unknowns and nonzero entries of the global system;
$\norm{L^\infty(\bL^2)}{\uline{\be}_h}$,
the discrete infinity $L^2(\Omega)$-norm of the velocity  error
$\uline{\be}_h\coloneq\uline{\bu}_h - \uline{\bI}_h^k\bu$;
$\norm{\sharp,\Delta t}{\uline{\be}_h}$, the discrete $L^2$-energy-upwind-norm of the velocity  error defined as follows

$$
\norm{\sharp,\Delta t}{\uline{\be}_h}^2
\coloneq
  \Delta t \sum _{n=2}^{N_{\tF}} \left(\nu  \norm{1,h}{\uline{\be}_h^n}^2
   +
    \frac{1}{2}  \sum_{\sigma\in\Fhs^{\rm i}}\int_\sigma |\bR_h^k(2\uline{\bu}_h^{n-1}-\uline{\bu}_h^{n-2})\cdot\bn_\sigma| |\llbracket \bR_h^k\uline{\be}_h^n \rrbracket_\sigma |^2\right),
$$
where $N_{\tF}$ is the total number of time steps.
The error norms are accompanied by their
corresponding spatial Estimated Order of Convergence (EOC) computed using successive spatial refinement steps.
To confirm the independence of the velocity error on $\nu^{-1}$,
we solve numerically the problem for  $\nu\in\{10^{-2},10^{-4},10^{-6},10^{-10}\}$.
Recalling the Corollary \ref{cor.vel.convergence},
and the Remarks 
\ref{rem:upwindnorm} and \ref{rem:LinftyL2norm},
we expect to get a convergence rate of $1.5$  in both norms (since we have set $k=1$).
The results collected in Table \ref{tbl:ntest1} 
show that the convergence rate of $\norm{L^\infty(\bL^2)}{\uline{\be}_h}$ is always greater than 1.5 for all values of $\nu$, and 
the convergence rate of $\norm{\sharp,\Delta t}{\uline{\be}_h}$
is asymptotically close  to 1.5.
Moreover for small values of $\nu$, the velocity errors hold practically unchanged, that is to say, they are independent of $\nu^{-1}$. This is consistent with our theoretical results, see Theorem \ref{error.vel.theorem} and its Corollary \ref{cor.vel.convergence}.

\begin{figure}[!htb]
  \centering
  \begin{minipage}[b]{0.32\textwidth}
    \centering
    \includegraphics[width=0.80\textwidth]{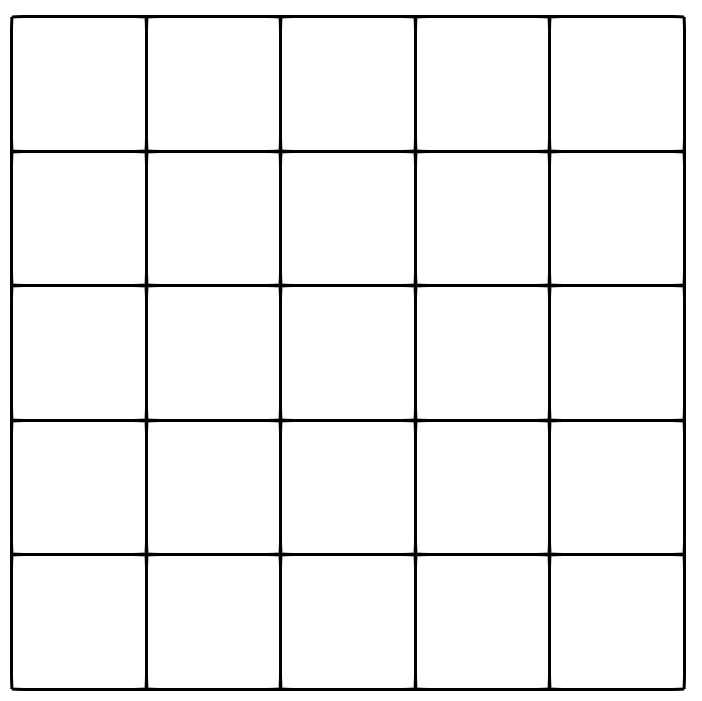} 
    \subcaption{Cartesian.}
  \end{minipage}%
  \begin{minipage}[b]{0.32\textwidth}
    \centering
    \includegraphics[width=0.80\textwidth]{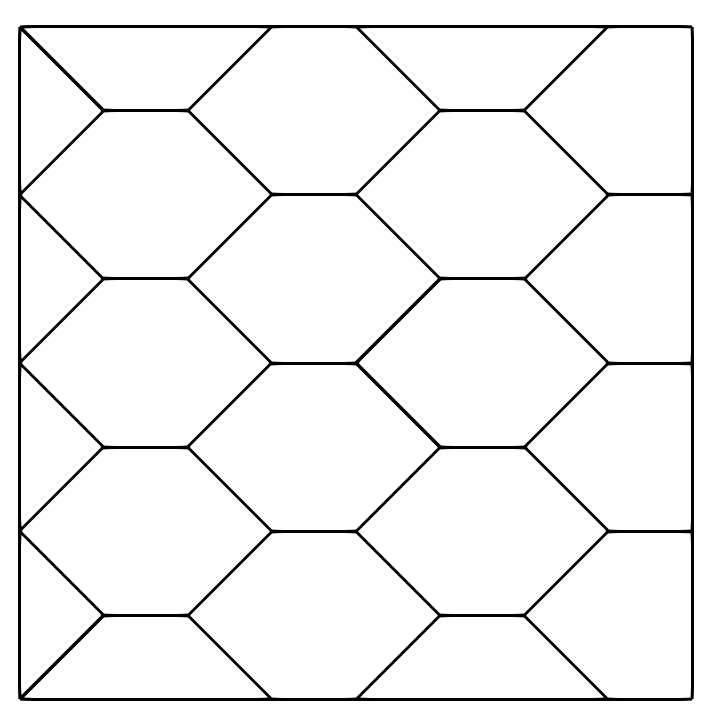} 
    \subcaption{Hexagonal.}
  \end{minipage}%
  \begin{minipage}[b]{0.32\textwidth}
  \centering
  \includegraphics[width=0.80\textwidth]{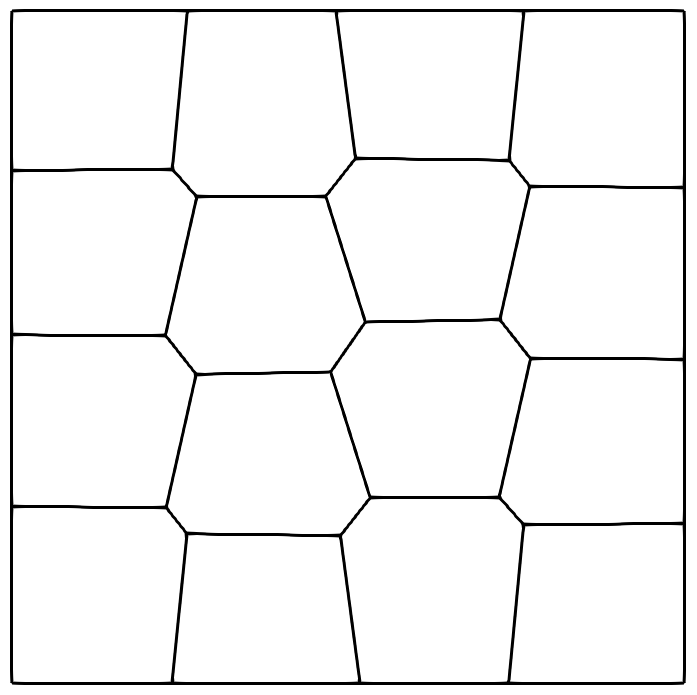} 
  \subcaption{Voronoi.}
  \end{minipage}%
  \caption{Coarsest meshes used in Section \ref{sec:ntest1}. \label{fig:meshes:coarsest}}
\end{figure}

\begin{figure}[!htb]
  \centering
  \begin{minipage}[b]{0.9\textwidth}
    \centering
    \includegraphics[width=0.80\textwidth]{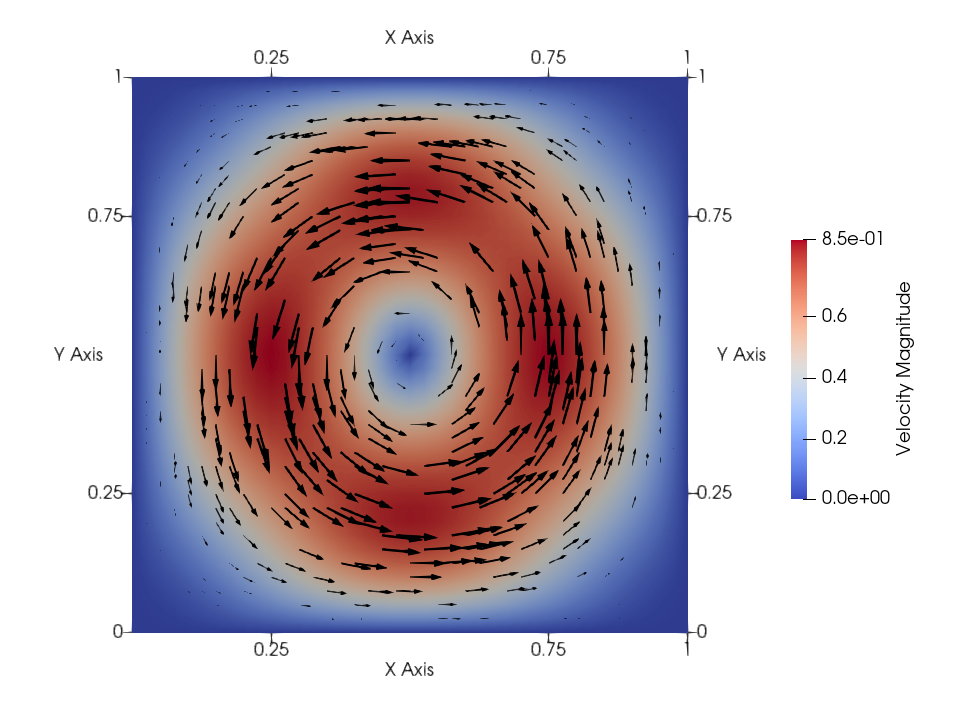} 
  \end{minipage}%
 \caption{Velocity solution at $t=\tF$ of  Section \ref{sec:ntest1}. \label{fig:sol.ntest1}}
\end{figure}

\renewcommand\floatplace[1]{}
\begin{table}
  \centering
  \begin{tabular}{ccccc|cccc|}
    \toprule
    $N_{\rm dof}$      &
    $\norm{L^\infty(\bL^2)}{\uline{\be}_h}$& EOC &
    $\norm{\sharp,\Delta t}{\uline{\be}_h}$& EOC &
    $\norm{L^\infty(\bL^2)}{\uline{\be}_h}$   & EOC  &
    $\norm{\sharp,\Delta t}{\uline{\be}_h}$   & EOC  \\
    \midrule
    \multicolumn{1}{c}{}&
    \multicolumn{4}{c|}{Cartesian, $\nu=10^{-2}$}&
    \multicolumn{4}{c|}{Cartesian,  $\nu=10^{-4}$}\\
    \midrule
     385    &   1.88E-01  &      --      & 2.58E-01     &    --     & 3.69E-01       & --     & 2.09E-01        &-- \\
     1620   &   2.64E-02  &      2.82    & 1.04E-01     &   1.30    & 7.21E-02       & 2.34   & 9.86E-02        & 1.08\\
     6640   &   3.24E-03  &      3.04    & 3.74E-02     &   1.48   & 1.30E-02       & 2.48   & 3.87E-02        & 1.36 \\
     26880  &   3.95E-04  &      3.04    & 1.30E-02     &   1.53   & 2.25E-03       & 2.54   & 1.43E-02        & 1.43\\
     108160 &   4.88E-05  &      3.02    & 4.36E-03     &   1.57   & 3.85E-04       & 2.54   & 5.17E-03        & 1.47\\
   \midrule
    \multicolumn{1}{c}{}&
    \multicolumn{4}{c|}{Cartesian, $\nu=10^{-6}$}&
    \multicolumn{4}{c|}{Cartesian,  $\nu=10^{-10}$}\\
    \midrule
   385 & 3.72E-01 & --   &  2.07E-01 & --   & 3.72E-01 & --   & 2.07E-01 & --   \\       
  1620 & 7.45E-02 & 2.31 &  9.82E-02 & 1.07 & 7.45E-02 & 2.31 & 9.82E-02 & 1.07 \\
  6640 & 1.51E-02 & 2.31 &  3.85E-02 & 1.35 & 1.51E-02 & 2.31 & 3.85E-02 & 1.35 \\
 26880 & 3.22E-03 & 2.23 &  1.43E-02 & 1.43 & 3.25E-03 & 2.22 & 1.43E-02 & 1.43 \\
 108160  & 6.85E-04 & 2.23 &  5.18E-03 &  1.46 & 7.16E-04& 2.19 & 5.18E-03 & 1.46 \\
    \bottomrule
   \toprule
     \multicolumn{1}{c}{}&
    \multicolumn{4}{c|}{Hexagonal, $\nu=10^{-2}$}&
    \multicolumn{4}{c|}{Hexagonal,  $\nu=10^{-4}$}\\
    \midrule
   386   & 3.01E-01 & --    & 4.14E-01   & --    & 5.77E-01  & --   &  3.54E-01 & --  \\        
   1436  & 5.68E-02 & 2.42  & 1.88E-01   &1.14   & 1.48E-01  & 1.97 &  1.79E-01 & 0.99 \\
   5560  & 8.67E-03 & 2.70  & 7.17E-02   &1.39   & 2.48E-02  & 2.57 &  7.05E-02 & 1.34 \\
   21872 & 1.14E-03 & 2.93  & 2.54E-02   &1.50   &3.48E-03   & 2.83 &  2.57E-02 & 1.46 \\
  \midrule
    \multicolumn{1}{c}{}&
    \multicolumn{4}{c|}{Hexagonal, $\nu=10^{-6}$}&
    \multicolumn{4}{c|}{Hexagonal,  $\nu=10^{-10}$}\\
    \midrule

   386   & 5.83E-01 & --   & 3.53E-01 & --   & 5.83E-01 & --   & 3.53E-01 & -- \\
   1436  & 1.51E-01 & 1.95 & 1.79E-01 & 0.98 & 1.51E-01 & 1.95 & 1.79E-01 & 0.98 \\
   5560  & 2.64E-02 & 2.51 & 7.03E-02 & 1.34 & 2.64E-02 & 2.51 & 7.03E-02 & 1.34 \\
   21872 & 4.25E-03 & 2.63 & 2.56E-02 & 1.46 & 4.27E-03 & 2.63 & 2.56E-02 & 1.46 \\

  \bottomrule
   \toprule
    \multicolumn{1}{c}{}&
    \multicolumn{4}{c|}{Voronoi, $\nu=10^{-2}$}&
    \multicolumn{4}{c|}{Voronoi,  $\nu=10^{-4}$}\\
    \midrule
     276   & 3.61E-01 & --    &  3.63E-01 & --   & 5.66E-01 & --   &  3.02E-01 & -- \\
     1228  & 5.11E-02 & 3.29  &  1.61E-01 & 1.37 & 1.15E-01 & 2.68 &  1.49E-01 & 1.19 \\
     5136  & 6.19E-03 & 2.95  &  5.99E-02 & 1.38 & 1.94E-02 & 2.49 &  5.67E-02 & 1.35 \\
     21032 & 7.60E-04 & 2.94  &  2.11E-02 & 1.46 & 3.01E-03 & 2.62 &  2.02E-02 & 1.45 \\
    \midrule
    \multicolumn{1}{c}{}&
    \multicolumn{4}{c|}{Voronoi, $\nu=10^{-6}$}&
    \multicolumn{4}{c|}{Voronoi,  $\nu=10^{-10}$}\\
    \midrule
    276   & 5.70E-01 & --    & 3.01E-01 & --   & 5.70E-01  & --   & 3.01E-01 & -- \\
    1228  & 1.17E-01 &  2.66 & 1.48E-01 & 1.19 & 1.17E-01  & 2.66 & 1.48E-01 & 1.19 \\
    5136  & 2.05E-02 &  2.44 & 5.64E-02 & 1.35 & 2.05E-02  & 2.44 & 5.64E-02 & 1.35 \\
    21032 & 3.55E-03 &  2.46 & 2.00E-02 & 1.45 & 3.56E-03  & 2.46 & 2.00E-02 & 1.45 \\
\bottomrule
 \end{tabular}
 \caption{Convergence rates for the numerical test of Section \ref{sec:ntest1} for $k=1$ using the Cartesian, hexagonal and the Voronoi mesh families
for values of $\nu \in \{10^{-2}, 10^{-4},10^{-6},10^{-10}\}$.
\label{tbl:ntest1}}
\end{table}

\section*{Acknowledgements}

The work of Daniel Castanon Quiroz was partially supported by UNAM-PAPIIT grant IA-101723.

Funded by the European Union  (ERC Synergy, NEMESIS, project number 101115663).
Views and opinions expressed are however those of the authors only and do not necessarily reflect those of the European Union or the European Research Council Executive Agency. Neither the European Union nor the granting authority can be held responsible for them.

\newpage
\appendix

\section{Local inequalities for piecewise polynomials on a submesh}\label{sec:appx.LE}

\begin{lemma}[Local discrete Lebesgue embedding for piecewise polynomials]\label{lemma:appx.LE.pw}
  Let $T\in\Th$ and, for an integer $l\ge 0$, let $q\in \Poly{l}(\TTs)$.
  Then, for all $(\alpha,\beta)\in [1,+\infty]$, it holds, with the convention that $\frac1\infty \coloneq 0$,
  \begin{equation}
    \|q\|_{L^{\alpha}(T)}
    \simeq h_T^{d\left(\frac1{\alpha}-\frac1{\beta}\right)}  \|q\|_{L^{\beta}(T)},
    \label{eq:rev:disc.emb.pw}
  \end{equation}
  where the hidden constant is independent of  $h_T$, $T$, and $q$, but possibly depends on $l$, $\alpha$, $\beta$, and the mesh regularity parameter. 
\end{lemma}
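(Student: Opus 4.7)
The plan is to reduce the statement to the analogous local discrete Lebesgue embedding on each individual simplex of the submesh $\TTs$, and then reassemble, exploiting the fact that the cardinality of $\TTs$ is uniformly bounded by mesh regularity.

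First I would recall the standard local discrete Lebesgue embedding on a single shape-regular simplex: for any $\tau\in\TTs$ and any $r\in\Poly{l}(\tau)$, and any $(\alpha,\beta)\in[1,+\infty]^2$,
\[
\|r\|_{L^\alpha(\tau)}
\simeq
h_\tau^{d\left(\frac{1}{\alpha}-\frac{1}{\beta}\right)}\|r\|_{L^\beta(\tau)},
\]
with hidden constants depending only on $l$, $\alpha$, $\beta$, and the shape-regularity of $\tau$ (this is the classical result, see, e.g., \cite[Lemma~1.25]{Di-Pietro.Droniou:20}, applied to the simplex rather than to the polytopal element). Since the submesh is constructed within a mesh regular in the sense of \cite[Definition 1.9]{Di-Pietro.Droniou:20}, one has $h_\tau\simeq h_T$ for every $\tau\in\TTs$, so the factor $h_\tau^{d(1/\alpha-1/\beta)}$ can be replaced by $h_T^{d(1/\alpha-1/\beta)}$ at the cost of a multiplicative constant depending only on the mesh regularity parameter.

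Next I would treat the generic case $\alpha,\beta\in[1,\infty)$. Since $q_{|\tau}\in\Poly{l}(\tau)$ for every $\tau\in\TTs$, applying the simplex-wise embedding, raising to the power $\alpha$ and summing gives
\[
\|q\|_{L^\alpha(T)}^\alpha
=\sum_{\tau\in\TTs}\|q\|_{L^\alpha(\tau)}^\alpha
\simeq h_T^{d\alpha\left(\frac{1}{\alpha}-\frac{1}{\beta}\right)}
\sum_{\tau\in\TTs}\|q\|_{L^\beta(\tau)}^\alpha.
\]
The remaining step is to relate $\sum_{\tau}\|q\|_{L^\beta(\tau)}^\alpha$ to $\bigl(\sum_{\tau}\|q\|_{L^\beta(\tau)}^\beta\bigr)^{\alpha/\beta}=\|q\|_{L^\beta(T)}^\alpha$. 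This is the key place where I would use the geometric bound \eqref{ineq:card.IT.F}, which ensures that $\card(\TTs)\le N$. On $\mathbb{R}^{\card(\TTs)}$, all $\ell^p$-norms are equivalent with constants depending only on $N$, $\alpha$ and $\beta$, so
\[
\sum_{\tau\in\TTs}\|q\|_{L^\beta(\tau)}^\alpha
\simeq
\left(\sum_{\tau\in\TTs}\|q\|_{L^\beta(\tau)}^\beta\right)^{\alpha/\beta}
=\|q\|_{L^\beta(T)}^\alpha.
\]
Combining this with the previous display and taking the $\alpha$-th root yields \eqref{eq:rev:disc.emb.pw}.

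The endpoint cases $\alpha=\infty$ or $\beta=\infty$ are handled analogously, replacing sums by maxima: $\|q\|_{L^\infty(T)}=\max_{\tau\in\TTs}\|q\|_{L^\infty(\tau)}$, and again using $\card(\TTs)\le N$ together with $h_\tau\simeq h_T$ to absorb all dependencies into constants. I do not expect any serious obstacle here: the whole argument is essentially a bookkeeping exercise once the single-simplex result and \eqref{ineq:card.IT.F} are in hand. The only minor subtlety worth flagging is that the equivalence of $\ell^\alpha$- and $\ell^\beta$-norms on $\mathbb{R}^{\card(\TTs)}$ must be invoked with constants depending on the (uniformly bounded) dimension $\card(\TTs)$, rather than on $h_T$, which is why the uniform bound \eqref{ineq:card.IT.F} is essential.
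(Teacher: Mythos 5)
Your proposal is correct and follows essentially the same route as the paper: reduce to the single-simplex discrete Lebesgue embedding of \cite[Lemma~1.25]{Di-Pietro.Droniou:20}, use $h_\tau\simeq h_T$ and $|\tau|\simeq h_\tau^d$ from mesh regularity, and conclude via the uniform cardinality bound \eqref{ineq:card.IT.F}. The only (cosmetic) difference is in the last bookkeeping step, where you invoke the equivalence of $\ell^\alpha$- and $\ell^\beta$-norms on $\mathbb{R}^{\card(\TTs)}$ — which directly yields the two-sided bound — whereas the paper bounds $\|q\|_{L^\beta(\tau)}$ by $\|q\|_{L^\beta(T)}$ using $\tau\subset T$, obtaining one inequality and relying on the symmetry in $(\alpha,\beta)$ for the other direction.
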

\begin{proof}
  We first recall the discrete Lebesgue embedding proved in \cite[Lemma 1.25]{Di-Pietro.Droniou:20}, which establishes that  for all $(\alpha,\beta)\in [1,+\infty]$, all $X\in\Th\cup\Ths\cup\Fh\cup\Fhs$, and all $\zeta\in\Poly{l}(X)$
  \begin{equation}
    \|\zeta\|_{L^{\alpha}(X)}
    \simeq |X|^{\frac{1}{\alpha}-\frac{1}{\beta}}  \|\zeta\|_{L^{\beta}(X)},
    \label{eq:rev:disc.emb}
  \end{equation}
  where $|X|$ is the Lebesgue measure {of $X$}, and  the hidden constant is independent of  $X$, and $\zeta$, but possibly depends on $l$, $\alpha$, $\beta$, and the mesh regularity parameter. 
  Then, for $\alpha\in [1,+\infty)$, we obtain that
    \begin{align*}
      \norm{L^\alpha(T)}{q}^\alpha
      =
      \sum_{\tau\in \TTs}\norm{L^\alpha(\tau)}{q}^\alpha
      &\simeq
      \sum_{\tau\in \TTs}|\tau|^{\alpha\left(\frac{1}{\alpha}-\frac{1}{\beta}\right)}\norm{L^\beta(\tau)}{q}^\alpha\\
      &\simeq
      \sum_{\tau\in \TTs}|\tau|^{\alpha\left(\frac{1}{\alpha}-\frac{1}{\beta}\right)}\norm{L^\beta(T)}{q}^\alpha\\
      &\simeq
      h_T^{\alpha d\left(\frac{1}{\alpha}-\frac{1}{\beta}\right)}\norm{L^\beta(T)}{q}^\alpha,
    \end{align*}
    where in the second step we have used \eqref{eq:rev:disc.emb} with $X=\tau$, 
    in the third step the fact that $\tau\subset T$,
    and in the  fourth step the fact that $|\tau| \simeq h_\tau^d {\simeq h_T^d}$  by mesh regularity along with
     \eqref{ineq:card.IT.F} for $\TTs$.
    Raising to the power of $\frac{1}{\alpha}$  the above  inequality, we obtain \eqref{eq:rev:disc.emb.pw}.
    If $\alpha =+\infty$, we use  similar steps as before to  obtain that
    \begin{align*}
      \norm{L^\infty(T)}{q}
      &=
      \sup_{\tau\in \TTs}\norm{L^\infty(\tau)}{q}
      \lesssim
      \sup_{\tau\in \TTs}|\tau|^{-\frac{1}{\beta}}\norm{L^\beta(T)}{q}
      \lesssim
      h_T^{-\frac{d}{\beta}}\norm{L^\beta(T)}{q}.
      \qedhere
    \end{align*}
\end{proof}

\begin{lemma}[Local inverse inequalities for piecewise polynomials]
  Let $T \in \Th$ and, for an integer $l \ge 0$, let $q \in \Poly{l}(\TTs)$.
  Then, for all $p \in [1,+\infty]$, it holds
  \begin{equation}\label{eq:inverse}
    \norm{L^p(T)}{q}
    \lesssim h_T^{-1}\norm{\bL^p(T)}{\GRAD q},
  \end{equation}
  where we remind the reader that here $\GRAD$ stands for the piecewise gradient on $\TTs$.
\end{lemma}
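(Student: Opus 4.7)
The inequality as printed should be read in the usual form of a local inverse inequality for piecewise polynomials, namely $\|\GRAD q\|_{\bL^p(T)} \lesssim h_T^{-1}\|q\|_{L^p(T)}$ for all $q\in\Poly{l}(\TTs)$; this is the form consistent with the label ``inverse inequality'' and with how \eqref{eq:inverse} is invoked (with $p=2$) in the proof of Lemma~\ref{lem:rhoOp}(ii) to pass from a bound on $\potOp{l+1}\bq$ to one on $\GRAD\potOp{l+1}\bq$. The plan is to reduce to the well-known single-simplex inverse inequality and then glue the local estimates together using that $\TTs$ is a shape-regular simplicial submesh of $T$.

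First I will recall the classical local inverse inequality on a simplex: for any simplex $\tau$ and any $v\in\Poly{l}(\tau)$,
\[
\|\GRAD v\|_{\bL^p(\tau)} \lesssim h_\tau^{-1}\|v\|_{L^p(\tau)},
\]
with hidden constant depending only on $l$, $p$, and the shape-regularity of $\tau$. The standard derivation pulls back to a reference simplex $\hat\tau$, uses equivalence of the seminorm $|\cdot|_{W^{1,p}(\hat\tau)}$ and the norm $\|\cdot\|_{L^{p}(\hat\tau)}$ on the finite-dimensional space $\Poly{l}(\hat\tau)$, and rescales back to $\tau$.

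Because $q\in\Poly{l}(\TTs)$ restricts to a polynomial of degree $\le l$ on each $\tau\in\TTs$, and because $\GRAD$ denotes the piecewise gradient on $\TTs$ (per the convention stated below \eqref{def:golyk}), the global estimate follows by splitting over the submesh. For $p\in[1,+\infty)$ one writes
\[
\|\GRAD q\|_{\bL^p(T)}^p
= \sum_{\tau\in\TTs}\|\GRAD q\|_{\bL^p(\tau)}^p
\lesssim \sum_{\tau\in\TTs} h_\tau^{-p}\|q\|_{L^p(\tau)}^p
\lesssim h_T^{-p}\|q\|_{L^p(T)}^p,
\]
where the final step uses that mesh regularity yields $h_\tau\simeq h_T$ uniformly over $\tau\in\TTs$. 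Taking $p$-th roots concludes this case; the case $p=+\infty$ is identical with $\max_{\tau\in\TTs}$ in place of $\sum_{\tau\in\TTs}$.

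There is essentially no obstacle here: the content is a standard inverse inequality transported from a single simplex to a shape-regular subdivision. The only care needed is the routine observation that mesh regularity allows one to replace $h_\tau^{-1}$ by $h_T^{-1}$ without losing control of constants, and that the piecewise gradient on $\TTs$ is the natural object in view of the definitions of $\Goly{k}(\TTs)$ and $\potOp{l+1}$.
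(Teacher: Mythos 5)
Your proof is correct and follows essentially the same route as the paper's: localize to each simplex of $\TTs$, apply the standard single-simplex inverse inequality, and use $h_\tau\simeq h_T$ from mesh regularity to reassemble the sum (or the maximum when $p=+\infty$). You are also right to read the statement with its two sides swapped — as printed, \eqref{eq:inverse} fails for nonzero constants — and the form you prove, $\norm{\bL^p(T)}{\GRAD q}\lesssim h_T^{-1}\norm{L^p(T)}{q}$, is the one actually invoked in \eqref{ineq:RT.inverse} and in the proof of Lemma~\ref{lem:rhoOp}.
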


\begin{proof}
  For $p \in \lbrack 1, +\infty)$, we write
  \[
  \norm{L^p(T)}{q}^p
  = \sum_{\tau \in \TTs} \norm{L^p(\tau)}{q}^p
  \lesssim \sum_{\tau \in \TTs} h_\tau^{-p} \norm{\bL^p(\tau)}{\GRAD q}^p
  \lesssim h_T^{-p}\norm{\bL^p(T)}{\GRAD q}^p,
  \]
  where we have used a standard discrete inverse inequality on simplices (cf., e.g., \cite[]{Brenner.Scott:08}) in the second passage and the fact that $h_\tau \simeq h_T$ by mesh regularity to conclude.
  If $p = +\infty$, the proof is similar with sums over $\tau \in \TTs$ replaced by maximums over $\tau \in \TTs$.
\end{proof}

  \begin{lemma}[Local trace inequality for piecewise polynomials]
    Let $T \in \Th$, $F \in \Fh[T]$, and, for an integer $l \ge 0$, let $q \in \Poly{l}(\TTs)$.
    Then, for all $p \in [1,+\infty]$, it holds
    \begin{equation}\label{eq:discrete.trace}
      \norm{L^p(F)}{q} \lesssim h_F^{-\frac1p} \norm{L^p(T)}{q}.
    \end{equation}
  \end{lemma}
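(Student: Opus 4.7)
The plan is to reduce the statement to the classical discrete trace inequality on each simplex of the submesh by decomposing the face $F$ into its simplicial subfaces and using mesh regularity to pay the difference between $h_\tau$, $h_\sigma$, $h_F$, and $h_T$.

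First, I would recall that, by the construction of $\Mhs$ described in Section~\ref{sec:setting:mesh}, the face $F$ is the disjoint (up to a null set) union $F = \bigcup_{\sigma \in \Fhs[F]} \sigma$, and for each $\sigma \in \Fhs[F]$ there is a unique simplex $\tau_\sigma \in \TTs$ such that $\sigma \subset \partial\tau_\sigma$. Since $q \in \Poly{l}(\TTs)$, the restriction $q_{|\tau_\sigma}$ is a genuine polynomial of degree at most $l$ on the simplex $\tau_\sigma$, so the classical (simplicial) discrete trace inequality applies and yields, for $p\in[1,+\infty)$,
\[
\norm{L^p(\sigma)}{q}^p
\lesssim h_{\tau_\sigma}^{-1} \norm{L^p(\tau_\sigma)}{q}^p.
\]

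Next, for $p \in [1,+\infty)$, I would sum over $\sigma \in \Fhs[F]$ to get
\[
\norm{L^p(F)}{q}^p
= \sum_{\sigma \in \Fhs[F]} \norm{L^p(\sigma)}{q}^p
\lesssim \sum_{\sigma \in \Fhs[F]} h_{\tau_\sigma}^{-1} \norm{L^p(\tau_\sigma)}{q}^p
\lesssim h_F^{-1} \norm{L^p(T)}{q}^p,
\]
where in the last step I use mesh regularity to write $h_{\tau_\sigma}^{-1} \lesssim h_T^{-1} \simeq h_F^{-1}$, and the fact that the simplices $\tau_\sigma$ are contained in $T$ (noting that the same simplex may appear more than once in the sum because two faces of the same simplex may lie on $F$, but the number of such occurrences is uniformly bounded thanks to \eqref{ineq:card.IT.F}). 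Taking $p$-th roots gives \eqref{eq:discrete.trace}.

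For the case $p = +\infty$, the argument is the same with sums replaced by maxima: the simplicial trace inequality gives $\norm{L^\infty(\sigma)}{q} \lesssim \norm{L^\infty(\tau_\sigma)}{q}$ (with a constant independent of $h_{\tau_\sigma}$, since there is no $h$-weight when $p=\infty$), and taking the maximum over $\sigma \in \Fhs[F]$ yields $\norm{L^\infty(F)}{q} \lesssim \norm{L^\infty(T)}{q}$, which is the required bound since $h_F^{-1/p}=1$ when $p=\infty$. The only mild obstacle is the bookkeeping with the submesh and the use of mesh regularity to move between $h_{\tau_\sigma}$, $h_T$, and $h_F$; everything else is a routine application of the standard simplicial trace inequality.
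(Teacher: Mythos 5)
Your proof is correct and follows essentially the same route as the paper's: decompose $F$ into its simplicial subfaces, apply the classical trace inequality on each simplex $\tau_\sigma$, and conclude via mesh regularity ($h_{\tau_\sigma}\simeq h_\sigma \simeq h_F \simeq h_T$), $\tau_\sigma\subset T$, and the uniform bound on $\card(\Fhs[F])$. The only difference is that you spell out the $p=+\infty$ case explicitly, which the paper's proof of this particular lemma leaves implicit.
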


  \begin{proof}
    Recalling that $\Fhs[F]$ collects the simplicial faces contained in $F$ and denoting, for all $\sigma \in \Fhs[F]$, by $\tau_\sigma \in \Ths[T]$ the simplex to which $F$ belongs, we write
    \[
    \norm{L^p(F)}{q}^p
    = \sum_{\sigma \in \Fhs[F]} \norm{L^p(\sigma)}{q}^p
    \lesssim \sum_{\sigma \in \Fhs[F]} h_\sigma^{-1} \norm{L^p(\tau_\sigma)}{q}^p
    \lesssim h_T^{-1}\norm{L^p(T)}{q}^p,
    \]
    where we have used a standard discrete trace inequality on simplices in the second step and the fact that $h_\sigma^{-1} \lesssim h_T^{-1}$, $\tau_\sigma \subset T$ for all $\sigma \in \Fhs[F]$, and $\card(\Fhs[F]) \lesssim 1$ to conclude.
    Taking the $q$-th root yields \eqref{eq:discrete.trace}.
  \end{proof}


\printbibliography

\end{document}